       \let\le=\leqslant  \let\leq=\leqslant
       \let\ge=\geqslant  \let\geq=\geqslant
\newsavebox{\astrutbox}
\sbox{\astrutbox}{\rule[-5pt]{0pt}{20pt}}
\newcommand{\x}{{\bf x}}
\newcommand{\vx}{\vec x}
\newcommand{\X}{{\bf X}}
\newcommand{\bfn}{{\bf n}}
\newcommand{\ud}{\mathrm{d}}
\newcommand{\pe}{p^\epsilon}
\newcommand{\phie}{\phi^\epsilon}
\newcommand{\Ee}{E_N^\epsilon}
\newcommand{\Eed}{E_2^\epsilon}
\newcommand{\EN}{\mathcal E_N^\epsilon}
\newcommand{\Pout}{P_\text{out}}
\newcommand\etal{\mbox{\textit{et al. }}}
\newcommand\eg{e.g.\ }
\newtheorem{theorem}{Theorem}[section]
\newdefinition{definition}[theorem]{Definition}
\newdefinition{example}[theorem]{Example}
\newdefinition{remark}[theorem]{Remark}
\newdefinition{assum}[theorem]{Assumptions}
\newtheorem{lemma}{Lemma}
\newtheorem{corollary}{Corollary}
\newtheorem{prop}{Proposition}
\title[Coarse graining of a Fokker--Planck equation]{Coarse graining of a Fokker--Planck equation with excluded volume effects preserving the gradient-flow structure}
\author[M. Bruna et al.]{%
  M.\ns B\ls R\ls U\ls N\ls A$\,^1$,\ns
  M.\ns B\ls U\ls R\ls G\ls E\ls R$\,^2$\ns
\and
  J.\ns A.\ns C\ls A\ls R\ls R\ls I\ls L\ls L\ls O$\,^3$
}
\affiliation{%
  $^1\,$Department of Applied Mathematics and Theoretical Physics, University of Cambridge, Cambridge CB3 0WA, UK \textup{(\texttt{bruna@maths.cam.ac.uk})}\\
  $^2\,$Friedrich-Alexander-Universit\"at Erlangen-N\"urnberg Department Mathematik, Cauerstrasse 11, 91058 Erlangen, Germany \textup{(\texttt{martin.burger@fau.de})}\\
  $^3\,$Mathematical Institute, University of Oxford, Oxford OX2 6GG, UK \textup{(\texttt{carrillo@maths.ox.ac.uk})} }
\date{\today}
\begin{document}

\maketitle

\begin{abstract}
The propagation of gradient flow structures from microscopic to macroscopic models is a topic of high current interest. In this paper we discuss this propagation in a model for the diffusion of particles interacting via hard-core exclusion or short-range repulsive potentials. We formulate the microscopic model as a high-dimensional gradient flow in the Wasserstein metric for an appropriate  free-energy functional. Then we use the JKO approach to identify the asymptotics of the metric and the  free-energy functional beyond the lowest order for single particle densities in the limit of small particle volumes by matched asymptotic expansions. While we use a propagation of chaos assumption at far distances, we consider correlations at small distance in the expansion. In this way we obtain a clear picture of the emergence of a macroscopic gradient structure incorporating corrections in the free energy functional due to the volume exclusion.
\end{abstract}

\noindent{\bf AMS Subject Classification Numbers:} 35Q84, 60J70, 35K55.

\

\begin{keywords}
Fokker--Planck equations; Variational methods; Asymptotic expansions; Volume Exclusion. 
\end{keywords}



\section{Introduction} \label{sec:intro}

An interesting feature of many partial differential equations (PDEs) describing dissipative mechanisms in particle systems is that they can be seen as gradient flows (or steepest descents) of an associated  free-energy functional. This is the case of the linear Fokker--Planck equation \cite{Jordan:1998wa}, which describes the evolution of the probability of one or many Brownian independent particles, and many other nonlinear Fokker--Planck equations including nonlinear diffusions and McKean--Vlasov like equations \cite{AGS,CMV2003,Otto,santambrogio2015optimal,villani2003topics}. For example, if we consider $N$ Brownian particles moving under an external potential $V(\x)$, their evolution can be described by the following stochastic differential equation (SDE):
\begin{equation} 
\label{hsde}
\ud \X_i(t) =  \sqrt{2} \, \ud{\bf W}_i(t) - \nabla V_\x ( \X_i (t) ) \ud t, \qquad 1 \le i \le N,
\end{equation}
where ${\bf W}_i(t)$ are independent Brownian motions. 
The set of $N$ particles can equivalently be described by a Fokker--Planck PDE for its joint probability density $P ({\vx}, t)$, where $\vx = (\x_1, \dots, \x_N)$:
\begin{align}
\label{fp_eq}
\frac{\partial P}{\partial t} (\vx, t) &= \nabla_{\vx} \cdot \left [  \nabla_{\vx} P + \nabla_{\vx} V_N (\vx) P \right ],
\end{align}
and $\nabla _{\vx}$ and $\nabla _{\vx} \, \cdot$ respectively stand for the gradient and divergence operators with respect to the $N$-particle position vector $\vx$  and $V_N (\vx) = \sum_{i=1}^N V(\x_i)$. The Fokker--Planck equation \eqref{fp_eq} can be seen as a gradient flow 
\begin{equation*}
\frac{\partial P}{\partial t} (\vx, t)  =  \nabla_{\vx} \cdot \left (P  \nabla_{\vx} \frac{\delta \mathcal E_N }{\delta P} \right),
\end{equation*}
with respect to the Wasserstein metric and the free energy
\begin{equation}
\label{entropy}
\mathcal E_N(P)  = \int  \left[ P(\vx, t) \log P(\vx,t) + V_N (\vx) P  \right] \, \ud \vx.
\end{equation}

The connections between \eqref{hsde}, \eqref{fp_eq} and \eqref{entropy} are well understood in the case of noninteracting particles, where essentially the macroscopic limit of a set of $N$ particles coincides with the case of a single Brownian particle \cite{Jordan:1998wa}, leading to the linear Fokker--Planck equation for the one-particle probability density $p(\x,t)$
\begin{align}
\label{fp_one_eq}
\frac{\partial p}{\partial t} (\x, t) &= \nabla_{\x} \cdot \left [   \nabla_{\x} p + \nabla_{\x}  V ({\x})p \right ], 
\end{align}
In this paper we are interested in the connections between these objects when we consider interacting Brownian particles. 

Having interactions makes the coarse-graining procedure, going from $N$ particles to one, highly non-trivial. In particular, the Fokker--Planck equation for the one-particle marginal density $p(\x,t) = \int P(\vx,t) \delta(\x-\x_1) \ud \vx$ becomes in general coupled to higher-order marginals, leading to a BBGKY-type hierarchy, and its relation to the $N$-particle probability density becomes much more complicated due to correlations between particles. The particular assumptions on the interactions are crucial in order to derive the evolution of the one-particle probability density. 

Consider a set of $N$ pairwise interacting particles in a bounded domain $\Omega \subset \mathbb R^d$ with an interaction potential $u$:
\begin{equation} 
\label{ssde0}
\ud \X_i(t) =  \sqrt{2} \, \ud{\bf W}_i(t) - \nabla_\x V ( \X_i (t) ) \ud t  - \chi \sum_{j \ne i}  \nabla_\x u((\X_i(t) - \X_j(t))/\ell) \ud t,
\end{equation}
for $1\le i \le N$, where $\chi$ and $\ell$ represent the strength and the range of the potential $u$, respectively. Depending on $\chi$ and $\ell$, one expects different limit equations \cite{Bodnar:2005kv}. When the interactions are long range ($\ell \sim 1$), then one particle interacts on average with an order $N$ particles. The mean-field approximation $P(\vx,t) = \prod_{i = 1}^N p(\x_i,t)$ leads to the nonlinear McKean--Vlasov equation 
\begin{equation} 
\label{fp_mfa0}
\frac{\partial p}{\partial t} (\x,t) = \nabla_{{\bf  x}}  \cdot \left[ \nabla_{{\bf  x}} \,  p  + \nabla_{\x} V(\x) p  +  \chi (N-1) p \! \int_{\Omega} p( {\bf y}, t) \nabla_{\x} u(\x - {\bf y})   \,  \ud {\bf y}   \right],
 \end{equation}
for $\x, {\bf y} \in \Omega$. The approximation can be made rigorous taking the so-called mean-field scaling $\chi = 1/N$ and taking the  limit $N\to \infty$ so that the $N$-dependence in \eqref{fp_mfa0} drops out.
Rigorous proofs of the mean-field limit \eqref{fp_mfa0} typically require the potential $u$ to be Lipschitz or other less restrictive assumptions, see 
\cite{BCC,CDP,Jabin:2017fb,oelschlager1984martingale,Szn} and references therein, and only recently singular potentials such as the Coulomb or Newtonian potential have been included \cite{MR4018082,MR3858403}. The mean-field limit relies on each particle interacting on average with all the other particles, and it is therefore, not suitable for repulsive short-range interactions ($\ell \ll 1$), where typically a particle only interacts with close neighbours but when it does, the interaction is strong. In this paper, we are interested in the regime $\ell = \epsilon \ll 1$, $\chi = 1$ and $N \epsilon^d \ll 1$. 
 Using the method of matched asymptotic expansions in this limit one obtains a nonlinear correction term in the development for small $\epsilon$ and $N$ fixed, leading to the nonlinear Fokker--Planck equation \cite{Bruna:2017vr}
 \begin{align}
\label{fp_one_eq0}
\frac{\partial p}{\partial t} (\x, t) &= \nabla_{\x} \cdot \left \{  \left[1 + \alpha_u (N-1) \epsilon^d p \right] \nabla_{\x} p + \nabla_{\x}  V ({\x})p \right \}, 
\end{align}
where
\begin{equation} \label{alpha_u0}
\alpha_u =  \int_{\mathbb R^d}  \left ( 1 - e^{- u( \x )} \right)  \ud  \x,
\end{equation}
is independent of $\epsilon$. 
Here $\pe$ is used to indicate the asymptotic approximation of $p$ for small $\epsilon$. Thus, we see that interactions introduce a nonlinear diffusion term into the macroscopic Fokker--Planck \eqref{fp_one_eq}. Interestingly though, \eqref{fp_one_eq0} preserves the gradient-flow structure of the original microscopic Fokker--Planck \eqref{fp_eq}, with the following  free-energy 
\begin{equation}
\label{entropy_1}
\Ee (p) = \int_{\Omega} \left[p \log p + \frac{1}{2}\alpha_u (N-1) \epsilon^d p^2 + V({\x_1})p \right]  \, \ud {\x_1},
\end{equation}
see for instance \cite{CMV2003,Otto,villani2003topics}. The same applies when, instead of soft interactions, one considers hard core interactions between particles (hard spheres of diameter $\epsilon$). In that case, interactions do not appear in \eqref{ssde0} but as boundary conditions on a perforated domain, that is, $\| \X_i (t)- \X_j(t)\| = \epsilon$ and the coefficient is $\alpha_u = V_d(1)$, the volume of the unit ball in $\mathbb R^d$. Note that both the microscopic and the macroscopic densities $P$ and $p$ depend on the small parameter $\epsilon$. We will not make this dependency explicit for notational simplicity. 
We discuss this further in Subsection \ref{sec:hs}.
Other scaling limits are possible \cite{Bodnar:2005kv,BCM,Oelschlager:1985kv} and they will be discussed in Section \ref{sec:variational} below.

The aim of this work is to study what happens to the gradient flow structure and free-energy of a particle-based model when coarse-graining. In the examples above it has been shown by mean field or matched asymptotic expansions directly on the PDE level that the macroscopic model preserves the structure, and that interactions appeared as a quadratic term in the  free-energy. But, can we recover this information from the variational viewpoint during the coarse graining procedure? Understanding this point would be potentially useful in the description of generalized models \eqref{hsde} for non-identical particles, for which the macroscopic model is a cross-diffusion PDE system \cite{Bruna:2012wu} without a gradient flow structure, at least not in general and in the standard sense \cite{Bruna:2016cm}. 

The idea of coarse-graining at the level of the variational Fokker--Planck equation or the free-energy has already been considered for other systems, such as discrete simple-exclusion processes and mean-field interactions, using the theory of large deviations or Gamma-convergence, see for example \cite{Adams:2013gi, Adams:2011kh,Berman:2019jw,Fathi:2016gl, Kaiser:2018ti}.
This also embeds into a more abstract setting of evolutionary convergence of gradient flows, see \cite{arnrich2012passing,mielke2016evolutionary,mielke2019coarse,sandier2004gamma,serfaty2011gamma}.
All these papers are working on the lowest-order limit, while we seek to derive a first-order expansion in terms of the small volume of particles (note that the lowest order in our case is a linear Fokker--Planck equation for single particles that can be obtained easily).
Here we propose to work with the variational scheme, also called the JKO scheme, leading to the Fokker--Planck equation \cite{Jordan:1998wa} and use the method of matched asymptotics at that level to obtain a macroscopic variational scheme, thus ensuring the preservation of the gradient-flow structure. In this paper we focus on the simple example with identical interacting particles, for which we have a gradient flow structure at the macroscopic level as discussed above.

The paper is organized as follows. In Section \ref{sec:outline} we introduce the key definitions and give an outline of the main results.
We first introduce the variational formulation of the steepest descent at the macroscopic and microscopic levels together with their optimality conditions in Section \ref{sec:variational}. We discuss some aspects such as uniqueness of the variational formulations and their linearisation. Section \ref{sec:onespecies} is the core of this work devoted to the strategy of matched asymptotic expansions at the level of the optimality conditions for the variational schemes. This is all done in the case of soft particles while a final subsection deals with the hard-sphere case. Section \ref{sec:numerics} illustrates these results with numerical experiments emphasizing the free energy comparisons between microscopic and macroscopic simulations. We conclude with a discussion of our results and future work in Section \ref{sec:conclusion}.

\section{Outline of the results} \label{sec:outline}

We consider $N$ identical particles evolving according to \eqref{ssde0} with $\chi = 1$ and $\ell = \epsilon \ll 1$ in a bounded domain $\Omega \subset \mathbb R^d$ with $|\Omega| = 1$ and no-flux boundary conditions on $\partial \Omega$. The main result of this work is obtained for strong repulsive short-ranged interactions. We shall indeed assume throughout this text the following properties for $u$ (a \emph{short-range potential}).

\begin{assum}[Short-range potential] \label{assum_u} The potential $u : \mathbb R ^d \to \mathbb R$ is a radial, nonnegative function whose gradient is locally Lipschitz outside the origin. Moreover, it is assumed that $u$ is unbounded near zero, goes to zero at $\infty$ with bounded derivatives and $u = O(r^{-(d+\delta)})$ for some $\delta>0$ as $r\to \infty$. 	Note that in particular $u$ may not be integrable. 
\end{assum}

\begin{assum}[Asymptotic regime] \label{assum_regime} We assume that the range of the potential $\epsilon$ and the volume fraction $\phi$ are small parameters, $\epsilon \ll 1$ and  $ \phi = N \epsilon^d  \ll 1$. The number of particles $N$ can either be fixed and finite, or consider the case $N \to \infty$ at the appropriate rate as $\epsilon \to 0$ so that $\phi$ remains finite.
\end{assum}

\begin{remark}
	We note that our regime corresponds to a higher density than in the Boltzmann scaling of $\epsilon \to 0$ and $N\to \infty$ with $N \epsilon^{d-1} = k$ finite. The volume fraction in the Boltzmann limit goes to zero, so the first correction we are calculating would vanish in that regime.
\end{remark}

The joint probability density $P(\vx,t)$, $\vx = (\x_1, \dots, \x_N)$, of $N$ particles evolving according to \eqref{ssde0} satisfies the following problem
\begin{subequations}
\label{fps}
\begin{alignat}{3}
\label{fps_eq}
\frac{\partial P}{\partial t} &= \nabla_{\vx} \cdot \left [  \nabla_{\vx} P + \nabla_{\vx} V_N (\vx) P + \nabla_{\vx} U_N^\epsilon (\vx) P \right ], & \qquad &\vx \in \Omega^N, t>0,\\
0 & =  \vec n \cdot \left [  \nabla_{\vx} P + \nabla_{\vx} V_N (\vx) P + \nabla_{\vx} U_N^\epsilon (\vx ) P \right ], &  &\vx \in \partial \Omega^N, t>0,
\end{alignat}
where $\vec n$ is the outward normal vector on $\partial \Omega^N$, $V_N(\vx) = \sum_{i=1}^N V(\x_i)$ as before, and $U_N$ is the total interaction potential of the system
\begin{equation}
\label{interact_pot}
U_N^\epsilon(\vx) = \sum_{i=1}^N \sum_{j>i}^N u\left(\frac{ \x_i - \x_j}{\epsilon}  \right).
\end{equation}
The corresponding  free-energy is given by
\begin{equation}
\label{entropys}
\EN(P)  = \int_{\Omega^N}  \left[ P(\vx, t) \log P(\vx,t) + V_N (\vx) P  + U_N^\epsilon(\vx) P \right] \, \ud \vx.
\end{equation}
\end{subequations}

\begin{assum}[Initial conditions] \label{assum_initial}  
Throughout this work we will consider the initial positions $\X_i(0)$ to be random, indistinguishable, and identically distributed, with
\[
\text{Law}\big(\X_1(0), \X_2(0), \dots, \X_N(0)\big) = P_0(\vx).
\]
This implies that $P_0$ is invariant to permutations of the particle labels. 
Moreover, we assume their initial law to behave at small distances ($\|\X_i-\X_j\| \sim \epsilon$) such that \eqref{fps_eq} has sufficiently regular solutions and to behave like a chaotic ensemble at larger distances  ($\|\X_i-\X_j\| \gg \epsilon$). We will be more precise in the Subsection \ref{sec:mae_soft} (see Assumptions \ref{assum_initial_refined}). 
\end{assum}

As we will recall in the next section, there exists a natural variational scheme associated to the Fokker--Planck equation \eqref{fps}, consisting of the following time-discrete approximation \cite{Jordan:1998wa,Otto}. 
Let $\bar P_{k}(\vx)$ be the approximated $N$-particle probability density at time $t = k\Delta t$. Given $\bar P_{k-1}$, then we define $\bar P_{k}$ as any solution of the variational problem 
\begin{subequations}
\label{variationalNa}
\begin{equation} \label{variationalN_1}
\underset{P_k}{\text{inf}} \underset{(P, \vec U)}{\text{inf}} \left \{ \frac{1}{2}  \int_0 ^{ \Delta t} \int_{\Omega^N} P \| \vec U \|^2 \ud \vx \ud s + \EN(P_k) \right \},
\end{equation}
where the infimum is taken among all the pairs $(P, \vec U)$ and `final position' $P_k = P(\cdot, \Delta t)$ such that $P_k \in \mathcal P_2(\Omega^N)$, $P: [0, \Delta t] \to \mathcal P_2(\Omega^N)$ with
\begin{alignat}{3}
\label{variationalN_2}
	\frac{\partial P}{\partial s} + \nabla_{\vx} \cdot (P \vec U) &= 0,  &\qquad & \text{in } \Omega^N \times (0,\Delta t),\\ 
\vec U \cdot \vec n &= 0, & & \text{on }\partial \Omega^N \times (0,\Delta t),\\
P &= \bar P_{k-1}(\vx), &&  \text{in } \Omega^N \times \{ 0\}, \\ 
P_k(\vx) & = P, &&  \text{in } \Omega^N \times \{ \Delta t \}.
\end{alignat}
The scheme is initialized for $k = 0$ with $\bar P_0 = P_0$ as given in Assumptions \ref{assum_initial}.
\end{subequations}

Starting from the $N$-particle problem \eqref{variationalNa}, we derive an analogous problem for the one-particle density and the associated flow.
In general there might be a uniqueness issue in the determination of the flow, which is related to the problem of tilting gradient flows (cf. \cite{Mielke:2020ut}). However, we have a natural convention in our case, since we can enforce consistency with an equation of non-interacting particles, that is, the variational formulation of the Fokker-Planck equation (cf. \cite{Jordan:1998wa}). Following the wide spread rationale that the Wasserstein metric is the right one for this equation (and it is also tilt invariant for changing external potentials) will fix the metric structure and entropy also at higher order as we shall see later in the proof.

\begin{assum}[Flow variable] \label{assum_flow} 
The macroscopic flow $\phi$ is defined such that, in the absence of interactions ($u = 0$), it is consistent with the fluid-dynamic formulation of the Wasserstein metric. 
\end{assum}

\begin{prop} \label{prop:N2}
Consider a short range repulsive interaction potential $u$ satisfying Assumptions \ref{assum_u}, and the problem \eqref{variationalNa} with $N=2$ and initial data satisfying Assumption \ref{assum_initial} in the asymptotic regime defined by Assumption \ref{assum_regime} with the consistency relation in Assumption \ref{assum_flow}. 
Then the one-particle marginal density $p$ of the optimality conditions of \eqref{variationalN_1}  and associated flow $ \phi$  satisfies the following equations up to order $\epsilon^d$:
\begin{alignat*}{3}
\frac{\partial p}{\partial s} + \nabla_{\x_1} \cdot (p \nabla_{\x_1} \phi) &= 0, &  \qquad & \text{in } \Omega \times (0,\Delta t),\\
\frac{\partial \phi}{\partial s} + \frac{1}{2} \| \nabla_{\x_1} \phi \| ^2 &= 0, & & \text{in } \Omega \times (0,\Delta t), \\ 
\nabla_{\x_1} \phi \cdot \bfn_1 &= 0, & & \text{on } \partial \Omega \times (0,\Delta t),\\
p &=  \bar p_{k-1}(\x_1), & & \text{in } \Omega \times \{ 0 \},\\
\phi &=  - \left( \log   \bar p_k + V +  \alpha_u   \epsilon^d   \bar p_k \right) = -\frac{\delta \Eed(\bar p_k)}{\delta p_k}, & & \text{in } \Omega \times \{ \Delta t \}.
\end{alignat*}
where $\Eed$ is given in \eqref{entropy_1}, $\alpha_u$ in \eqref{alpha_u0}, and $\bar p_{k}$ is the approximated one-particle marginal $p$ at time $t = k \Delta t$. 
Furthermore, $\bar P_k(\x_1,\x_2)$ for  $k>0$ inherits the structure of $k=0$ in Assumption \ref{assum_initial} of being chaotic for $\| \x_1 - \x_2\| \gg \epsilon$, that is the leading order term of $\bar P_k(\x_1,\x_2)$ is $\bar p_k(\x_1) \bar p_k(\x_2)$.
\end{prop}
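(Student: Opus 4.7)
The plan is to start from the dynamic (Benamou--Brenier) form of the JKO step \eqref{variationalNa} with $N=2$, derive its Euler--Lagrange system, and then perform matched asymptotic expansions on the resulting two-particle system in the small parameter $\epsilon$. Introducing a Lagrange multiplier $\Phi(\vx,s)$ for the continuity constraint \eqref{variationalN_2}, any minimizer must satisfy $\vec U=\nabla_{\vx}\Phi$, the Hamilton--Jacobi equation
\begin{equation*}
\partial_s \Phi + \tfrac{1}{2}\|\nabla_{\vx}\Phi\|^2 = 0 \quad \text{in } \Omega^2\times(0,\Delta t),
\end{equation*}
the no-flux condition $\nabla_{\vx}\Phi\cdot\vec n=0$ on $\partial\Omega^2$, and the terminal condition $\Phi(\cdot,\Delta t)=-\delta\EN/\delta P$ evaluated at the optimal $\bar P_k$. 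For $N=2$ this terminal datum is, up to an additive constant, $-(\log\bar P_k + V(\x_1)+V(\x_2)+u((\x_1-\x_2)/\epsilon))$, and by Assumptions \ref{assum_u} and \ref{assum_initial} the interaction term is only active on a set of $\x_2$-volume $O(\epsilon^d)$ around $\x_1$.

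Next I would perform matched asymptotics on the coupled system $(\bar P_k,\Phi)$ along the lines of \cite{Bruna:2017vr} but now including the HJ equation. The outer expansion, valid for $r=\|\x_1-\x_2\|\gg\epsilon$, uses the chaos ansatz $\bar P_k(\x_1,\x_2)=\bar p_k(\x_1)\bar p_k(\x_2)+\epsilon^d \bar P_k^{(1)}+\cdots$ and an additive phase $\Phi=\phi(\x_1,s)+\phi(\x_2,s)+\epsilon^d\Phi^{(1)}+\cdots$; to leading order the continuity and HJ equations decouple into two copies of the noninteracting one-particle system, fixing $\phi$ as the one-particle flow prescribed by Assumption \ref{assum_flow}. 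The inner expansion uses the stretched coordinate $\vec r=(\x_1-\x_2)/\epsilon$ with $\x_1$ as slow variable; at leading order the inner terminal condition is dominated by $u(\vec r)$, which forces the contact profile $\bar P_k\sim \bar p_k(\x_1)^2 e^{-u(\vec r)}$. Matching in the overlap $\epsilon\ll r\ll 1$, together with $\int_{\mathbb R^d}(1-e^{-u})\,\ud\vec r=\alpha_u$ guaranteed by Assumption \ref{assum_u}, then produces the $O(\epsilon^d)$ correction to the one-particle terminal condition.

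I would then substitute the matched expansion back into the two-particle optimality conditions and integrate against $\ud\x_2$ to close the one-particle system. The outer and inner contributions combine into a one-particle continuity equation for $p(\x_1,s)$, a Hamilton--Jacobi equation for $\phi$ on $\Omega$, the no-flux condition on $\partial\Omega$, and the terminal value $\phi=-(\log\bar p_k+V+\alpha_u\epsilon^d\bar p_k)=-\delta\Eed(\bar p_k)/\delta p$, exactly as claimed. Propagation of the chaos structure for $k>0$ then follows by a one-step induction on the JKO iteration: since the effective terminal datum at each step is a sum of one-particle functionals plus an $O(\epsilon^d)$ contact contribution localised in the inner region, the outer part of the next minimizer retains the factorized form $\bar p_{k+1}(\x_1)\bar p_{k+1}(\x_2)$ at leading order.

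The main obstacle I anticipate lies in the inner--outer matching of the phase $\Phi$ rather than of the density. Because $\Phi$ is defined only up to additive constants along characteristics, one must verify that the leading inner correction is compatible with the purely additive outer split $\phi(\x_1)+\phi(\x_2)$, and that after integration against $\ud\x_2$ the collision contribution can be written as the gradient of a one-particle potential with prefactor exactly $\alpha_u\epsilon^d$. This is precisely the step at which Assumption \ref{assum_flow} pins down the first-order correction and ensures that the macroscopic limit inherits the Wasserstein gradient-flow structure with free energy $\Eed$; without it the inner Kantorovich potential would only be determined up to a solenoidal correction.
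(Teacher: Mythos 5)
Your overall strategy---Benamou--Brenier optimality conditions for the $N=2$ JKO step, matched asymptotics with a chaos ansatz outside the collision region and a contact profile $e^{-u}$ inside, integration in $\x_2$ to close the one-particle system, and induction over JKO steps for the propagation of chaos---is the same as the paper's proof. However, as written the outline would not yield the claimed terminal condition, for two reasons.

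First, you identify the factor in the outer chaos ansatz and in the inner contact profile with the one-particle marginal $\bar p_k$. This conflation fails precisely at the order $\epsilon^d$ you need: if $\bar P_k \approx \bar p_k(\x_1)\bar p_k(\x_2)$ on the outer region with $\int_\Omega \bar p_k = 1$, then integrating already gives $\int_\Omega \bar P_k\,\ud\x_2 \approx \bar p_k(\x_1)$ from the outer alone, leaving no budget for the $O(\epsilon^d)$ inner contribution. The paper instead writes the outer solution as $q(\x_1)q(\x_2)$ with an auxiliary density $q$ whose mass is \emph{not} one, and proves Lemma~\ref{lem:rel_pq}: $p = q\bigl[\int_\Omega q - \alpha_u\epsilon^d q\bigr] + o(\epsilon^d)$ with $\int_\Omega q = 1 + a\epsilon^d + o(\epsilon^d)$. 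It is exactly this mass compensation, combined with $\varphi(\cdot,\Delta t) = -\log q_k - V$, that produces the macroscopic terminal datum $\phi(\cdot,\Delta t) = -(\log \bar p_k + V + \alpha_u\epsilon^d \bar p_k)$; without distinguishing $q$ from $p$, the $\alpha_u\epsilon^d$ coefficient cannot be tracked correctly.

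Second, the resolution of the ambiguity in the macroscopic pair is not quite what you describe. Assumption~\ref{assum_flow} fixes only the leading order $m^{(0)} = p$, $\phi^{(0)} = -\log p - V$. At $O(\epsilon^d)$ the relation $m^{(0)}\nabla\phi^{(1)} + m^{(1)}\nabla\phi^{(0)} = -\alpha_u p\nabla p$ remains underdetermined and admits the one-parameter family $\phi^{(1)} = F(\log p + V) - \alpha_u p$, $m^{(1)} = pF'(\log p + V)$; the paper selects $F = 0$ by the additional requirement that the mobility be independent of $V$. This is a freedom in splitting the integrated flux $\mathcal I = \int_\Omega P_2\nabla_{\x_1}\Phi_2\,\ud\x_2 = p\nabla\varphi + O(\epsilon^l)$ into a mobility and a gradient, not a solenoidal gauge freedom in the microscopic Kantorovich potential $\Phi$.
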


\begin{remark}[Validity of the asymptotic expansion]
The result in Proposition \ref{prop:N2} obtains the formal asymptotic expansion of the one-particle pair $(p, \phi)$ up to order $\epsilon^d$ assuming smoothness of its terms. The validity of the asymptotic expansion, that is, showing that the rest of terms are of lower order, is an open problem. 
\end{remark}

\begin{corollary}[Hard sphere particles] \label{cor:hs}
	The result stated in Proposition \ref{prop:N2} is also valid for the hard-sphere potential $u(r/\epsilon)$ with $u (r)= + \infty$ for $r< 1$, $u(r) = 0$ otherwise. This corresponds to hard sphere particles with diameter $\epsilon$. The coefficient in the final time condition is $\alpha_u = V_d(1)$, the volume of the unit ball.
\end{corollary}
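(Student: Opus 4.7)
The plan is to derive the corollary from Proposition~\ref{prop:N2} by a regularization argument, since the hard-sphere potential itself does not satisfy Assumption~\ref{assum_u} (it is not locally Lipschitz and the barrier is infinite). I would approximate the hard-sphere interaction by a family of smooth, radial, strongly repulsive short-range potentials that do satisfy Assumption~\ref{assum_u}, apply Proposition~\ref{prop:N2} in that regularized setting, and then pass to the limit in both the free-energy functional and the coefficient $\alpha_u$.

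Concretely, I would choose a family $\{u_\eta\}_{\eta>0}$ of smooth, nonnegative radial potentials with the required algebraic decay at infinity, such that $u_\eta(r)\to+\infty$ monotonically on $\{|r|<1\}$ and $u_\eta(r)\equiv 0$ on $\{|r|\ge 1\}$, so that the Boltzmann weight $e^{-u_\eta(r)}$ tends pointwise and monotonically to $\mathbf{1}_{\{|r|>1\}}$. For each $\eta>0$, Proposition~\ref{prop:N2} gives the one-particle pair $(p_\eta,\phi_\eta)$ satisfying the stated evolution with interaction coefficient
\[
\alpha_{u_\eta} = \int_{\mathbb{R}^d}\bigl(1-e^{-u_\eta(r)}\bigr)\,\ud r,
\]
which, by monotone convergence, tends to $V_d(1)$ as $\eta\to 0$. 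In parallel, the microscopic free energy $\mathcal E_N^\epsilon$ built with $u_\eta$ converges (in a suitable $\Gamma$-sense) to the hard-sphere free energy defined on the perforated configuration space $\Omega^N\cap\{|\x_i-\x_j|>\epsilon\}$, so the JKO optimality system converges and yields the claimed macroscopic equations with $\alpha_u = V_d(1)$.

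The main obstacle is commuting the limits $\eta\to 0$ and $\epsilon\to 0$: the remainders in the formal expansion of Proposition~\ref{prop:N2} involve derivatives of $u_\eta$, which are not controlled uniformly in $\eta$. A cleaner alternative is to redo the matched-asymptotic proof directly in the hard-sphere geometry. One poses the microscopic JKO scheme on the perforated domain $\Omega^N\cap\{|\x_i-\x_j|>\epsilon\}$ with reflecting boundary data on the diagonals $|\x_i-\x_j|=\epsilon$, and simply drops the $U_N^\epsilon$ term from $\mathcal E_N^\epsilon$. The outer expansion, relying only on the chaos assumption at far distances, is identical to the soft-potential case. The inner problem in stretched variables $\xi=(\x_2-\x_1)/\epsilon$ reduces to a stationary problem on $\{|\xi|>1\}$ with Neumann data on $\{|\xi|=1\}$, whose leading-order pair-correlation solution is $g_0(\xi)=\mathbf{1}_{\{|\xi|>1\}}$. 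The matching with the outer solution then produces the interaction correction to $\phi$ at order $\epsilon^d$ with coefficient $\int_{\mathbb{R}^d}\bigl(1-g_0(\xi)\bigr)\,\ud\xi = V_d(1)$, as stated in the corollary.
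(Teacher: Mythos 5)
Your ``cleaner alternative'' is exactly the paper's route (Subsection 4.4 and Appendix B): work directly on the perforated configuration space $\Omega_\epsilon^2 = \Omega^2\setminus\{\|\x_1-\x_2\|\le\epsilon\}$, drop the interaction term from the final-time condition, replace it by a reflecting (no-flux) boundary condition on the contact set $\{\|\x_1-\x_2\|=\epsilon\}$, and rerun the inner--outer matching; the inner problem on $\{\|\tilde\x\|>1\}$ with Neumann data at $\|\tilde\x\|=1$ is solved by a function constant in $\tilde\x$ at leading order, and the exclusion correction coefficient becomes $\int_{\mathbb R^d}\bigl(1-\mathbf{1}_{\{\|\tilde\x\|>1\}}\bigr)\,\ud\tilde\x = V_d(1)$. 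The regularization approach you float first is not what the paper does, and your own caveat is the genuine obstruction: the remainders in the formal expansion of Proposition~\ref{prop:N2} involve $\nabla u_\eta$ and higher derivatives, which blow up as $\eta\to 0$, so there is no uniform-in-$\eta$ control permitting the interchange with $\epsilon\to 0$; monotone convergence of $\alpha_{u_\eta}\to V_d(1)$ is correct but does not by itself repair this, which is why the paper avoids that route. One step your sketch elides, but which is precisely where the reflecting boundary condition earns its keep in the integrated equation: the region of integration for $\x_2$ is $\Omega_\epsilon(\x_1)=\Omega\setminus B_\epsilon(\x_1)$, which moves with $\x_1$, so passing $\nabla_{\x_1}$ through the $\x_2$-integral requires the Reynolds transport theorem, and the resulting surface term on the contact set cancels exactly because of the no-flux condition there, yielding a divergence-form equation for the marginal $p$. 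The paper also records a small subtlety you did not mention: on the perforated domain the initial data cannot be exactly separable at all orders, but the overlap correction scales with the excluded volume and therefore does not spoil the chaos hypothesis up to the order $\epsilon^d$ needed.
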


\begin{corollary}[$N$ particles case] \label{cor:Ncase}
	The result stated in Proposition \ref{prop:N2} and Corollary \ref{cor:hs} formally extend to any number of particles $N$ under Assumptions \ref{assum_regime}, that is, that the total volume of interaction is small compared to the macroscopic volume $\Omega$. The final condition reads
$$\phi(\x, \Delta t) = - \left[ \log   \bar p_k + V +  \alpha_u (N-1)   \epsilon^d  \bar p_k \right],$$
which coincides with $-\frac{\delta \Ee(\bar p_k)}{\delta p_k}$.
\end{corollary}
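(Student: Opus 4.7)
The strategy is to carry out the same matched-asymptotic analysis used for Proposition \ref{prop:N2}, starting from the full $N$-particle JKO scheme \eqref{variationalNa}, and to exploit that in the regime $\phi = N\epsilon^d \ll 1$ only pairwise close encounters contribute at order $\epsilon^d$. First I would observe that the permutation invariance of $P_0$ together with the symmetry of the generator in \eqref{fps_eq} and of $\EN$ propagates permutation invariance to $\bar P_k$ for every $k\ge 0$. In particular, every one-particle marginal $\bar p_k$ is the same function and every two-particle marginal $\bar P_k^{(2)}(\x_1,\x_2)$ is symmetric. Integrating the Euler--Lagrange system associated with \eqref{variationalN_1} over $\x_2,\dots,\x_N$ therefore yields a closed equation for the pair $(\bar p_k,\phi)$ modulo pair correlations, and by symmetry the reduction can equivalently be done by integrating out any $N-1$ variables.

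Next I would set up a multi-scale ansatz for $P$ generalising the one used for $N=2$: an \emph{outer} region where all pairwise distances exceed $O(\epsilon)$, in which chaos propagates from Assumption \ref{assum_initial} so that $P\approx \prod_i p(\x_i)$ up to $O(\epsilon^d)$, together with $\binom{N}{2}$ \emph{inner} regions, one around each locus $\|\x_i-\x_j\|\sim\epsilon$ with the other $N-2$ coordinates kept at outer scale. The crucial scaling observation is that under Assumption \ref{assum_regime} the measure of the configurations in which three or more particles lie simultaneously within $O(\epsilon)$ of each other is $O(\phi^2)$, so triple and higher correlations do not enter at the order $\epsilon^d$ we track. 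Inside each pair inner region the analysis of Proposition \ref{prop:N2} applies verbatim, with the remaining $N-2$ particles entering only through the outer chaotic density; the boundary-layer matching yields the same local correction $\alpha_u\epsilon^d$ to the free-energy variation, with $\alpha_u$ defined by \eqref{alpha_u0}.

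The final step is to collect the pair contributions in the reduced equation for $\bar p_k(\x_1)$. Of the $\binom{N}{2}$ inner regions, only the $N-1$ ones containing the distinguished label $1$ contribute to the one-particle marginal equation, and by symmetry each produces the same boundary-layer term proportional to $\alpha_u \epsilon^d p(\x_1) p(\x_2)$. Summing them produces the overall factor $(N-1)$ in front of the correction, giving the announced final condition
\begin{equation*}
\phi(\x,\Delta t) = -\bigl[\log \bar p_k + V + \alpha_u (N-1)\epsilon^d \bar p_k\bigr],
\end{equation*}
which is precisely $-\delta\Ee/\delta p_k$ with $\Ee$ as in \eqref{entropy_1}. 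For hard spheres, Subsection \ref{sec:hs} already identifies the inner problem with the two-sphere exclusion analysis for which $\alpha_u = V_d(1)$, so the same combinatorial argument furnishes the extension of Corollary \ref{cor:hs}.

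The principal obstacle is justifying the decoupling of the pair inner regions at order $\epsilon^d$, i.e.\ that the simultaneous presence of up to $O(N)$ active inner regions does not disrupt the chaotic outer structure nor generate an order-$\epsilon^d$ triple-correlation contribution. Since the proposition is a formal asymptotic statement, it suffices to argue at the level of scaling that the cumulative effect of triple and higher overlaps is suppressed by a factor of $\phi$ relative to the pair contribution, a bookkeeping encoded directly in Assumption \ref{assum_regime}; a fully rigorous control of these remainders is left open, consistent with the validity remark following Proposition \ref{prop:N2}.
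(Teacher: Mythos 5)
Your proposal is correct and follows essentially the same route as the paper: decompose configuration space into an outer region where propagation of chaos holds, and pairwise inner regions each reducing to the $N=2$ analysis; estimate the simultaneous-overlap contributions as $O(N^2\epsilon^{2d})$ (the paper does this via a $\binom{N-1}{k}\epsilon^{kd}$ count and spells out the $k$-particle inner solutions in Appendix~\ref{app:Ngeneral}); and sum the $N-1$ pair contributions involving the distinguished coordinate to produce the $(N-1)$ factor, together with the $N$-particle analogue of Lemma~\ref{lem:rel_pq}. The only presentational difference is that the paper phrases the outer/inner split via the truncated flow decomposition~\eqref{phi_truncated} and an explicit partition of the integral $\mathcal I_N$, whereas you emphasise permutation invariance and the $O(\phi^2)$ measure of triple overlaps up front; the substance is the same.
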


\begin{remark}[Macroscopic gradient-flow structure]
	The results above obtain a variational formulation and compatibility conditions for the one-particle density $p$ and the associated flow $\phi$ from the corresponding microscopic quantities satisfying \eqref{variationalNa} in the asymptotic limit $\epsilon \to 0$ given by Assumptions \ref{assum_regime}. The final step to obtain convergence to a macroscopic gradient-flow solution including the first correction term in volume fraction ($N\epsilon^d$) is to take the limit $\Delta t \to 0$ in the variational formulation. Our results show that the limits commute: taking first $\Delta t \to 0$ in \eqref{variationalNa} would take us back to the $N$-particle Fokker--Planck equation \eqref{fps}, which was the starting point in \cite{Bruna:2012cg, Bruna:2017vr} to obtain a macroscopic Fokker--Planck equation for $\epsilon \ll 0$. This equation was shown to admit a gradient-flow structure associated to the free-energy functional \eqref{entropy_1}.
	\end{remark}

\section{Variational formulation} \label{sec:variational}

In this section we define the microscopic problem in detail and present the corresponding  macroscopic problem. We then write both problems in variational form. 
For generality, we expose the problem for soft particles interacting via a repulsive potential (in subsection \ref{sec:hs} we discuss how the hard-core particles case can be seen as a particular limit of soft spheres).

\subsection{Models for soft spheres} \label{sec:softspheres}

We consider the problem satisfied by the joint law of the $N$-particle system $P(\vx,t)$, where each particle evolves according to \eqref{ssde0} with $\chi = 1$ and $\ell = \epsilon \ll 1$:
\begin{subequations}
\label{fpsb}
\begin{alignat}{3}
\label{fps_eqb}
\frac{\partial P}{\partial t} &= \nabla_{\vx} \cdot \left [  \nabla_{\vx} P + \nabla_{\vx} V_N (\vx) P + \nabla_{\vx} U_N^\epsilon (\vx) P \right ], & \qquad &\vx \in \Omega^N, t>0,\\
0 & =  \vec n \cdot \left [  \nabla_{\vx} P + \nabla_{\vx} V_N (\vx) P + \nabla_{\vx} U_N^\epsilon (\vx ) P \right ], &  &\vx \in \partial \Omega^N, t>0, \\
P & = P_0, & & \vx \in \Omega^N, t=0,
\end{alignat}
where $\vec n$ is the outward normal vector on $\partial \Omega^N$, $V_N(\vx) = \sum_{i=1}^N V(\x_i)$ is the total external potential, $U_N$ is the total interaction potential of the system given by \eqref{interact_pot} and $N$, $\epsilon$, and $P_0$ are such that  Assumptions \ref{assum_regime} and \ref{assum_initial} are satisfied. 
The corresponding  free-energy is given by
\begin{equation}
\label{entropysb}
\EN(P)  = \int_{\Omega^N}  \left[ P(\vx, t) \log P(\vx,t) + V_N (\vx) P  + U_N^\epsilon(\vx) P \right] \, \ud \vx.
\end{equation}
\end{subequations}

As discussed in the introduction, using the method of matched asymptotics on \eqref{fpsb} under Assumptions \ref{assum_regime} results in the following nonlinear Fokker--Planck equation for the one-particle marginal $\pe (\x, t)$ (valid up to order $\epsilon^d$) \cite{Bruna:2017vr}
\begin{subequations} \label{fps_one}
\begin{alignat}{3}
\label{fps_one_eq}
\frac{\partial \pe}{\partial t}  &= \nabla_\x \cdot \left \{  \left[1 + \alpha_u (N-1) \epsilon^d \pe \right] \nabla_\x \pe +  \nabla_\x  V (\x) \, \pe \right \}, & \qquad & \x \in \Omega, t>0,\\
0 &= {\bf n} \cdot \left \{  \left[1 + \alpha_u (N-1) \epsilon^d \pe \right] \nabla_\x \pe +  \nabla_\x  V (\x) \, \pe \right \}, &  & \x \in \partial \Omega, t>0,\\
\pe &= p_0^\epsilon, && \x \in \Omega, t=0,
\end{alignat}
where $p_0^\epsilon(\x) = \int_{\Omega^N} P_0(\vx) \delta(\x_1-\x)\ud \vx$.
The associated free-energy to \eqref{fps_one_eq} is 
\begin{equation}
\label{entropys_1}
\Ee ( \pe) = \int_{\Omega} \left[ \pe \log \pe + \frac{1}{2} \alpha_u (N-1) \epsilon^d (\pe)^2 + V(\x) \pe \right]  \, \ud \x.
\end{equation}
\end{subequations}

We want to study the correspondence between the microscopic model \eqref{fpsb} and the macroscopic model \eqref{fps_one} using the free energy and gradient flow description. In other words, is the gradient flow of \eqref{entropys_1} the macroscopic counterpart of the gradient flow of \eqref{entropysb} in the limit given by Assumptions \ref{assum_regime}? In order to gain some insight into this question we will consider the minimizing movement scheme, a time-discrete variational approximation of gradient flows \cite{Jordan:1998wa}. We then perform an asymptotic expansion in an outer region, where it is natural to assume the asymptotic independence of two particles, and in an inner region at the scale of particle sizes, where the size exclusion leads to significant dependence. In order to argue meaningfully with the first-order asymptotics, we also need to understand the linearized problem and its uniqueness. These building blocks will be discussed in the next subsections. 

\subsection{Minimizing movement scheme}

It is well-known that, under certain conditions of the drift, the stationary solutions of the Fokker--Planck equations \eqref{fps_eqb} and \eqref{fps_one_eq} satisfy a variational principle. Namely, they minimize their associated  free-energy functionals $\EN$ and $\Ee$ over the associated class of probability densities, $\mathcal P_2 (\Omega^N)$ and $\mathcal P_2 (\Omega)$ respectively. We will use an extra connection between the  free-energy functional and the Fokker--Planck equation, namely that the solutions of the Fokker--Planck equation follow, at each instant in time, the direction of the steepest descent of the associated free energy functional. In fact, Jordan \etal \cite{Jordan:1998wa} showed that the linear Fokker--Planck equation can be obtained as the limit of a variational scheme. Let $\bar P_{k}$ be the approximated $N$-particle probability density at time $t = k\Delta t$. Given $\bar P_{k-1}$, then we define $\bar P_{k}$ as any solution of the variational problem 
\begin{equation}
\label{discrete1}
\underset{ P_k \in \mathcal P_2(\Omega^N)}{\text{inf }} \left \{ \frac{1}{2} W_2^2 \left(P_k, \bar P_{k-1} \right) + \EN(P_k) \right \},
\end{equation}
where $W_2$ the Wasserstein metric. In order to find the Euler--Lagrange optimality conditions of \eqref{discrete1}, we make use of the Benamou--Brenier formulation of $W_2$ \cite{benamou2000computational} to rewrite \eqref{discrete1} as
\begin{subequations}
\label{constraints1}
\begin{equation} \label{discrete3}
\underset{P_k}{\text{inf}} \underset{(P, \vec U)}{\text{inf}} \left \{ \frac{1}{2}  \int_0 ^{ \Delta t} \int_{\Omega^N} P \| \vec U \|^2 \ud \vx \ud s + \EN(P_k) \right \},
\end{equation}
where the infimum is taken among all the pairs $(P, \vec U)$ and `final position' $P_k$ such that $P_k \in \mathcal P_2(\Omega^N)$, $P: [0, \Delta t] \to \mathcal P_2(\Omega^N)$ with
\begin{alignat}{3}
\label{constraint31}
	\frac{\partial P}{\partial s} + \nabla_{\vx} \cdot (P \vec U) &= 0,  &\qquad & \text{in } \Omega^N \times (0,\Delta t),\\ 
\vec U \cdot \vec n &= 0, & & \text{on }\partial \Omega^N \times (0,\Delta t),\\
P &= \bar P_{k-1}(\vx), &&  \text{in } \Omega^N \times \{ 0\}, \\ 
P &= P_k(\vx), &&  \text{in } \Omega^N \times \{ \Delta t \}.
\end{alignat}
\end{subequations}
This can be understood as an optimal control problem: we need to find the best end point $\bar P_k$ so that $\EN(\bar P_k)$ is the smallest, but also the optimal path with flux $P \vec U$ to get there. We note that in the literature is also common to find this problem defined in the time interval $[0,1]$; in this case the dependency on  $\Delta t$ appears as a coefficient $1/\Delta t$ outside the integral in \eqref{discrete3}. Benamou and Brenier pointed out that the variational scheme \eqref{constraints1} can be seen as a convex minimization problem with linear constraints. The constraint, or the continuity equation \eqref{constraint31}, will be eliminated by introducing a Lagrange multiplier $\Phi$ in the next subsection.

\subsection{Weak formulation and compatibility conditions}

Following the procedure by Brenier \cite{Brenier:2003vb}, the variational problem \eqref{constraints1} is equivalent to
\begin{equation*}
\begin{aligned}
\underset{P_k}{\text{inf}} \underset{(P, \vec U)}{\text{inf}}  \underset{\Phi }{\text{sup}} \, \bigg \{ & \int_0 ^{ \Delta t} \int_{\Omega^N} \left( \frac{\| \vec U \|^2}{2} - \partial_s \Phi - \vec U \cdot \nabla_{\vx} \Phi \right) P  \ud \vx \ud s + \int_{\Omega^N}  \Phi(\vx,\Delta t) P_k \ud \vx \\
& - \int_{\Omega^N}  \Phi(\vx,0) \bar P_{k-1} \ud \vx 
+ \EN(P_k) \bigg \},
\end{aligned}
\end{equation*}
which leads to the classical optimality conditions \cite{santambrogio2015optimal,villani2003topics}
\begin{subequations}
	\label{opticond}
\begin{alignat}{3}
\label{opticond1}
\nabla_{\vx} \Phi &= \vec U, &\qquad & \text{in } \Omega^N \times (0,\Delta t),\\
\label{opticond2}
\Phi(\vx,\Delta t) &= - \frac{ \delta \EN(\bar P_k)}{\delta P_k}, &&  \text{in } \Omega^N \times \{ \Delta t \}\\
\label{opticond3}
\frac{\partial \Phi}{\partial s} + \frac{1}{2} \| \nabla_{\vx} \Phi \| ^2 &= 0, & & \text{in } \Omega^N \times (0,\Delta t).
\end{alignat}
\end{subequations}
Here $\Phi$ can be interpreted as a Lagrange multiplier for conditions \eqref{constraints1}. 

This yields the following variational problem:
given the initial distribution $\bar P_{k-1}$ at time $t = {(k-1) \Delta t}$ and a free-energy $\EN$, determine the pair $(P, \Phi)$ and $\bar P_k(\vx) = P(\vx,\Delta t)$ such that
\begin{subequations}
\label{micro_opti}
\begin{alignat}{3} 
\label{micro_prob}
\frac{\partial P}{\partial s} + \nabla_{\vx} \cdot (P \nabla_{\vx} \Phi) &= 0, & \qquad &\text{in } \Omega^N \times (0,\Delta t),\\
\label{optim2}
\frac{\partial \Phi}{\partial s} + \frac{1}{2} \| \nabla_{\vx} \Phi \| ^2 &= 0, &  &\text{in } \Omega^N \times (0,\Delta t),\\ 
\label{optim4}
\nabla_{\vx} \Phi \cdot \vec n &= 0, & & \text{on } \partial \Omega^N \times (0,\Delta t),\\
P &= \bar P_{k-1}(\vx), & &  \text{in } \Omega^N \times \{ 0 \},\\
\label{optim3}
\Phi &= - \frac{ \delta \EN(\bar P_k)}{\delta P_k} , & & \text{in } \Omega^N \times \{ \Delta t \}.
\end{alignat}
\end{subequations}

We can repeat the procedure to obtain the weak formulation and optimality conditions for the macroscopic Fokker--Planck equation \eqref{fps_one_eq}. We arrive at
\begin{subequations}
\label{macro_opti}
\begin{alignat}{3}
\label{macro1}
\frac{\partial \pe}{\partial s} + \nabla_{\x} \cdot (\pe \nabla_{\x} \phie) &= 0, &  \qquad & \text{in } \Omega \times (0,\Delta t),\\
\label{macro2}
\frac{\partial \phie}{\partial s} + \frac{1}{2} \| \nabla_{\x} \phie \| ^2 &= 0, & & \text{in } \Omega \times (0,\Delta t), \\ 
\label{macro3}
\nabla_{\x} \phie \cdot \bfn &= 0, & & \text{on } \partial \Omega \times (0,\Delta t),\\
\label{macro4}
\pe &= \bar p_{k-1}^\epsilon(\x), & & \text{in } \Omega \times \{ 0 \}\\
\label{macro5}
\phie &= - \frac{ \delta \Ee(\bar p_k^\epsilon)}{\delta p_k^\epsilon}, & & \text{in } \Omega \times \{ \Delta t \},
\end{alignat}
where $\Ee$ is given in \eqref{entropys_1}.   deduce the convergence of the gradient-flow solutions from this

For ease of notation, from now on we will eliminate the bars on $\bar P_{k-1}$ and $\bar P_k$ from the optimality conditions, and analogously for the macroscopic densities. 
\end{subequations}

\subsection{Uniqueness of solutions of the optimality conditions}  \label{sec:uniqueness_nonlinear}

In the following we study the system of nonlinear compatibility conditions in a unified way such that it comprises the microscopic as well as the macroscopic problem. We thus consider a domain $D \subset \mathbb{R}^M$ of arbitrary dimension $M$; $D$ can be the domain in $\mathbb R^{dN}$ for the microscopic model (perforated in the case of hard spheres), or $D=\Omega$ with $M=d$ in the macroscopic model. 
We look for solutions $(p,\phi)$ of
\begin{subequations}
\label{general_opti}
\begin{alignat}{3}
\label{general1}
\frac{\partial p}{\partial s} + \nabla  \cdot (p \nabla \phi) &= 0, &  \qquad &\text{in } D \times (0,\Delta t),\\
\label{general2}
\frac{\partial \phi }{\partial s} + \frac{1}{2} \| \nabla  \phi \| ^2 &= 0, & & \text{in } D \times (0,\Delta t) \\ 
\label{general3}
p \nabla  \phi  \cdot \bfn &= 0, & & \text{on } \partial D \times (0,\Delta t),\\
\label{general4}
p &= p_0, & & \text{in } D \times \{0\}\\
\label{general5}
\phi &= - \frac{ \delta F}{\delta p}(p), & & \text{in } D \times \{\Delta t\}.
\end{alignat}
\end{subequations}
Here, $F$ is a strictly convex functional in the classical sense as \eqref{entropy_1} and \eqref{entropys}, which we assume to be differentiable for the sake of simplicity, but an analogous proof based on subgradients can be carried out in general.  

In order to verify the uniqueness of a solution we can follow the formal proof of Lasry and Lions 
\cite{lasry2007mean} for mean-field games, which have the same structure as \eqref{general_opti}. The key idea is to take the difference of the equations for two solutions $(p_i,\phi_i)$, $i=1,2$, that is, 
\begin{align*}
\frac{\partial p_1}{\partial s} - \frac{\partial p_2}{\partial s} &=  - \nabla  \cdot (p_1 \nabla \phi_1 - p_2 \nabla \phi_2), \\
\frac{\partial \phi_1 }{\partial s} -\frac{\partial \phi_2 }{\partial s} &= - \frac{1}{2} \| \nabla  \phi_1 \| ^2 + \frac{1}{2} \| \nabla  \phi_2 \| ^2 ,
\end{align*} 
then multiply the first equation with $\phi_1-\phi_2$, the second with $p_1-p_2$ and  integrate with respect to space and time. This, together with integration by parts, yields 
\begin{align*}
 - \Big \langle & \frac{ \delta F}{\delta p}(p_1(\cdot,\Delta t)) - \frac{ \delta F}{\delta p}(p_2(\cdot,\Delta t)), p_1 (\cdot,\Delta t)- p_2(\cdot,\Delta t) \Big \rangle \\ 
& = \int_D \left[\phi_1(\cdot,\Delta t) - \phi_2(\cdot,\Delta t) \right] \left[ p_1(\cdot,\Delta t) - p_2(\cdot,\Delta t) \right]\, \ud x \\
& = \int_0^{\Delta t} \! \! \int_D \frac{\partial }{\partial s} \left[(p_1-p_2)(\phi_1-\phi_2)\right] \, \ud x \ud s \\
& = - \int_0^{\Delta t} \! \! \int_D \Big [\nabla  \cdot (p_1 \nabla \phi_1 - p_2 \nabla \phi_2) (\phi_1-\phi_2) + \frac{1}2(p_1-p_2) (\| \nabla  \phi_1 \| ^2 - \| \nabla  \phi_2 \| ^2 )
\Big]\, \ud x \ud s \\
& =  \frac{1}2 \int_0^{\Delta t} \int_D (p_1+p_2) \| \nabla  \phi_1 - \nabla \phi_2 \| ^2 \, \ud x \ud s. 
\end{align*} 
For nonnegative $p_1$ and $p_2$, the right-hand side is obviously nonnegative, while the left-hand side is negative due to the strict convexity of $F$ if $p_1(\cdot,\Delta t) \neq p_2(\cdot,\Delta t)$. Thus we conclude $p_1(\cdot,\Delta t) = p_2(\cdot,\Delta t)$ and $\nabla  \phi_1 = \nabla \phi_2$ on the support of $p_1+p_2$. The uniqueness of the transport equation \eqref{general1} thus implies $p_1 \equiv p_2$.

\subsection{The linearized compatibility conditions}

In order to justify  an asymptotic expansion it is a key issue to understand  the linearized problem and its well-posedness. In the following we will provide an analysis based on the Ladyzhenskaya--Babuska--Brezzi theory  \cite{Brezzi:1991tn} for linear saddle-point problems under suitable conditions, which can be carried out in a dimension-independent way, hence being applicable for the high-dimensional microscopic as well as for the macroscopic problem. As before, we consider a general domain $D \subset \mathbb{R}^M$. 
 
Given a known pair $(q, \varphi)$, around which we linearize the optimality conditions \eqref{general_opti}, we obtain the following linear problem for $(h, f)$:
\begin{subequations}
\label{conditions_hf}
\begin{alignat}{3}
\label{con3}
0 & = \frac{\partial h}{\partial s} + \nabla \cdot (q \nabla f  + h \nabla \varphi ) \qquad & & \text{in } D \times (0,\Delta t),\\
\label{con1}
0 & = \frac{\partial f}{\partial s} + \nabla \varphi  \cdot  \nabla f & & \text{in } D \times (0,\Delta t),\\
0 &= (q \nabla f  + h \nabla \varphi )\cdot n, & & \text{on } \partial D \times (0,\Delta t),\\  
\label{con4}
h  & = h_0 & & \text{in } D \times \{0\}, \\
\label{con2}
f   &=  - \frac{h}{q}  - C q & & \text{in } D \times \{\Delta t\},
\end{alignat}
\end{subequations}
for a nonnegative constant $C$.

Here we prove the existence and uniqueness of the linearized system \eqref{conditions_hf}. 
In order to verify existence and uniqueness of a weak solution of  \eqref{conditions_hf} we introduce a weak formulation, which actually corresponds to a second order approximation of the original variational problem. We will assume that $q$ and $\varphi$ are sufficiently smooth and $q$ is strictly positive in $D \times [0,\Delta t]$. Consequently we introduce a variable corresponding to the flux, that is, a vector field $g=q \nabla f$. Then consider the variational problem of minimizing
\begin{equation} \label{eq:linearsaddle1}
\inf_{g,h} \left( \frac{1}2 \int_0^{\Delta t} \int_D \frac{|g|^2}{q} \, \ud x \ud s + \frac{1}2 \int_D \frac{h(\cdot,\Delta t)^2}{q} \, \ud x  + C \int_D h(\cdot,\Delta t){q(\cdot,\Delta t)} \, \ud x \right),
\end{equation}
subject to 
\begin{equation} \label{eq:linearsaddle2}
 \frac{\partial h}{\partial s} + \nabla  \cdot (g + h \nabla \varphi ) = 0.
\end{equation}
Introducing $f$ as a Lagrange parameter, we obtain the saddle-point problem
\begin{align*}
	\inf_{g,h} \sup_f &\left\{ \frac{1}2 \int_0^{\Delta t} \int_D \frac{|g|^2}{q} \, \ud x \ud s + \frac{1}2 \int_D \frac{h(\cdot,\Delta t)^2}{q(\cdot,\Delta t)} \, \ud x  + C \int_D h(\cdot,\Delta t){q(\cdot,\Delta t)} \, \ud x\right. 
	\\ & \qquad\qquad\qquad\qquad\qquad\left. + \int_0^{\Delta t} \int_D ( \frac{\partial h}{\partial s} f - (g + h \nabla  \varphi )\cdot \nabla  f ) \, \ud x \ud s \right\}. 
\end{align*}
Clearly an appropriate space for $g$ is $L^2((0,\Delta t) \times D)^M$ due to the first term in the functional. Since we expect $\nabla f = \frac{g}q$, an obvious choice is $f  \in L^2(0,\Delta t;H^1(D))$. Hence, the constraint on $h$ and $g$ is to be interpreted in the dual space $L^2(0,\Delta t;H^1(D)^*)$, which makes sense also since $ \nabla  \cdot g \in L^2(0,\Delta t;H^1(D)^*)$.

It remains to define an appropriate space for $h$, which will be based on the method of characteristics. First of all, let $r$ denote the push-forward of $h(\cdot,0)$, that is, the unique solution of 
\begin{equation*}
	\frac{\partial r}{\partial s} + \nabla  \cdot (r \nabla  \varphi ) = 0,
\end{equation*}
with initial value $h(\cdot,0)$. Then we look for a distributional solution $f \in r + W$, where 
\begin{equation*}
	W= \left \{h \in  L^2(0,\Delta t;H^1(D)^*)~ \Big |~\frac{\partial h}{\partial s} + \nabla  \cdot (  h \nabla  \varphi ) \in L^2(0,\Delta t;H^1(D)^*), h(\cdot,0)=0. \right \}.
\end{equation*}
Note that, for $\frac{\partial h}{\partial s} + \nabla  \cdot (  h \nabla  \varphi ) \in L^2(0,\Delta t;H^1(D)^*)$ given, one can reconstruct $h$ with zero initial value by appropriate integration along characteristics and obtains also a distributional trace at $s=\Delta t$. 

In the following we thus look for a solution
\begin{equation*}
	(f,g,h) \in  L^2(0,\Delta t;H^1(D))
 \times L^2((0,T)\times D)^M \times (r+W),
\end{equation*}
using the general theory of saddle-point problems in Hilbert spaces \cite{Brezzi:1991tn}.
 
 \begin{theorem} \label{theorem:linear_var}
 Let $q \in C(0,\Delta t;L^\infty(D))$ be positive, $r \in  L^2(0,\Delta t;H^1(D)^*)$, and $\varphi \in C(0,\Delta t;W^{1,\infty}(D))$.
 Then there exists a unique solution 
 \begin{equation*}
 	(f,g,h) \in  L^2(0,\Delta t;H^1(D))
 \times L^2((0,T)\times D)^M \times (r+W),
 \end{equation*}
 of the variational problem \eqref{eq:linearsaddle1} subject to \eqref{eq:linearsaddle2}, respectively a weak solution of \eqref{conditions_hf}.
 \end{theorem}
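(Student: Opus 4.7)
The plan is to cast \eqref{eq:linearsaddle1}--\eqref{eq:linearsaddle2} in the abstract framework of Brezzi for saddle-point problems in Hilbert spaces, and verify the three standard ingredients: boundedness of the two bilinear forms, coercivity of the primal form on the kernel of the constraint, and an inf--sup condition. Writing $h = r + \tilde h$ with $\tilde h \in W$ absorbs the initial data into the right-hand side; I would take $X = L^2((0,\Delta t)\times D)^M \times W$ for $(g,\tilde h)$ and $Y = L^2(0,\Delta t; H^1(D))$ for the Lagrange multiplier $f$, with the bilinear forms
\begin{align*}
a((g,h),(g',h')) &= \int_0^{\Delta t}\!\!\int_D \frac{g\cdot g'}{q}\,\ud x\,\ud s + \int_D \frac{h(\cdot,\Delta t)\,h'(\cdot,\Delta t)}{q(\cdot,\Delta t)}\,\ud x,\\
b((g,h),f) &= \int_0^{\Delta t}\!\!\int_D \Bigl(\tfrac{\partial h}{\partial s}\,f - (g+h\,\nabla\varphi)\cdot\nabla f\Bigr)\ud x\,\ud s.
\end{align*}
The linear term $C\!\int_D h(\cdot,\Delta t)q(\cdot,\Delta t)\,\ud x$ together with the contributions coming from $r$ enters as bounded functionals on $X \times Y$.

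Boundedness of $a$ and $b$ is immediate from the hypotheses $q, 1/q \in L^\infty$ and $\nabla\varphi \in L^\infty$, Cauchy--Schwarz, and the fact that elements of $W$ admit a distributional trace at $s=\Delta t$ controlled by the $W$-norm (reconstructed by integration along the characteristics of $\nabla\varphi$). For coercivity on $\ker B$, I would exploit that the constraint $\partial_s\tilde h + \nabla\cdot(\tilde h\,\nabla\varphi) = -\nabla\cdot g$ with zero initial data uniquely determines $\tilde h$ from $g$; a Gronwall estimate along characteristics yields $\|\tilde h\|_W \lesssim \|g\|_{L^2}$ and, in particular, a trace bound $\|\tilde h(\cdot,\Delta t)\|_{L^2}\lesssim\|g\|_{L^2}$. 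Combined with $\int\!\!\int|g|^2/q\gtrsim\|g\|_{L^2}^2$, this gives $a((g,\tilde h),(g,\tilde h))\gtrsim\|(g,\tilde h)\|_X^2$ on the kernel.

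The main obstacle is the inf--sup condition. The natural test pair $(g,h)=(-q\nabla f,0)$ yields $b((-q\nabla f,0),f)=\int\!\!\int q|\nabla f|^2\gtrsim\|\nabla f\|_{L^2(L^2)}^2$, while $\|g\|_{L^2}\lesssim\|\nabla f\|_{L^2(L^2)}$, so the ratio controls $\|\nabla f\|_{L^2(L^2)}$. To recover the full $Y$-norm one must handle constants in $f$: because both $\partial_s\tilde h$ and $\nabla\cdot(g+\tilde h\,\nabla\varphi)$ integrate to zero over $D$ (mass conservation of the perturbation), the form $b$ depends on $f$ only through $\nabla f$, so it is legitimate to work modulo constants and close the inf--sup via a Poincar\'e inequality. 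The delicate point is keeping all constants independent of the otherwise arbitrary smooth data $q$ and $\varphi$, which follows from the uniform bounds assumed on $q,1/q$ and $\nabla\varphi$.

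With continuity, kernel coercivity, and the inf--sup condition in place, the classical Brezzi theorem delivers a unique triple $(g,\tilde h,f)\in X\times Y$. The first-order optimality conditions then identify $g = q\nabla f$ and reproduce the transport equation and terminal condition in \eqref{conditions_hf} in the stated weak sense, so that $(f,g,h)$ with $h = r+\tilde h$ is the asserted unique weak solution.
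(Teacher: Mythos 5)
Your framework (Brezzi saddle-point theory), your function spaces, your treatment of the quadratic form, and your coercivity argument on the kernel all match the paper's proof in substance; the paper uses the direct identity $\|h\|_W = \|\nabla\cdot g\|_{L^2(H^{-1})}\le \|g\|_{L^2}$ rather than a Gronwall estimate along characteristics, but these are interchangeable. The gap is in your inf--sup argument.

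You test with $(g,h)=(-q\nabla f,0)$ and obtain control of $\|\nabla f\|_{L^2(L^2)}$, and to recover the full $Y$-norm you argue that $b$ ``depends on $f$ only through $\nabla f$,'' invoking mass conservation of the perturbation so that one may work modulo constants. This is incorrect. The inf--sup condition requires estimating $\sup_{(g,h)\in X} b((g,h),f)/\|(g,h)\|_X$ from below for \emph{every} $f\in Y$, and the supremum ranges over all of $X$, not over pairs obeying the continuity equation; the mass-conservation identity $\int_D\partial_s\tilde h = -\int_D\nabla\cdot(g+\tilde h\nabla\varphi)=0$ is a property of solutions, not of arbitrary test pairs. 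Concretely, for $h\in W$ with $h(\cdot,0)=0$ one has
\begin{equation*}
b\bigl((g,h),\,c\bigr) \;=\; c\int_0^{\Delta t}\!\!\int_D \partial_s h\,\ud x\,\ud s \;=\; c\int_D h(\cdot,\Delta t)\,\ud x,
\end{equation*}
which is nonzero for generic $h\in W$; so $b(\cdot,f)$ does see the constant mode of $f$, and it is not legitimate to quotient it out. The paper closes this hole by making a cleverer choice of test pair: it takes $g=-\lambda\nabla f$ for $\lambda$ large and, crucially, $h$ constant in space with $\partial_s h(s)=\int_D f(x,s)\,\ud x$. With this choice, $b$ produces $\int_0^{\Delta t}\bigl(\int_D f\bigr)^2\ud s + \lambda\int_0^{\Delta t}\!\int_D|\nabla f|^2$ minus a term absorbable for $\lambda$ large, and then Poincar\'e upgrades this to the full $L^2(0,\Delta t;H^1(D))$ norm. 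That explicit injection of control over the spatial mean of $f$ is the missing ingredient in your argument.
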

\begin{proof}
Following \cite[Theorem 1.1, p. 42]{Brezzi:1991tn}, we need to verify an inf-sup condition for the constraint and the coercivity of the quadratic functional on the kernel of the constraint, that is, in the setting of \cite{Brezzi:1991tn}
\begin{equation*}
	a(g,h;\tilde g,\tilde h) = \int_0^{\Delta t} \int_D \frac{g~\tilde g}{q} \, \ud x \ud s + \frac{1}2 \int_D \frac{h(\cdot,\Delta t) \tilde h(\cdot,\Delta t)}{q(\cdot,\Delta t)} \, \ud x
\end{equation*}
and
\begin{equation*}
	b(f;g,h) = \int_0^{\Delta t} \int_D ( \frac{\partial h}{\partial s} f - (g + h \nabla  \varphi )\cdot \nabla f ) \, \ud x \ud s.
\end{equation*}
The inf-sup condition follows immediately by estimating the supremum over all $g,h$ by the value at  $g=-\lambda \nabla  f$ with $\lambda> 0$ sufficiently large, and $h$ constant in space defined via 
$\partial_s h(s) = \int_D f(x,s) \, \ud x $, that is,
\begin{align*} 
\sup_{g,h} \frac{\int_0^{\Delta t} \int_D ( \frac{\partial h}{\partial s} f - (g + h \nabla  \varphi )\cdot \nabla f ) \, \ud x \ud s}{\Vert(g,h) \Vert}  &\geq  c_1 \frac{
\int_0^{\Delta t} (\int_D f \, \ud x)^2 \, \ud s + \int_0^{\Delta t} \int_\Omega |\nabla  f|^2 \, \ud x \ud s}{\Vert (h,- \lambda \nabla  f) \Vert} \\
 &\geq c_2  \sqrt{\int_0^{\Delta t} \left(\int_D f\, \ud x \right)^2 \, \ud s + \Vert \nabla f \Vert_{L^2}^2}. 
\end{align*}
Finally the Poincar\'e inequality implies that the right-hand side can be estimated from below by a multiple of the norm of $f$ in $L^2(0,T;H^1(D))$.

For coercivity, we restrict ourselves to $\frac{1}2 \int_0^{\Delta t} \int_\Omega \frac{g^2}{q} \, \ud x \ud s$, which is clearly coercive with respect to $g$ in $L^2((0,\Delta t) \times D)$. However, for $g \in L^2((0,\Delta t) \times \Omega)$ and $(g,h)$ in the kernel of the constraint we immediately have
\begin{align*} \Vert h \Vert_W^2 &= \int_0^{\Delta t} \left \Vert \frac{\partial h}{\partial s} + \nabla  \cdot (  h \nabla \varphi ) \right \Vert_{ L^2(0,\Delta t;H^1(D)^*)}^2 \, \ud s =
\int_0^{\Delta t} \Vert   \nabla  \cdot g \Vert_{ L^2(0,\Delta t;H^{-1}(D))}^2 \, \ud s 
\\ &\leq \Vert g \Vert_{L^2((0,\Delta t) \times D)}^2,
\end{align*}
which implies also coercivity.
\end{proof}

\section{Derivation of the macroscopic variational Fokker--Planck equation} \label{sec:onespecies}

In this section, we show that the macroscopic compatibility conditions \eqref{macro_opti} can be derived from the corresponding microscopic problem \eqref{micro_opti}. We begin by the simple case of noninteracting particles, and then consider the case of interacting particles. We show the derivation for soft spheres, and conclude the section presenting the key differences in deriving the macroscopic variational problem for hard spheres.

In order to reduce the dimensionality of the problem \eqref{micro_opti}, we consider the marginal densities
\begin{align} \label{marginalPn}
	P_n(\x_1,\dots, \x_n, s) &= \int_{\Omega^{N-n}} P (\vx, s) \ud \x_{n+1} \dots \x_N,
\end{align}
for $n = 1, 2, \dots N-1$.
Integrating \eqref{micro_prob} over $\ud \x_2 \dots \ud \x_N$ using the boundary conditions \eqref{optim3} gives
\begin{align}
\label{integ1}
\frac{\partial p}{\partial s}(\x_1, s) + \nabla_{\x_1} \cdot  \int_{\Omega^{N-1}} \left( P \nabla_{\x_1} \Phi \right) \, \ud \x_2 \dots \ud \x_N = 0,
\end{align}
where $p$ is the one-particle marginal density ($p \equiv P_1$),
\begin{equation}
\label{marginal-s}
p(\x_1, s) = \int_{\Omega} P_2(\x_1, \x_2, s) \, \ud \x_2.
\end{equation}

\subsection{Non-interacting particles} \label{sec:points}

We begin by the simplest case of non-interacting particles, so that the interaction potential is $u \equiv 0$. Using that particles are initially independent and identically distributed, we can write 
\begin{equation} \label{point_indep}
	P(\vx,s) = \prod_{i=1}^N  p(\x_i,s).
\end{equation}
 
Using \eqref{point_indep} and \eqref{optim2}-\eqref{optim3}, the problem for $\Phi$ reads
\begin{subequations}
\label{optimpoint}
\begin{align}
\label{optim2b}
0 &= \frac{\partial \Phi}{\partial s} + \frac{1}{2} \left(  \| \nabla_{\x_1} \Phi \| ^2 + \cdots +  \| \nabla_{\x_N} \Phi \| ^2 \right),\\
\label{final_condpoint}
\Phi(\vx,\Delta t) &= - \sum_{i=1}^N \left[ \log  p_{k}(\x_i) + V(\x_i) \right],
\end{align}
\end{subequations}
Using the decomposition of the flux
\begin{equation}
\label{phi_sep}
\Phi(\vx,s) = \sum_{i=1}^N \varphi(\x_i,s),
\end{equation}
we find that $\varphi(\x, s)$ satisfies
\begin{subequations}
\label{varphimodel}
\begin{align}
\label{varphieq}
0 & = \frac{\partial \varphi}{\partial s}  + \frac{1}{2}  \| \nabla_{\x} \varphi  \| ^2 ,\\
\label{varphicond} 
\varphi(\x,\Delta t)  &= -\log  p_{k} (\x) - V(\x). 
\end{align}
\end{subequations}
This solution is unique (Section \ref{sec:uniqueness_nonlinear}). 
Finally, inserting \eqref{phi_sep} into \eqref{integ1} gives
\begin{align}
\label{integ2}
\frac{\partial p}{\partial s}(\x_1,s) + \nabla_{\x_1} \cdot   \left( p \nabla_{\x_1} \varphi \right) = 0.
\end{align}
Therefore, we have obtained all the macroscopic compatibility conditions \eqref{macro_opti} as required (with $u= 0$). We have shown that, if $(p, \varphi)$  verify \eqref{varphimodel} and \eqref{integ2}, then  $(P, \Phi)$ given by \eqref{point_indep} and \eqref{phi_sep} satisfy \eqref{micro_opti} with $u=0$.

\subsection{Soft-sphere particles: case \texorpdfstring{$N=2$}{N=2}} \label{sec:soft_spheres_limit_N2}

For $N=2$, consider a decomposition of the two-particles flow of the form \cite{Burger:2017ob}
\begin{equation}\label{phi_truncated2}
	\Phi_2(\x_1, \x_2, s) =  \varphi(\x_1,s) + \varphi(\x_2,s) + \varphi_2(\x_1,\x_2, s).
\end{equation}
Then equation \eqref{integ1} reads
\begin{align}
\label{integ1b_N2}
\frac{\partial p}{\partial s} + \nabla_{\x_1} \cdot  \left[ p \nabla_{\x_1} \varphi + \int_{\Omega} P_2(\x_1,\x_2,s) \nabla_{\x_1} \varphi_2(\x_1, \x_2,s)  \, \ud \x_2 \right],
\end{align}

The problem \eqref{micro_opti} for $(P_2, \Phi_2)$ reads
\begin{subequations}
\label{micro_optis}
\begin{alignat}{3}
\label{micro_probs}
0 & = \frac{\partial P_2}{\partial s} + \nabla_{\x_1} \cdot \left(P_2 \nabla_{\x_1}  \Phi_2 \right) + \nabla_{\x_2} \cdot \left(P_2 \nabla_{\x_2}  \Phi_2 \right), \qquad & & \text{in } \Omega^2 \times (0,\Delta t), \\
\label{optim2-s}
0 &=  \frac{\partial \Phi_2}{\partial s} + \frac{1}{2} \| \nabla_{\x_1 } \Phi_2 \| ^2  + \frac{1}{2} \| \nabla_{\x_2 } \Phi_2 \| ^2,   & & \text{in } \Omega^2 \times (0,\Delta t), \\ 
\label{optim4-s}
0 & = P_2 \nabla_{\x_i}  \Phi_2 \cdot {\bf n} ,   & & \text{on } \partial\Omega^2 \times (0,\Delta t),\\
P_2 &= P_{2, k-1}( \x_1, \x_2), & & \text{in } \Omega^2 \times \{0 \}, \\
\label{optim3-s}
\Phi_2 & = -\log P_{2,k} - V(\x_1) - V(\x_2) - u( (\x_1 - \x_2)/\epsilon), & & \text{in } \Omega^2 \times \{ \Delta t \},
\end{alignat}
\end{subequations}
where $P_{n,k}$ denotes the $n$th marginal of $P_k$. 

\subsubsection{Matched asymptotic expansions} \label{sec:mae_soft}

We seek a solution to \eqref{micro_optis} using the method of matched asymptotic expansions \cite{vanDyke:1964vg}. Suppose that  when two particles are far apart ($ \| \x_1- \x_2\| \gg \epsilon$), their Brownian motions are independent, whereas when they are close to each other ($\| \x_1- \x_2\| \sim \epsilon$) they are correlated due to interactions. We designate these two regions of the configuration space $\Omega^2$ the outer region and inner region, respectively. The solution pairs in the outer and inner regions are denoted by $(\Pout, \Phi_\text{out})$ and $(\tilde P, \tilde \Phi)$ respectively. We look for a solution in each region in powers of $\epsilon$,
\begin{alignat*}{2}
    \Pout &= \Pout^{(0)} + \epsilon \Pout^{(1)} + \epsilon^2 \Pout^{(2)} + \cdots, &\qquad  \Phi_\text{out} &= \Phi_\text{out}^{(0)} + \epsilon \Phi_\text{out}^{(1)} + \epsilon^2 \Phi_\text{out}^{(2)} + \cdots,\\
    \tilde P &= \tilde P^{(0)} + \epsilon \tilde P^{(1)} + \epsilon^2 \tilde P^{(2)} + \cdots, & \tilde \Phi &= \tilde \Phi^{(0)} + \epsilon \tilde \Phi^{(1)} + \epsilon^2 \tilde \Phi^{(2)} + \cdots.  
\end{alignat*}
Now we can be more precise about the Assumptions \ref{assum_initial} for the initial data in Section \ref{sec:outline}. 
\begin{assum}[Refined Assumptions \ref{assum_initial}] \label{assum_initial_refined}  We require the outer expansion of the initial data to be of the form:
\begin{align}
\begin{aligned} \label{in_outer}
	\Pout^{(0)}(\x_1, \x_2, 0) &= q_0 (\x_1) q_0(\x_2),\qquad
    \Pout^{(l)}(\x_1, \x_2, 0)  = 0, \quad l = 1, \dots d.
    \end{aligned}
\end{align}
\end{assum}

\paragraph{Outer region} \label{sec:outerb}

In the outer region, $\| \x_1- \x_2\| \gg \epsilon$ and hence the interaction term in \eqref{optim3-s} will be small. Specifically, given the decay of $u$ at infinity in Assumptions \ref{assum_u}, the outer problem up to $O(\epsilon^d)$ does not see the interaction term.\footnote{For a particular interaction potential it could be the interaction term comes at an even higher order. For example, with a Lennard--Jones potential $u \sim r^{-6}$ as $r\to \infty$ so the outer problem up to $O (\epsilon^5)$ is interaction-free. However, for an exponential potential $ u(r) = e^{-r}$,  $u = o(r^{-n})$ for any $n$ as $r \to \infty$ (so the outer will not see the interaction at any order).} Therefore, in the outer region \eqref{micro_optis} becomes, up to $O(\epsilon^d)$
\begin{subequations}
\label{micro_outer}
\begin{alignat}{3}
\label{micro_probo}
0 & = \frac{\partial \Pout}{\partial s} + \nabla_{\x_1} \cdot \left(\Pout \nabla_{\x_1}  \Phi_\text{out} \right) + \nabla_{\x_2} \cdot \left(\Pout \nabla_{\x_2}  \Phi_\text{out} \right), & \quad & \Omega^2 \times (0,\Delta t), \\
\label{optim2-o}
0 &=  \frac{\partial \Phi_\text{out}}{\partial s} + \frac{1}{2} \| \nabla_{\x_1 } \Phi_\text{out} \| ^2  + \frac{1}{2} \| \nabla_{\x_2 } \Phi_\text{out} \| ^2,   & & \Omega^2 \times (0,\Delta t), \\ 
\label{optim4-o}
0 & = \Pout \nabla_{\x_i}  \Phi_\text{out} \cdot {\bf n} ,   & & \partial\Omega^2 \times (0,\Delta t),\\
\label{optim5-o}
\Pout &= P_{\text{out}, k-1}( \x_1, \x_2), & & \Omega^2 \times \{0 \}, \\
\label{optim3-o}
\Phi_\text{out} & = -\log P_{\text{out},k} - V(\x_1) - V(\x_2), & & \Omega^2 \times \{ \Delta t \}.
\end{alignat}
\end{subequations}

At leading order, using \eqref{in_outer}, problem \eqref{micro_outer} for $k = 1$ admits a separable solution, and  hence the same is true for all $k>1$. Therefore, we have that 
\begin{equation}
\label{outer_ansatz}
\Pout^{(0)} (\x_1, \x_2,s) = q(\x_1,s) q(\x_2,s),\qquad 
\Phi_\text{out}^{(0)} (\x_1, \x_2,s) = \varphi(\x_1,s) +  \varphi(\x_2,s),
\end{equation}
for some functions $q$ and $\varphi$. We see that, at leading order, the outer solution has the form we found for non-interacting particles in Section \ref{sec:points}, namely that the  density is a product of densities in $\x_i$ and the flow is a sum of flows in $\x_i$.
In particular, this implies that the outer density $q$  and the outer flow $\varphi$ satisfy the equations for independent particles \eqref{integ2} and \eqref{varphieq}, respectively, found in the previous subsection.  That is, 
\begin{subequations}
\label{micros_lead}
\begin{alignat}{3}
\label{micros_leadP}
0& = \frac{\partial q}{\partial s} + \nabla_{ \x} (q  \nabla_{ \x} \varphi ), \qquad & & \text{in } \Omega \times (0,\Delta t), \\
\label{micros_leadPhi}
0& = \frac{\partial \varphi}{\partial  s}  + \frac{1}{2} \| \nabla_{\x} \varphi \| ^2, & & \text{in } \Omega \times (0,\Delta t), \\
0& = q \nabla_\x \varphi \cdot {\bf n}, & & \text{on } \partial \Omega \times (0,\Delta t),\\
q &= q_{k-1} (\bf x), & & \text{in } \Omega \times \{0 \}, \\
\label{gs_fsb}
\varphi & = - \left[  \log  q_k ( \x)   + V(\x) \right ], \qquad & & \text{in } \Omega \times \{ \Delta t \},
\end{alignat}
\end{subequations}

The $O(\epsilon)$ of \eqref{micro_outer} is, using \eqref{outer_ansatz},
\begin{subequations}
\label{outer_general}
\begin{alignat}{3}
0 & = \frac{\partial \Pout^{(1)}}{\partial s} + \nabla_{\vx} \cdot \left(\Pout^{(1)} \nabla_{ \vx}  \Phi_\text{out}^{(0)} + q(\x_1) q (\x_2) \nabla_{\vx}  \Phi_\text{out}^{(1)} \right) , \quad & & \text{in } \Omega^2 \times (0,\Delta t), \\
0 &=  \frac{\partial  \Phi_\text{out}^{(1)} }{\partial s} +  \nabla_{\vx }  \Phi_\text{out}^{(0)}  \cdot   \nabla_{\vx }  \Phi_\text{out}^{(1)} ,   & & \text{in } \Omega^2 \times (0,\Delta t)\\ 
0 & = \Pout^{(1)}  \nabla_{\x_i}   \varphi(s, \x_i) \cdot {\bf n}  + q(\x_1) q (\x_2) \nabla_{ \x_i }  \Phi_\text{out}^{(1)}  \cdot {\bf n},  & & \text{on } \partial \Omega^2 \times (0,\Delta t),\\
\Pout^{(1)}  &= P_{\text{out}, k-1}^{(1)}(  \vx), & & \text{in } \Omega^2 \times \{0 \}, \\
\Phi_\text{out}^{(1)}  &= - \frac{ P_{\text{out}, k}^{(1)}( \vx) }{ q_k (\x_1) q_k (\x_2)}, & & \text{in } \Omega^2 \times \{ \Delta t \},
\end{alignat}
\end{subequations}
where $\vx = (\x_1, \x_2)$. 
Assumptions \ref{assum_initial} means that $P_\text{out,0}^{(l)} (\vx) = 0$ for all $l\ge 1$. Given the scheme to obtain the iterate $P_{\text{out},k}^{(1)}$ for $k \ge 1$, we find that the zero initial condition propagates and that $(P_{\text{out}}^{(1)}, \Phi_{\text{out}}^{(1)}) = (0,0)$ solves the problem \eqref{outer_general}. This solution is unique using Theorem \ref{theorem:linear_var}. It is straightforward to see that the same is true for the higher-order terms $(P_{\text{out}}^{(l)}, \Phi_{\text{out}}^{(l)})$ up to $l = d$. Therefore, we have found that, up to $O(\epsilon^d)$,
\begin{equation}
	\label{outer-sol}
	\Pout (\x_1, \x_2,s) = q(\x_1,s) q(\x_2,s),\qquad 
\Phi_\text{out} (\x_1, \x_2,s) = \varphi(\x_1,s) +  \varphi(\x_2,s).
\end{equation}

\paragraph{Inner region} \label{sec:inner_soft}
In the inner region, we set $\x_1 = \tilde \x_1$, $\x_2 =  \tilde \x_1 + \epsilon \tilde \x$, and define $\tilde P(\tilde \x_1 , \tilde \x,s) = P_2(\x_1 ,  \x_2,s) $ and $\tilde \Phi (\tilde \x_1 , \tilde \x,s) = \Phi_2(\x_1 ,  \x_2,s) $. With this rescaling, \eqref{micro_optis} becomes
\begin{subequations}
\label{micros_inner-s}
\begin{align}
\label{micros_innerc}
0& = \epsilon^2 \frac{\partial \tilde P}{\partial s} +   \nabla_{\tilde \x_1} \cdot (\epsilon^2\tilde P \nabla_{\tilde \x_1} \tilde  \Phi - \epsilon \tilde P \nabla_{\tilde \x} \tilde  \Phi )  +   \nabla_{\tilde \x} \cdot (2 \tilde P \nabla_{\tilde \x} \tilde  \Phi   - \epsilon \tilde P \nabla_{\tilde \x_1} \tilde  \Phi ) ,   \\
\label{micros_flow}
0& = \epsilon^2 \frac{\partial \tilde \Phi}{\partial s} + \frac{\epsilon^2 }{2} \| \nabla_{\tilde \x_1 } \tilde \Phi \| ^2 -  \epsilon   \nabla_{\tilde \x_1 } \tilde \Phi \cdot \nabla_{\tilde \x } \tilde \Phi + \| \nabla_{\tilde \x } \tilde \Phi \| ^2,
\\
\label{initPs}
\tilde P(s=0) &= \tilde P_{k-1}( \tilde \x_1, \tilde \x), \\
\label{finalphi_s}
\tilde \Phi (s=\Delta t) &=  - \left[ \log \tilde P_k( \tilde  \x_1, \tilde \x) + V(\tilde \x_1) + V(\tilde \x_1 + \epsilon \tilde \x) + u(\tilde \x) \right], 
\end{align}
where $\tilde P_{k-1}( \tilde \x_1, \tilde \x)$ is the inner expansion of $P_{k-1} (\x_1, \x_2)$. To this we need to add matching conditions on $\tilde P$ and $\tilde \Phi$ so that they tend to the outer solution as $\| \tilde \x \| \to \infty$. Expanding the outer solution  \eqref{outer-sol} in inner variables gives
\begin{align} \label{P_match}
\tilde P(\tilde \x_1, \tilde \x,s) &\sim q^2(\tilde \x_1,s) + \epsilon q(\tilde \x_1,s)  \tilde \x \cdot \nabla_{\tilde \x_1} q (\tilde \x_1,s) + \cdots, \\
\label{Phi_match}
\tilde \Phi(\tilde \x_1, \tilde \x,s) &\sim 2 \varphi(\tilde \x_1,s) + \epsilon  \tilde \x \cdot \nabla_{ \tilde \x_1} \varphi(\tilde { \bf x}_1,s) + \cdots,
\end{align}
as $\| \tilde \x \| \to \infty$.
\end{subequations}

We look for a solution of \eqref{micros_inner-s} of the form $\tilde P \sim \tilde P^{(0)} + \epsilon \tilde P^{(1)} + \cdots$  and $\tilde 
\Phi \sim \tilde \Phi ^{(0)} + \epsilon \tilde \Phi ^{(1)} + \cdots$.  The leading-order inner problem is
\begin{subequations}
\label{inner_0s}
\begin{alignat}{2}
\label{inner_0s_1}
 0 &= 2\nabla_{\tilde \x} \cdot \left (\tilde P^{(0)} \nabla_{\tilde \x} \tilde  \Phi ^{(0)} \right ) ,  & \\ 
 \label{inner_0s_2}
 0 &= \left \| \nabla_{\tilde \x } \tilde \Phi ^{(0)}  \right \| ^2,&\\
  \label{inner_0s_3}
\tilde \Phi ^{(0)} &=  - \left[ \log \tilde P^{(0)}_k( \tilde  \x_1, \tilde \x) + 2V(\tilde \x_1)   + u(\tilde \x) \right], & \qquad \text{at } s = \Delta t, \\
\label{inner_0s_4}
\tilde P^{(0)} &\sim q^2(\tilde \x_1,s), & \text{as }  \| \tilde \x \| \to \infty, \\
\label{inner_0s_5}
\tilde \Phi^{(0)} &\sim 2 \varphi(\tilde \x_1,s), & \text{as }  \| \tilde \x \| \to \infty.
\end{alignat}
\end{subequations}
Equation \eqref{inner_0s_2} implies that $\nabla_{\tilde \x } \tilde \Phi ^{(0)} = 0$, which also satisfies \eqref{inner_0s_1}. If we assign to $\tilde \Phi^{(0)}$ the value required at infinity, 
\begin{equation}
\label{sol0_ins}
\tilde \Phi^{(0)} (\tilde \x_1,s)= 2 \varphi(\tilde \x_1,s),
\end{equation}
where $\varphi$ satisfies  \eqref{gs_fsb},  we have that
\[
\tilde \Phi^{(0)} (\tilde \x_1 ,\Delta t) = 2 \varphi(\tilde \x_1,\Delta t) = - 2 \left[  \log  q_k( \tilde \x_1)   + V(\tilde \x_1) \right ].
\]
Comparing this with \eqref{inner_0s_3} we arrive at
\begin{equation*}
 \log \tilde P^{(0)}_k ( \tilde  \x_1, \tilde \x)    =     \log  q^2_k ( \tilde \x_1)  - u(\tilde \x),
\end{equation*}
which yields
\begin{equation}
\label{inner-o1_s}
\tilde P_k ^{(0)} = e^{-u(\tilde \x) } q_k^2(\tilde \x_1).
\end{equation} 
Note that this satisfies the matching condition at infinity \eqref{inner_0s_4} since the potential decays at infinity, $\lim_{r\to \infty} u(r) = 0$ (see Assumptions \ref{assum_u}). 

The $O(\epsilon)$ of \eqref{micros_inner-s} is, using \eqref{sol0_ins} and \eqref{inner-o1_s}, 
\begin{subequations}
\label{inner_1-s}
\begin{alignat}{2}
\label{inner_1-s_eq}
 0 & = \nabla_{\tilde \x} \cdot \left [  \tilde P^{(0)} \left(    \nabla_{\tilde \x} \tilde  \Phi^{(1)} -   \nabla_{\tilde \x_1} \varphi (\tilde \x_1, s) \right) \right] , &  \\
\label{final_o1-s} 
\tilde \Phi ^{(1)} &=  -    \frac{ \tilde P^{(1)} _k( \tilde \x_1, \tilde \x) }{e^{-u (\tilde \x) } q_k^2(\tilde \x_1)} -
\tilde \x \cdot \nabla_{\tilde \x_1} V(\tilde \x_1), &\qquad \text{at } s = \Delta t,\\
\label{outer_P01-s}
\tilde P ^{(1)}  &\sim  q(\tilde \x_1,s)  \tilde \x \cdot \nabla_{\tilde \x_1} q(\tilde \x_1,s), & \text{as }  \| \tilde \x \| \to \infty, \\
\label{outer_phi01-s}
\tilde \Phi ^{(1)}  &\sim   \tilde \x \cdot \nabla_{ \tilde \x_1} \varphi( \tilde { \bf x}_1,s), & \text{as }  \| \tilde \x \| \to \infty.
\end{alignat}
\end{subequations}
Note that the $O(\epsilon)$ of \eqref{micros_flow} is automatically satisfied, so disappears in \eqref{inner_1-s}. 
Solving \eqref{inner_1-s_eq} together with the matching condition \eqref{outer_phi01-s}  we find 
\begin{equation} \label{phi_1}
 \tilde  \Phi^{(1)} (\tilde \x_1, \tilde \x,s) =    \tilde \x \cdot \nabla_{\tilde \x_1} \varphi  (\tilde \x_1,s). 
\end{equation}
Combining \eqref{gs_fsb} and  \eqref{phi_1} into \eqref{final_o1-s}, we find that
\[
\tilde P^{(1)} _k (\tilde \x_1, \tilde \x)   =  e^{-u( \tilde \x)}q_k(\tilde \x_1)  (\tilde \x \cdot \nabla_{\tilde \x_1} q_k(\tilde \x_1) ).
\]

In sum, we find that the inner region solution is, to $O(\epsilon)$,
\begin{subequations}
\label{innersol}
\begin{align}
\label{Pinnersolk}
\tilde P_k ( \tilde \x_1, \tilde \x) &\sim  e^{-u (\tilde \x)} \left[ q_k^2(\tilde \x_1) + \epsilon q_k(\tilde \x_1)  \tilde \x \cdot \nabla_{\tilde \x_1} q_k(\tilde \x_1)  \right],\\
\label{Phiinnersol}
\tilde \Phi  (\tilde \x_1, \tilde \x,s) &\sim 2 \varphi(\tilde \x_1,s) + \epsilon   \tilde \x \cdot \nabla_{\tilde \x_1} \varphi  (\tilde \x_1,s),
\end{align}
where $q_k$ and $\varphi$ satisfy the outer problem \eqref{micros_lead}. We note that the inner region equations  determine completely the flow up to $O(\epsilon)$, but that we only obtain conditions for the density at the final time ($s = \Delta t$) and at infinity ($\|\tilde \x \| \sim \infty$). But since the problem is stationary up to $O(\epsilon)$ ($s$ appears only as a parameter in \eqref{inner_0s} and \eqref{inner_1-s}), we can replace $q_k  ( \tilde \x_1)$ by $q (s,  \tilde \x_1)$ in \eqref{Pinnersolk} and write
\begin{align*}
\tilde P (\tilde \x_1, \tilde \x,s) &=  e^{-u (\tilde \x)} \left[ q^2(\tilde \x_1,s) + \epsilon q(\tilde \x_1,s)  \tilde \x \cdot \nabla_{\tilde \x_1} q(\tilde \x_1,s) \right] + B_0(\tilde \x_1, \tilde \x,s) + \epsilon B_1(\tilde \x_1, \tilde \x,s),
\end{align*}
to $O(\epsilon)$, for any functions $B_i$ ($i=0,1$) such that $B_i = 0$ at $s = \Delta t$ and as $\|\tilde \x \| \sim \infty$, so we do not obtain a unique solution for the density in the inner region. However, the subsequent analysis shows that the value of the integral in \eqref{integ1b_N2} and the integrated compatibility conditions are invariant to $B_i$ and thus it what follows we can simply set them to zero:
\begin{align}
\label{Pinnersol}
\tilde P (\tilde \x_1, \tilde \x,s) &=  e^{-u (\tilde \x)} \left[ q^2(\tilde \x_1,s) + \epsilon q(\tilde \x_1,s)  \tilde \x \cdot \nabla_{\tilde \x_1} q(\tilde \x_1,s) \right] + O(\epsilon^2).
\end{align}
\end{subequations}

\subsubsection{Integrated equations} \label{sec:soft-integral}

We now go back to \eqref{integ1b_N2} and use the inner and outer solutions in order to obtain the optimization conditions for the macroscopic problem \eqref{fps_one}. First, we need the following result. 

\begin{lemma}[Relationship between $p$ and $q$] \label{lem:rel_pq}
The one-particle density $p$ and the outer density $q$ are related by
\begin{align} \label{relation_p_vq}
	p(\x_1,s) = q(\x_1,s)  \left [ \int_\Omega q(\x,s)  \, \ud \x  - \alpha_u   \epsilon^d q(\x_1,s)  \right] + o(\epsilon^d),
\end{align}
where 
\begin{equation} 
\label{alphaV}
\alpha_u =  \int_{\mathbb R^d}  \left ( 1 - e^{- u(\epsilon  \x )} \right)  \ud  \x,
\end{equation}
and $\int_\Omega q(\x,s)  \, \ud \x = 1 +  \epsilon^d a + o(\epsilon^d)$, where $a$ is an order one constant. Therefore, $p = q + O(\epsilon^d)$. 
\end{lemma}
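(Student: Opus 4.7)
The starting point is the marginalization identity $p(\x_1,s) = \int_\Omega P_2(\x_1,\x_2,s)\,\ud \x_2$, and my plan is to evaluate the integral by splitting the domain $\Omega$ into an intermediate ball $B_{\epsilon^\beta}(\x_1)$ (for some $0<\beta<1$) where the inner expansion is valid and its complement where the outer expansion applies. On the outer piece I would simply insert the result \eqref{outer-sol}, $P_2 = q(\x_1)q(\x_2) + o(\epsilon^d)$, and observe that
\begin{equation*}
\int_{\Omega\setminus B_{\epsilon^\beta}(\x_1)} q(\x_1)q(\x_2)\,\ud\x_2 = q(\x_1)\int_\Omega q(\x_2)\,\ud\x_2 - q(\x_1)\int_{B_{\epsilon^\beta}(\x_1)} q(\x_2)\,\ud\x_2,
\end{equation*}
where the last piece is of order $\epsilon^{d\beta}$ and will be cancelled by adding and subtracting $q(\x_1)q(\x_2)$ on the inner piece as well.

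The core of the argument is the inner contribution. After the rescaling $\x_2 = \x_1 + \epsilon\tilde\x$ with Jacobian $\epsilon^d$ I would use \eqref{Pinnersol} for $\tilde P$ and Taylor-expand $q(\x_1+\epsilon\tilde\x) = q(\x_1) + \epsilon\tilde\x\cdot\nabla q(\x_1) + O(\epsilon^2)$ in the subtracted outer form, so that
\begin{equation*}
\tilde P(\x_1,\tilde\x,s) - q(\x_1)q(\x_1+\epsilon\tilde\x) = \bigl(e^{-u(\tilde\x)}-1\bigr)\bigl[q^2(\x_1) + \epsilon\, q(\x_1)\tilde\x\cdot\nabla q(\x_1)\bigr] + O(\epsilon^2).
\end{equation*}
Integrating against $\epsilon^d\,\ud\tilde\x$ over the inner ball (extended to $\mathbb{R}^d$ thanks to the rapid decay of $1-e^{-u}$ at infinity, which follows from Assumption \ref{assum_u}), the leading term yields $-\alpha_u\epsilon^d q^2(\x_1)$, while the $O(\epsilon)$ correction integrates to zero by the radial symmetry of $u$ (the integrand is odd in $\tilde\x$). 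Combining with the outer contribution and checking that the overlap and intermediate-region errors are of size $o(\epsilon^d)$ (this is where the choice of $\beta$ and the control of the $B_i$ matching freedom noted after \eqref{innersol} enters) delivers
\begin{equation*}
p(\x_1,s) = q(\x_1,s)\int_\Omega q(\x_2,s)\,\ud\x_2 - \alpha_u\epsilon^d q^2(\x_1,s) + o(\epsilon^d),
\end{equation*}
which is exactly \eqref{relation_p_vq}.

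For the second assertion, I would apply $\int_\Omega \ud\x_1$ to the identity just obtained and use $\int_\Omega p = 1$: writing $\int_\Omega q = 1 + \epsilon^d a + o(\epsilon^d)$, expansion gives $1 = (1+\epsilon^d a)^2 - \alpha_u\epsilon^d\int_\Omega q^2 + o(\epsilon^d)$, so $a = \tfrac{1}{2}\alpha_u\int_\Omega q^2\,\ud\x$, which is an order-one constant in $\epsilon$. Finally, substituting $\int_\Omega q = 1 + O(\epsilon^d)$ back into \eqref{relation_p_vq} gives $p = q + O(\epsilon^d)$ at once.

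The main obstacle is the rigorous bookkeeping in the intermediate matching region: one has to check that, with the chosen cutoff scale $\epsilon^\beta$, neither the tail of $1-e^{-u(\tilde\x)}$ (which is integrable on $\mathbb{R}^d$ by Assumption \ref{assum_u} since $u = O(r^{-(d+\delta)})$) nor the undetermined component $B_0 + \epsilon B_1$ of the inner density contributes beyond $o(\epsilon^d)$ when integrated against unity — this is the content of the observation in the paragraph preceding \eqref{Pinnersol} that such terms can be set to zero without changing the marginal. Once that and the symmetry cancellation are justified, the rest is an exercise in collecting orders.
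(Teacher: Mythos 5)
Your proof is correct and follows essentially the same route as the paper: marginalize $P_2$ by splitting the $\x_2$-integral at an intermediate scale $\delta=\epsilon^\beta$ with $\epsilon\ll\delta\ll 1$, substitute the outer solution \eqref{outer-sol} and the inner solution \eqref{Pinnersol}, extend the inner integral to $\mathbb{R}^d$, and then normalize $p$ to identify the constant $a$. The only cosmetic difference is that you subtract the outer form $q(\x_1)q(\x_1+\epsilon\tilde\x)$ inside the inner integral so that the two pieces recombine into $\int(1-e^{-u})$ automatically, whereas the paper writes $\delta^d V_d(1)=\epsilon^d V_d(\delta/\epsilon)$ and combines; your version also makes explicit that the $O(\epsilon)$ inner correction vanishes by odd symmetry, a cancellation the paper absorbs into a cruder $O(\epsilon\delta^d)$ bound that is handled by the choice $\delta^{d+1}=\epsilon^l$, $d<l<d+1$.
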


\begin{proof}
To keep the notation simple, in the following we omit the time variable $s$ as an argument of the densities. 
We begin by evaluating the integral in \eqref{marginal-s} by splitting the integration volume $\Omega$ for $\x_2$ into the inner and outer regions and using the inner and outer solutions for $P_2(\x_1, \x_2,s)$, respectively. Even though there is no sharp boundary between the inner and outer regions, it is convenient to introduce an intermediate radius $\delta$, with $\epsilon \ll  \delta \ll 1$, which divides the regions. Then the inner region is $\Omega_\text{in}(\x_1) = \{ \x_2 \in \Omega : \| \x_2 - \x_1 \| < \delta \} $ and the outer region is the complimentary set $\Omega_\text{out}(\x_1) = \Omega \backslash \Omega_\text{in} (\x_1)$. Then 
\begin{align} \label{integral_splitP}
p(\x_1)  = \int_{\Omega} P_2(\x_1, \x_2) \, \ud \x_2  = \int_{\Omega_\text{out}}  P_2   \, \ud \x_2 +  \int_{\Omega_\text{in}}  P_2   \, \ud \x_2. 
\end{align}
The outer integral is, using \eqref{outer_ansatz},
\begin{align*}
\begin{aligned}
\int_{\Omega_\text{out}}  P_2  \, \ud \x_2 &= \int_{\Omega_\text{out}}  \Pout  \, \ud \x_2 = q(\x_1)  \int_{\Omega_\text{out}} q(\x_2)  \, \ud \x_2 + O(\epsilon^l)  \\
&= q(\x_1)  \left [ \int_\Omega q(\x_2)  \, \ud \x_2  -   q(\x_1) \delta^dV_d(1)  \right] + O(\epsilon^l,\delta^{d+1}) ,
\end{aligned}
\end{align*}
where $l>d$ is the decay rate of $u$ at infinity (Assumption \ref{assum_u}) and $V_d(1) $ denotes the volume of the unit ball in $\mathbb R^d$. The inner integral is, using the leading-order of \eqref{Pinnersol}, 
\begin{align*}
	\int_{\Omega_\text{in}} &  P_2  \, \ud \x_2 =  \epsilon^d\int_{\|\tilde \x \| \le \delta/\epsilon}    \tilde P \,  \ud \tilde \x  =  \epsilon^d q^2(\x_1) \int_{\|\tilde \x \| \le \delta/\epsilon}     e^{-u (\tilde \x)}   \ud \tilde \x + O(\epsilon \delta^d).
\end{align*}
Combining the two integrals and choosing $\delta$ such that $\delta^{d+1} = \epsilon^l$ with $d<l<d+1$, we obtain
\begin{align*}
p(\x_1) &= q(\x_1)  \left [ \int_\Omega q(\x_2)  \, \ud \x_2  -   q(\x_1) \left(  \delta^dV_d(1)   - \epsilon^d \int_{\|\tilde \x \| \le \delta/\epsilon}    e^{-u (\tilde \x)}  \ud \tilde \x \right) \right] + O(\epsilon^l)\\
	& = q(\x_1)\left [ \int_\Omega q(\x_2)  \, \ud \x_2  - \epsilon^d   q(\x_1)  \int_{\|\tilde \x \| \le \delta/\epsilon}  \left( 1-  e^{-u (\tilde \x)}  \right) \ud \tilde \x  \right] + O(\epsilon^l),
\end{align*}
using that $\delta^d V_d(1) = \epsilon^d V_d(\delta/\epsilon)$. Since $1 - e^{-u(  \tilde \x )}$ decays at infinity, we can extend the domain of integration to the entire $\mathbb R^d$ introducing only exponentially small errors. Therefore, as required,
\begin{align*}
p(\x_1)  = q(\x_1) \left [ \int_\Omega q(\x_2)  \, \ud \x_2  - \alpha_u \epsilon^d q(\x_1)  \right] + O(\epsilon^l),
\end{align*}
where $\alpha_u$ is given in \eqref{alphaV}. To obtain the asymptotic value of the mass of $q$, we integrate the equation above to impose the normalization condition on $p$:
\begin{align*}
	1 = \int_\Omega p \, \ud \x = \left( \int_\Omega q  \, \ud \x\right)^2 - \alpha_u   \epsilon^d \int_\Omega q^2 \, \ud \x + O(\epsilon^l).
\end{align*}
Rearranging, we find that
\begin{equation*}
	\int_\Omega q(\x)  \, \ud \x = 1 + \frac{1}{2} \alpha_u   \epsilon^d  \int_\Omega q^2 (\x) \,\ud \x + O(\epsilon^l) = 1 + a   \epsilon^d + O(\epsilon^l),
\end{equation*}
as required. The constant is $a = \frac{1}{2} \alpha_u  \int_\Omega q^2 (\x) \,\ud \x$.
\end{proof}

Now we turn to the main contribution of this work. 

\begin{proof}[Proof of Proposition \ref{prop:N2}]
	We consider now the integrated equation \eqref{integ1} with $N=2$ 
\begin{equation}
\label{integ-s}
\frac{\partial p}{\partial s}  + \nabla_{\x_1} \cdot \int_{\Omega}  P_2  \nabla_{\x_1} \Phi_2  \, \ud \x_2  = 0,
\end{equation}
and $\Phi_2$ defined in \eqref{phi_truncated2}.
We introduce a macroscopic mobility $m$ and a macroscopic flow $\phi$ such that 
\begin{equation}
\label{macroflow_def}
m  \nabla_{\x_1} \phi = \int_{\Omega}  P_2 \nabla_{\x_1} \Phi_2  \, \ud \x_2=: \mathcal I (\x_1,s). 
\end{equation}
The integral $\mathcal I$ can be evaluated splitting it into inner and outer parts as in Lemma \ref{lem:rel_pq}. 
Using \eqref{outer_ansatz}, the outer component 
\begin{align}\label{Iout_N2} 
\int_{\Omega_\text{out}}  P_2 \nabla_{\x_1} \Phi_2  \, \ud \x_2 = q(\x_1) \nabla_{\x_1} \varphi(\x_1) \left [ \int_\Omega q(\x_2)  \, \ud \x_2  - q(\x_1) \delta^dV_d(1)) \right] + O(\delta^{d+1}),
	\end{align}
where $\delta \gg 1$ as before, whereas the inner-region integral reads, using \eqref{innersol}, 
\begin{align} \label{Iin_N2}
\begin{aligned}
 \int_{\Omega_\text{in}}  & P_2 \nabla_{\x_1} \Phi_2  \, \ud \x_2 =  \epsilon^{d-1}\int_{\|\tilde \x \| \le \delta/\epsilon}    \tilde P \left( \epsilon \nabla_{\tilde \x_1} - \nabla_{\tilde \x} \right) \tilde \Phi  \ud \tilde \x \\
	&=  \epsilon^{d-1}\int_{\|\tilde \x \| \le \delta/\epsilon}    \Big[ -\tilde P^{(0)}  \nabla_{\tilde \x} \tilde \Phi^{(0)}  \\
	& \hspace{2.6cm} + \epsilon  \big( \tilde P^{(0)} \nabla_{\tilde \x_1} \tilde \Phi^{(0)} - \tilde P^{(1)} \nabla_{\tilde \x} \tilde \Phi^{(0)} - \tilde P^{(0)} \nabla_{\tilde \x} \tilde \Phi^{(1)}\big)  + O(\epsilon^2)\Big] \ud \tilde \x \\
	&= \epsilon^d q^2(\tilde \x_1) \nabla_{\tilde \x_1}  \varphi(\tilde \x_1) \int_{\|\tilde \x \| \le \delta/\epsilon}    e^{-u (\tilde \x)}  \ud \tilde \x + O(\epsilon \delta^{d}).
	\end{aligned}
\end{align}
Combining the two integrals \eqref{Iout_N2}-\eqref{Iin_N2} as in Lemma \ref{lem:rel_pq} and using \eqref{relation_p_vq}, we obtain
\begin{align*}
	\mathcal I (\x_1) &= q(\x_1) \nabla_{\x_1} \varphi(\x_1) \left [ \int_\Omega q(\x_2)  \, \ud \x_2  - \alpha_u \epsilon^d q(\x_1)  \right] + O(\delta^{d+1})\\ &= p(\x_1) \nabla_{\x_1} \varphi(\x_1) + O(\epsilon^l),
\end{align*}
choosing $\delta$ such that $\delta^{d+1} = \epsilon^l$ with $d<l<d+1$ as in Lemma \ref{lem:rel_pq}.
Therefore, from \eqref{macroflow_def} we have that
\begin{equation} \label{rel_varphi_phi}
	m  \nabla_{\x_1} \phi = p \nabla_{\x_1} \varphi + O(\epsilon^l), \quad l>d.
\end{equation}
We use this expression and the condition at time $\Delta t$ to determine the mobility $m $ generally. Using the condition  \eqref{gs_fsb} on $\varphi$ and \eqref{relation_p_vq} we have that, to $O(\epsilon^d)$,
\begin{align*}
m  \nabla_{\x_1} \phi &= -p \nabla_{\x_1} (\log q + V) = -p \nabla_{\x_1} \left[ \log \frac{p}{1 + \epsilon^d(a- \alpha_u q)}  + V\right]\\
&= -p \nabla_{\x_1} \left[ \log \frac{p}{1 + \epsilon^d(a- \alpha_u p)}  + V\right]. 
\end{align*}
Expanding the mobility and the potential in powers of $\epsilon^d$, $m = m^{(0)} + \epsilon^d m^{(1)} + \cdots$ and $\phi = \phi^{(0)} + \epsilon^d \phi^{(1)} + \cdots$, this implies that, at leading order,
\begin{equation*}
	m^{(0)} \nabla_{\x_1} \phi^{(0)} = -  p\nabla_{\x_1} (\log p  + V) 
\end{equation*}
Since the leading order term coincides with case of non-interacting particle, we need to have consistency with the Wasserstein metric (Assumption \ref{assum_flow}). This requires
   $\nabla_{\x_1} \phi^{(0)} = \nabla_{\x_1} \varphi(p)$, where $\varphi(q) = -\log q -V$ and hence  $m^{(0)}(p) = p$. 
At the next order we have that, expanding the logarithm,
\begin{equation} \label{ord1_cond}
m^{(0)} \nabla_{\x_1} \phi^{(1)} + m^{(1)} \nabla_{\x_1} \phi^{(0)} =   p\nabla_{\x_1} (a -  \alpha_u p)  = -  \alpha_u p \nabla_{\x_1} p.
\end{equation}
Substituting for $m^{(0)}$ and $\nabla_{\x_1} \phi^{(0)}$  we obtain
$$ p  \frac{\partial \phi^{(1)} }{\partial p} \nabla_{\x_1} p + p  \frac{\partial \phi^{(1)} }{\partial V}  \nabla_{\x_1} V  - M^{(1)}(  \nabla_{\x_1}p + p \nabla_{x_1} V)    = -  \alpha_u p \nabla_{\x_1} p$$
with $M^{(1)} p = m^{(1)}$. Since this relation is to hold for all $V$ and $p$, in particular their gradients being linearly independent we conclude
$$  p \left( \frac{\partial \phi^{(1)} }{\partial p}  + \alpha_u \right)- M^{(1)} = 0, \qquad  \frac{\partial \phi^{(1)} }{\partial V} - M^{(1)} =0. $$
This implies the conservation law
$$ p \frac{\partial  }{\partial p}  \left(\phi^{(1)} + \alpha_u p \right) =  \frac{\partial  }{\partial V}  \left(\phi^{(1)} + \alpha_u p \right) ,$$
with solution 
$$ \phi^{(1)} = F(\log p + V) - \alpha_u p, \qquad m^{(1)} = p  F'(\log p + V)   $$ 
for an arbitrary differentiable function $F$. A particular solution is given by $F=0$, which leads to $\phi^{(1)} = - \alpha_u p$ and $m^{(1)} = 0$. With this choice, the macroscopic flow is, to $O(\epsilon^d)$,
\begin{equation} \label{rel_varphi_phi2}
	\phi =  \varphi(p) - \alpha_u \epsilon^d p   = -\log p - V - \alpha_u \epsilon^d p .
\end{equation}
 
We now put everything together to show that the macroscopic pair $(p, \phi)$ satisfies the optimality conditions \eqref{macro_opti}. The Euler--Lagrange equation \eqref{macro1} and the no-flux boundary condition \eqref{macro3} are satisfied using \eqref{integ-s} and \eqref{macroflow_def} with $m(p) = p$. The flow $\phi$ satisfies the Hamilton--Jacobi equation \eqref{macro2} up to $O(\epsilon^d)$ using \eqref{micros_leadPhi} together with \eqref{rel_varphi_phi}. The final time condition \eqref{macro5} is exactly given by \eqref{rel_varphi_phi2}.  
\end{proof}

Let us mention that the proof indicates that the choice of $\phi$ is not unique, but our assertion is only that the specific choice $F=0$ yields a solution. The non-uniqueness is related to the fact that also the Fokker--Planck equation \eqref{fps_one_eq} can be written as (here for $N=2$)
$$ \frac{\partial \pe}{\partial t} = \nabla_{\x} \cdot \left \{ \pe [1+\epsilon^d F'(\log \pe + V) ] \nabla_\x \left[ V + \log \pe + \alpha_u \epsilon^d \pe - \epsilon^ d F(\log \pe + V) \right]
\right \}$$
up to terms of order $\epsilon^d$. For each $F$ we obtain a different gradient flow structure; but only the structure for $F$ constant, that is, mobility equal to $p$, has a mobility independent of $V$, which seems a reasonable choice. Note the same ambiguity is apparent in the microscopic Fokker-Planck equation for two particles, we could always rewrite it with a nonlinear mobility depending on the potential, for $F$ nonlinear it would lose the gradient flow structure however. If one accepts the Wasserstein metric as the natural one for the Fokker-Planck equation the ambiguity is eliminated.

Finally, we point out that our Assumptions \ref{assum_u} about the interaction potential $u$ could be refined to potentials such that $\alpha_u$ is defined. 

\subsection{Soft-sphere particles: general \texorpdfstring{$N$}{N}} \label{sec:soft_spheres_limit_N_general}

In this section we outline the result in Corollary \ref{cor:Ncase}.
For a general $N$, we consider a decomposition of the microscopic flow of the form \cite{Burger:2017ob}
\begin{equation}\label{phi_truncated}
	\Phi(\x_1, \dots, \x_N, s) = \sum_{i=1}^N \varphi(\x_i,s) + \sum_{i=1}^N \sum_{j>i}^N \varphi_2(\x_i,\x_j, s) + \cdots.
\end{equation}
In \eqref{phi_truncated}, each new term in the series describes higher-order interactions. For example, for non-interacting particles $\varphi_n = 0$ for all $n\ge2$. However, in the scaling we consider here, we find that $\varphi_n$ vanishes in the outer region for $n \geq 2$ and is of at least of order $\epsilon^2$ for $n \geq 3$ in the inner region. We give more detailed arguments on the expansion of $\Phi$ in the $N$ particle case in Appendix  \ref{app:Ngeneral}.

For non-interacting particles, we have seen that the microscopic density $P$ is the product of $N$ one-particle densities $p$, while the microscopic flow $\Phi$ is the sum of $N$ one-particle flows $\phi$. For pairwise interacting particles, we can neglect the interactions at all orders higher than two in \eqref{phi_truncated} since they lead to higher-order terms. Specifically, starting from the integrated equation \eqref{integ1},  we write
\begin{align}
\label{integ1_N}
\frac{\partial p}{\partial s} + \nabla_{\x_1} \cdot  \mathcal I_N = 0, \qquad \mathcal I_N(\x_1,s) = \int_{\Omega^{N-1}} \left( P \nabla_{\x_1} \Phi \right) \, \ud \x_2 \dots \ud \x_N,
\end{align}
and, as in the $N=2$ case, we define the macroscopic mobility and flux such that $m(p) \phi = \mathcal I_N$. 
This integral over configuration space can be split according to the number of particles from $\x_2, \dots, \x_N$ that are within an order $\epsilon$ distance to $\x_1$:
\begin{equation*}
	\mathcal I_N = \mathcal I_{N,\text{out}} + \sum_{i=2}^N \mathcal I_{N,\text{in}(i)} + \sum_{i_2>i_1\ge 2}^N \mathcal I_{N,\text{in}(i_2,i_2)}^{(2)} + \cdots,
\end{equation*}
where $\mathcal I_\text{out}$ is the integral of the outer expansion over the whole domain $\Omega^{N-1}$, $\mathcal I_{\text{in}(i)}$ is the integral correction over the region corresponding to $\| \x_1 - \x_i\| = O(\epsilon)$. Likewise, $\mathcal I_{\text{in}(i_1,\ldots,i_k)}^{(k)}$ is the integral correction (to the first $k-1$ terms) over the region corresponding to $\x_{i_1}, \ldots \x_{i_k}$ being in the inner region of $\x_1$. 
The corresponding values for the first two terms can be computed using the inner expansion for the $N$-particle case in Appendix \ref{app:Ngeneral} and following the calculation of the proof of Proposition \ref{prop:N2} as 
\begin{align*}
    \mathcal I_{N,\text{out}} &= \int \prod_i q(\x_i)  \nabla \varphi(\x_1)\, \ud \x_2 \dots \ud \x_N = q(\x_1) \nabla \varphi (\x_1) \left(\int_\Omega q(\x_2) \ud\x_2\right)^{N-1} + O(\epsilon^{d+1}),\\
    \mathcal I_{N,\text{in}(i)} &= \epsilon^d \int \left(e^{ u(\tilde \x_i)}-1 \right) \prod_{j=2,j\ne i}^N q(\tilde \x_j)  \nabla \varphi(\tilde \x_1)\, \ud \tilde \x_2 \dots \ud \tilde \x_N  \\
   & = - \epsilon^d \alpha_u q(\x_1 ) \nabla_{\x_1} q(\x_1 ) \left(\int_\Omega q(\x_2)  \ud \x_2\right)^{N-2} + O(\epsilon^{d+1}).
\end{align*}
For the other terms we obtain to leading order 
\begin{align*}
 \mathcal I_{\text{in}(i_1,\ldots,i_k)}^{(k)} &=  \epsilon^{kd} \int \prod_{\ell=2}^k \left(e^{ u(\tilde \x_{i_\ell})}-1 \right) q(\tilde \x_1)^k \prod_j q(\tilde \x_j)  \nabla \varphi(\tilde \x_1)\, \ud \tilde \x_2 \dots \ud \tilde \x_N  =  
O(\epsilon^{kd}),
\end{align*}
where $2\le j \le N$ such that $j \ne i_\ell$, $\ell = 1, \dots, k$ and $\tilde \x_j = \x_j$. 
Thus we see that, for some constant $C$, the estimate
$$\left\vert \sum_k \mathcal I_{\text{in}(i_1,\ldots,i_k)}^{(k)} \right\vert \leq C \sum_{k=2}^{N-1}  \binom{N-1}{k} \epsilon^{kd} 
= O(N^2 \epsilon^{2d}) $$
holds, where we have used the particle indistinguishability and the fact that there are $\binom{N-1}{k}$ tuples $(i_1,\ldots,i_k)$.

Using the above, $\mathcal I_N$ in \eqref{integ1_N} simplifies to
\begin{equation*}
	\mathcal I_N = q(\x_1) \nabla_{\x_1} \varphi(\x_1) \left [ \left(\int_\Omega q(\x_2)\right)^{N-1}  \, \ud \x_2  - (N-1)\alpha_u \epsilon^d q(\x_1)  \right] + O(N \epsilon^l),
\end{equation*}
with $d<l<d+1$. Similarly, the result stated in Lemma \eqref{lem:rel_pq} extends to $N$ general as
\begin{equation*}
	p(\x_1,s) = q(\x_1,s)  \left [ \left( \int_\Omega q(\x,s)  \, \ud \x\right)^{N-1}  - (N-1)\alpha_u   \epsilon^d q(\x_1,s)  \right] + o(N\epsilon^d),
\end{equation*}
Combining the two lines above we arrive again at the same result relating $\varphi$ to the macroscopic flux $\phi$ as in the $N=2$ case, \eqref{rel_varphi_phi}. The remaining steps to arrive at the macroscopic compatibility conditions \eqref{macro_opti} follow exactly the $N=2$ case and will be omitted.  

\subsection{Hard-sphere particles} \label{sec:hs}

In this section we outline the result in Corollary \ref{cor:hs}, extending the result of Proposition \ref{prop:N2} to hard sphere particles. This corresponds to the interaction potential $u_\text{HS}(r/\epsilon)$ with $u_\text{HS}(r) = + \infty$ for $r<1$ and 0 otherwise, so that particles cannot get closer to each other than their diameters $\epsilon$. In this case, it is convenient to move the interaction from the equation (both the SDE \eqref{ssde0} and the microscopic Fokker--Planck \eqref{fps_eqb}) to a reflective boundary condition at $\|\x_1 - \x_2\| = \epsilon$, so that the microscopic problem does not have any singular terms in the equation (see \cite{Bruna:2012cg}). The domain of definition is then given by $\Omega_\epsilon^2 = \Omega^2 \setminus \{ (\x_1, \x_2) \in \Omega^2, \| \x_1- \x_2\| \le \epsilon \}$. The microscopic compatibility conditions are obtained analogously to the soft particles case, to give
\begin{subequations}
\label{micro_opti_hs}
\begin{alignat}{3}
\label{micro_prob_hs}
0 & = \frac{\partial P_2}{\partial s} + \nabla_{\x_1} \cdot \left(P_2 \nabla_{\x_1}  \Phi_2 \right) + \nabla_{\x_2} \cdot \left(P_2 \nabla_{\x_2}  \Phi_2 \right), \qquad & & \text{in } \Omega_\epsilon^2 \times (0,\Delta t), \\
\label{optim2_hs}
0 &=  \frac{\partial \Phi_2}{\partial s} + \frac{1}{2} \| \nabla_{\x_1 } \Phi_2 \| ^2  + \frac{1}{2} \| \nabla_{\x_2 } \Phi_2 \| ^2,   & & \text{in } \Omega_\epsilon^2 \times (0,\Delta t), \\ 
\label{optim4_hs}
0 & = P_2 \nabla_{\x_i}  \Phi_2 \cdot {\bf n} ,   & & \x_i \in  \partial\Omega \times (0,\Delta t),\\
\label{bcinb}
0& = P \left(\nabla_{\x_2}  \Phi - \nabla_{\x_1}  \Phi \right) \cdot {\bf n}_2 , & & \{\| \x_1- \x_2\| = \epsilon\} \times (0,\Delta t), \\
P_2 &= P_{2, k-1}( \x_1, \x_2), & & \text{in } \Omega_\epsilon^2 \times \{0 \}, \\
\label{finalphib}
\Phi_2 & = -\log P_{2,k} - V(\x_1) - V(\x_2), & & \text{in } \Omega_\epsilon^2 \times \{ \Delta t \}.
\end{alignat}
\end{subequations}

Most of the steps in the derivation are analogous to the soft spheres case in Subsection \ref{sec:soft_spheres_limit_N2}. Hence, here we only highlight the key differences arising when considering hard spheres, and we leave the calculation to Appendix \ref{sec:hs_appendix}:
\begin{itemize}
	\item The microscopic model \eqref{micro_opti_hs} is defined in a perforated domain, namely $\Omega_\epsilon^2 = \Omega^2 \setminus \{ (\x_1, \x_2) \in \Omega^2, \| \x_1- \x_2\| \le \epsilon \}$.
	\item At the microscopic level, the interaction between particles appeared as the term $u( (\x_1 - \x_2)/\epsilon)$ in the final-time condition \eqref{optim3-s} for soft spheres. Instead, here it enters in all the conditions through the perforated domain $\Omega_\epsilon^2$, and the additional boundary condition \eqref{bcinb} which ensures conservation of mass.
	\item However, at the macroscopic level we obtain the same structure for hard spheres than soft spheres. The macroscopic compatibility conditions are the same as in Proposition \ref{prop:N2} with $\alpha_{u_\text{HS}} = V_d(1)$ ($2$ for $d=1$, $\pi$ for $d=2$ and $4\pi/3$ for $d=3$). This is consistent with the macroscopic Fokker--Planck model obtained in \cite{Bruna:2012cg}.
	\item The initial condition $P_0(\x_1,\x_2)$ in the outer region  cannot be separable at all orders as in Assumption \ref{assum_initial_refined}. However, the correction due to overlaps in the initial condition $P_0$ scales with the excluded volume, and therefore, up to $l =d$, \eqref{in_outer} applies. 
\end{itemize}

The connection between the cases of hard spheres and particles interacting with a short-range potential $u$ at the macroscopic level motivates the following definition of an effective hard-sphere diameter.

\begin{definition}[Effective hard-sphere diameter] Given a repulsive short-range potential $u(r/\epsilon)$ with range $\epsilon \ll 1$, we define its relative effective hard-sphere diameter $\epsilon_u$ such that $\alpha_u \epsilon_u^d = \alpha_{u_\text{HS}} = V_d(1)$, or equivalently
\begin{equation}
	\epsilon_u^d  = d \int_0^\infty \left[1-\exp(-u) \right] r^{d-1} \ud r.
\end{equation}
This coincides with Rowlinson's concept of an effective hard sphere diameter and can be generalized to attractive-repulsive potentials \cite{Barker:1967gx,Henderson:2010ei}.

\end{definition}

\section{Numerical examples} \label{sec:numerics}

In this section we present several numerical examples of the macroscopic equation \begin{equation}
\label{macro_eq_num}
\frac{\partial \pe}{\partial t} = \nabla_\x \cdot \left \{  \left[ 1 + \alpha_u (N-1) \epsilon^d \pe \right] \nabla_\x  \pe + \nabla_\x  V (\x) \pe \right \}, \qquad \x \in \Omega,
\end{equation}
to illustrate the effect the nonlinear diffusion has on the behaviour of solutions and their convergence to the steady states. Throughout this section we use no-flux boundary conditions on $\partial \Omega$, where $\Omega = [-1/2, 1/2]^d$. We will also compare the predictions of \eqref{macro_eq_num} with stochastic simulations of the corresponding microscopic model (soft or hard spheres depending on the choice of $\alpha_u$). 

\begin{example}[Convergence rate]
We first consider \eqref{macro_eq_num} in one dimension ($d=1$) without external potential (that is, $V=0$). The corresponding unit-mass steady state is $p_{\epsilon,\infty}  = 1$, which is the unique minimizer of the  free-energy 
\begin{equation}
\label{entropy_num}
\Ee ( \pe) = \int_{\Omega} \left[ \pe \log \pe + \frac{1}{2}\alpha_u (N-1) \epsilon^d (\pe)^2 + V(\x) \pe \right]  \, \ud \x.
\end{equation}
In the case without interactions ($\epsilon = 0$), \eqref{macro_eq_num} is simply the diffusion equation, whose solutions approach the steady state with an exponential convergence rate. However, it is not clear what effect has the nonlinearity in the convergence rate. 

We solve \eqref{macro_eq_num} numerically using the finite-volume method presented in \cite{Carrillo:2015cf}, with the initial condition $\pe(\x, 0) = \chi_{[0.2, 0.4]}$ and $N = 100$ hard rods of length $\epsilon = 0.0015$. We also solve the linear case, setting $\epsilon = 0$ in \eqref{macro_eq_num}. The decay of the  free-energy \eqref{entropy_num} for these two cases is shown in Figure \ref{fig:conv} as $\Delta E (t) = \Ee( \pe(x,t)) - \Ee( p_{\epsilon,\infty}(x))$. We observe the expected exponential convergence with rate $r_0 = 2 \lambda_1 = 2 \pi^2$ in the linear case. In the nonlinear case, we also observe an exponential convergence with rate $r_\epsilon > r_0$. 
\def \scc {0.9}
\def \scl {1}
\begin{figure} \label{fig:conv}
\unitlength=1cm
\begin{center}
\psfrag{t}[][][\scl]{$t$} \psfrag{ReE}[][][\scl]{$\Delta E (t)$} 
\psfrag{r1}[][][\scc]{$r_0$}
\psfrag{r0}[][][\scc]{$r_\epsilon$}
\psfrag{r2}[][][\scc]{$\tilde r_\epsilon$}
  \includegraphics[width = 0.6\textwidth]{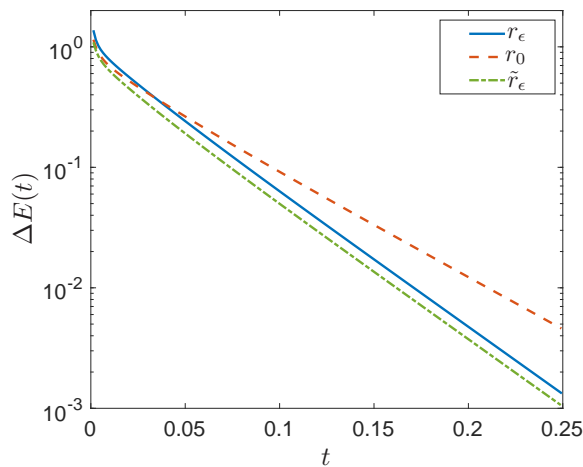} 
  \caption{Free-energy decay towards the equilibrium solution of \eqref{macro_eq_num} with $d=1$, initial data $\pe(\x, 0) = \chi_{[0.2, 0.4]}$, and no external potential ($V=0$). Comparison of the linear case (without interactions, $\epsilon = 0$, red dashed line $r_0$), the nonlinear case with $N = 100$ hard rods  ($\alpha = 2$, blue solid line, $r_\epsilon$) of length $\epsilon = 0.0015$, and the linearized equation around $p_{\epsilon,\infty} = 1$ (green dot-dashed line, $\tilde r_\epsilon$). Slopes of linear fit are shown in the legend.}
  \end{center}
\end{figure}

In order to approximate the increased rate $r_\epsilon$, we look at the linearized version of equation \eqref{macro_eq_num} around the equilibrium $p_{\epsilon,\infty} = 1$, which corresponds to the linear diffusion equation but with diffusion coefficient $D_\text{eff} = 1 + \alpha (N-1) \epsilon ^d$, equal to 1.3 for our choice of parameters. It can easily be shown that in this case the rate is $\tilde r_\epsilon = D_\text{eff} r_0$. We find that the  free-energy decay of the linearized equation agrees with this prediction, as well as with the  free-energy decay of the nonlinear equation (see Figure \ref{fig:conv}), indicating that the solution is already very close to the equilibrium and well approximated by the linearized equation.
\end{example}

In the next examples we compare the behaviour of the solutions of \eqref{macro_eq_num} for different interaction types and external potentials, with the corresponding microscopic  particle-level model. For the particle-level simulations, we use the open-source C\texttt{++} library Aboria \cite{Robinson:2017vxa}. The overdamped Langevin equation \eqref{hsde} is integrated using the Euler--Maruyama method and a constant timestep $\Delta t$. 

In order to compare the models at the density level, we perform $R$ independent realizations and output the positions of all $NR$ particles at a set of output time points. A histogram of the positions is calculated and then scaled to produce a discretized density function ($ p_{i}(t) \approx  p(x_i, t)$ in 1d, $ p_{ij}(t) \approx  p (x_i, y_j, t)$ in 2d,\dots) that can be compared with the solution to \eqref{macro_eq_num}. The macroscopic  free-energy \eqref{entropy_num} should approximate well the microscopic  free-energy. For hard spheres this is 
\begin{equation}
\label{microentropy_hs}
\EN(P)  = \int_{\Omega_\epsilon^N}  \left[ P(\vx, t) \log P(\vx,t) + \sum_{i=1}^N V(\x_i) P  \right] \, \ud \vx.
\end{equation}
Without the $P \log P$ term, this would be straightforward using a Monte Carlo integration; however the entropic term make things more complicated. One approach would be to compute an estimate $\hat P(\vx, t)$ of the joint probability density $P$ (using for example a Kernel Density Estimate) and then obtaining an estimate of $\EN$ either by a so-called re-substitution estimate (Monte Carlo integration using the same samples used to obtain $\hat P$) or a splitting data estimate (generating new samples for the Monte Carlo integration) \cite{Beirlant:1997tw}
\begin{equation}\label{entropy_estimation}
\hat {\EN}_R (t) = \frac{1}{R} \sum_{k=1}^R \left( \log \hat P(\X_1^k, \dots, \X_N^k, t) + \sum_{i=1}^N V(\X_i^k)  \right),
\end{equation}
where $\X_i^k$ is the position of the $i$th particles in the $k$th sample at the time of the   free-energy estimate. 
A cheaper alternative approach is to use the discretized density function $ p_{i}$ and compute the approximation to the  free-energy using a discretized version of \eqref{entropy_num}.  The direct estimation of the microscopic  free-energy \eqref{microentropy_hs} using \eqref{entropy_estimation} is out of the scope of this paper, and we are going to use the second approach. 

\begin{example}[Hard-core interacting particles] We consider a two-dimensional system ($d=2$) with $N=1000$ hard-core disks of diameter $\epsilon = 0.01$, and a quadratic external potential in the horizontal direction, $V(\x) = 5 x^2$ (see Figure \ref{fig:case1}(a)). In two dimensions, the hard-core potential has $\alpha = \pi$, and for our choice of parameters the coefficient of the nonlinear term in \eqref{macro_eq_num} is $\alpha (N-1) \epsilon^2 = 0.314$. We choose initial data constant in the vertical direction so that the evolution of \eqref{macro_eq_num} is purely in the $x$-direction. Specifically, we take initial data $p(\x, 0) = \chi_{[0.1, 0.3]} (x)$. In Figure \ref{fig:case1}(b) we plot the early-time evolution, and in Figure \ref{fig:case1}(c) the steady-state solutions. As expected, we find an increased speed of convergence to equilibrium in the case of interactions (see Figure \ref{fig:case1}(d)). We observe good agreement between the PDE solutions and the stochastic simulations. 
\def \scc {0.5}
\def \scl {.7}
\begin{figure}
\unitlength=1cm
\begin{center}
\vspace{3mm}
\psfrag{x}[][][\scl]{$x$} \psfrag{V}[][][\scl]{$V(x)$} \psfrag{t}[][][\scl]{$t$} \psfrag{pi}[r][][\scl]{$ p_{\epsilon,\infty}$} \psfrag{p}[r][][\scl]{$ \pe$} \psfrag{Erel}[][][\scl]{$\Delta E[ \pe]$} \psfrag{t}[b][][\scl]{$t$}
\psfrag{a}[][][\scl]{(a)} \psfrag{b}[][][\scl]{(c)} \psfrag{c}[][][\scl]{(d)} \psfrag{d}[][][\scl]{(b)}
\psfrag{pdeon}[][][\scc]{PDE int.}
\psfrag{pdeof}[][][\scc]{PDE p.}
\psfrag{simon}[][][\scc]{SDE int.}
\psfrag{simof}[][][\scc]{SDE p.}
	\includegraphics[height = .3\textwidth]{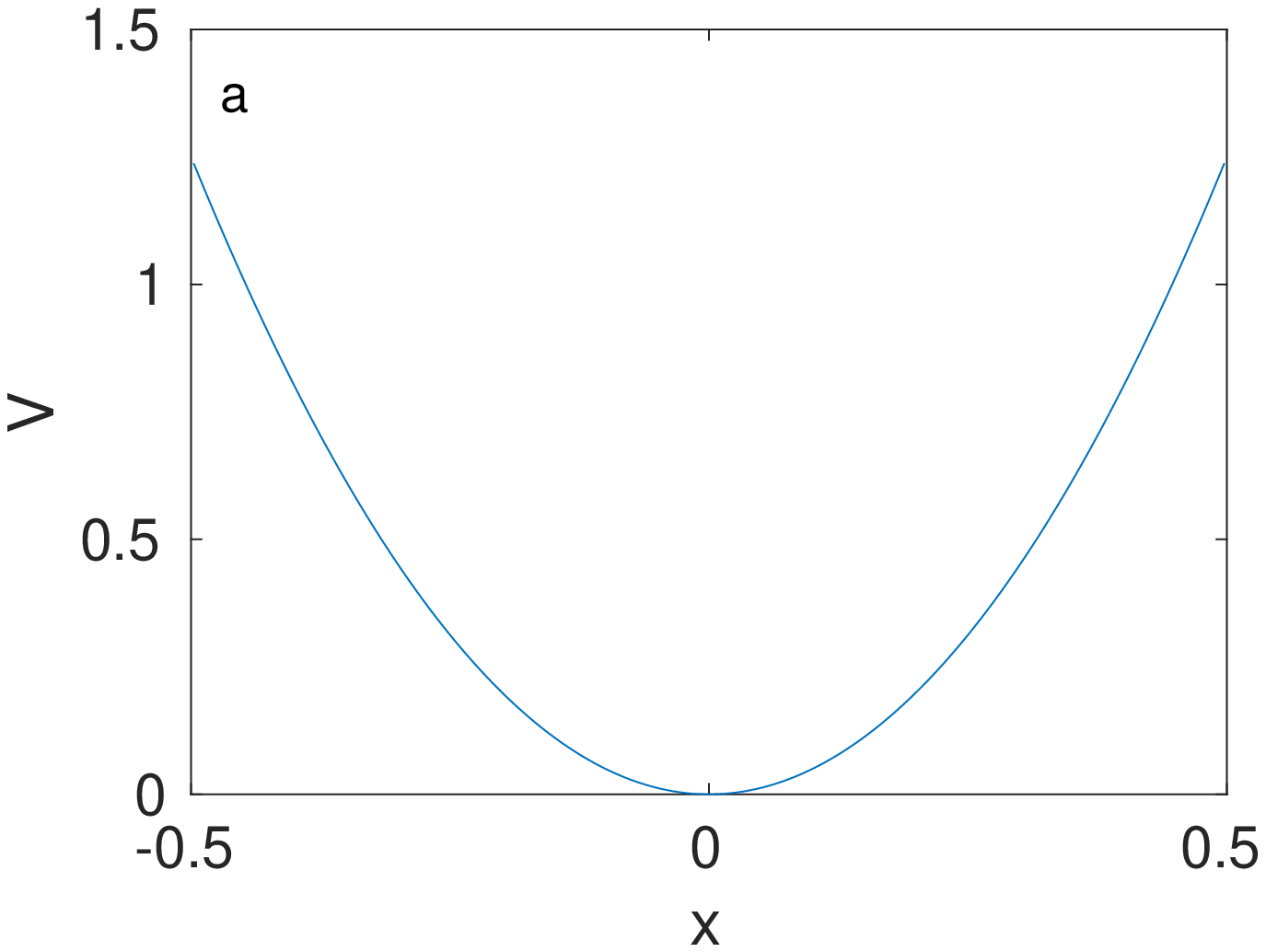} \qquad 
	\includegraphics[height = .3\textwidth]{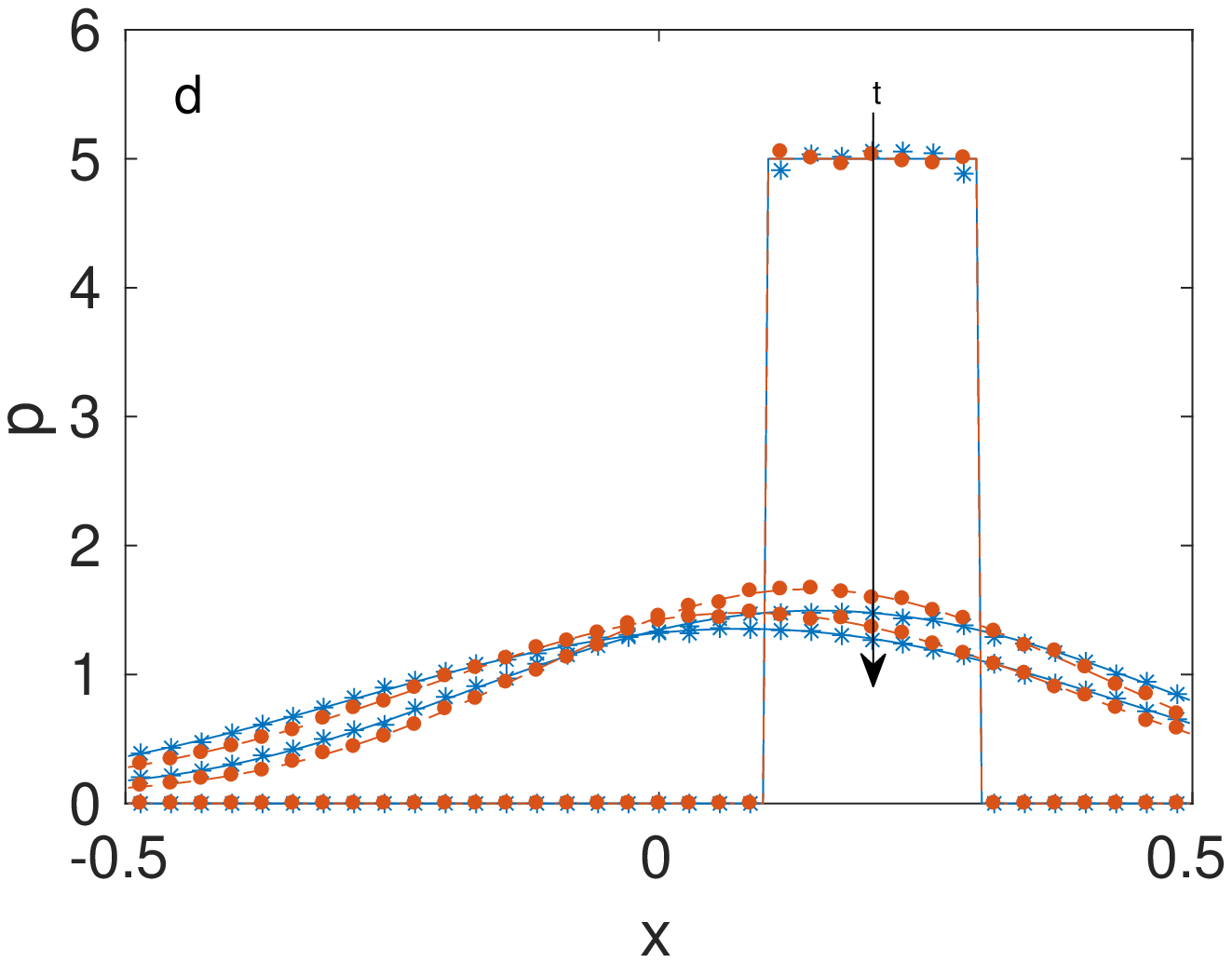}\\ \vspace{3mm}
	\includegraphics[height = .3\textwidth]{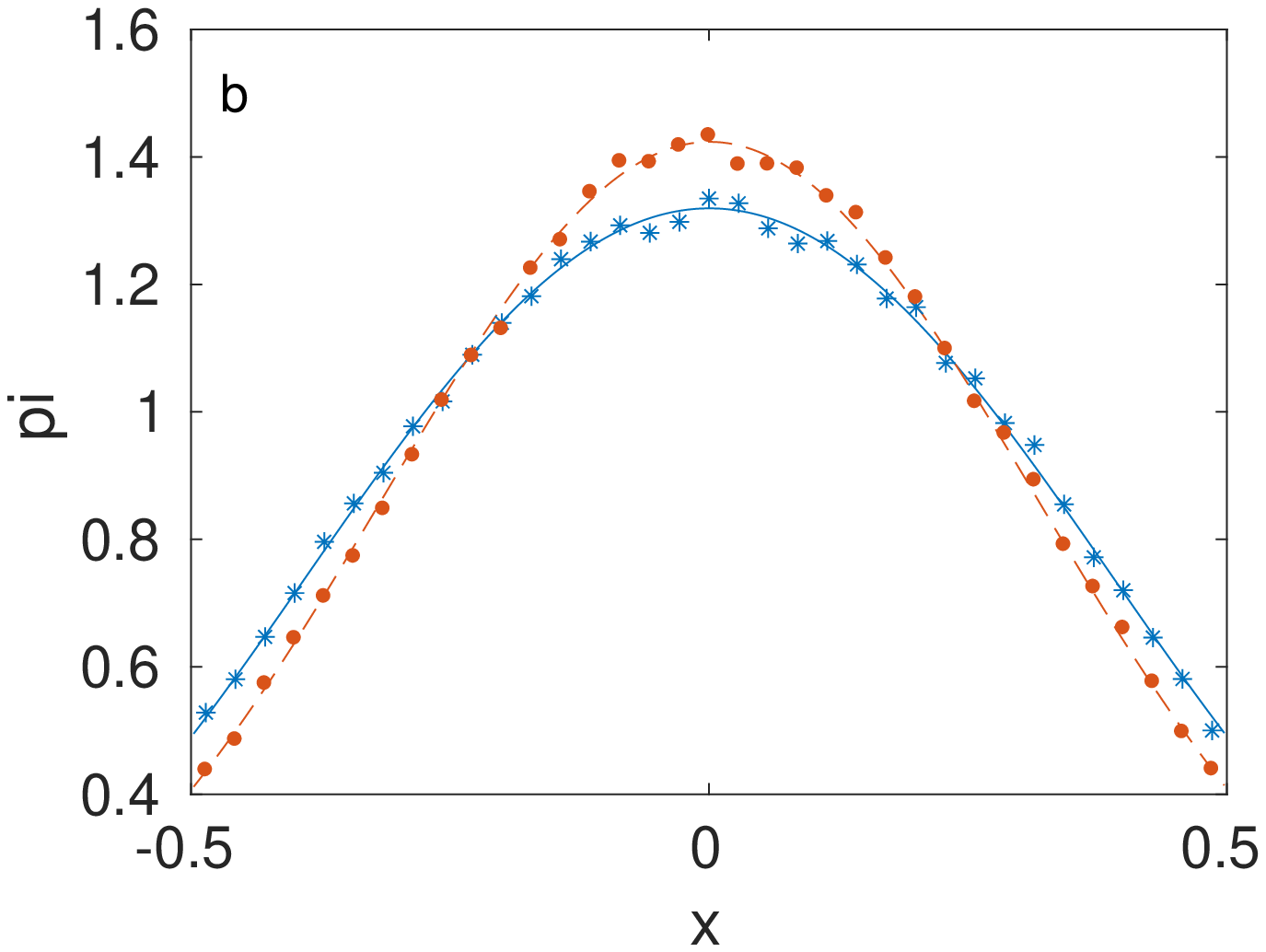} \qquad
	\includegraphics[height = .3\textwidth]{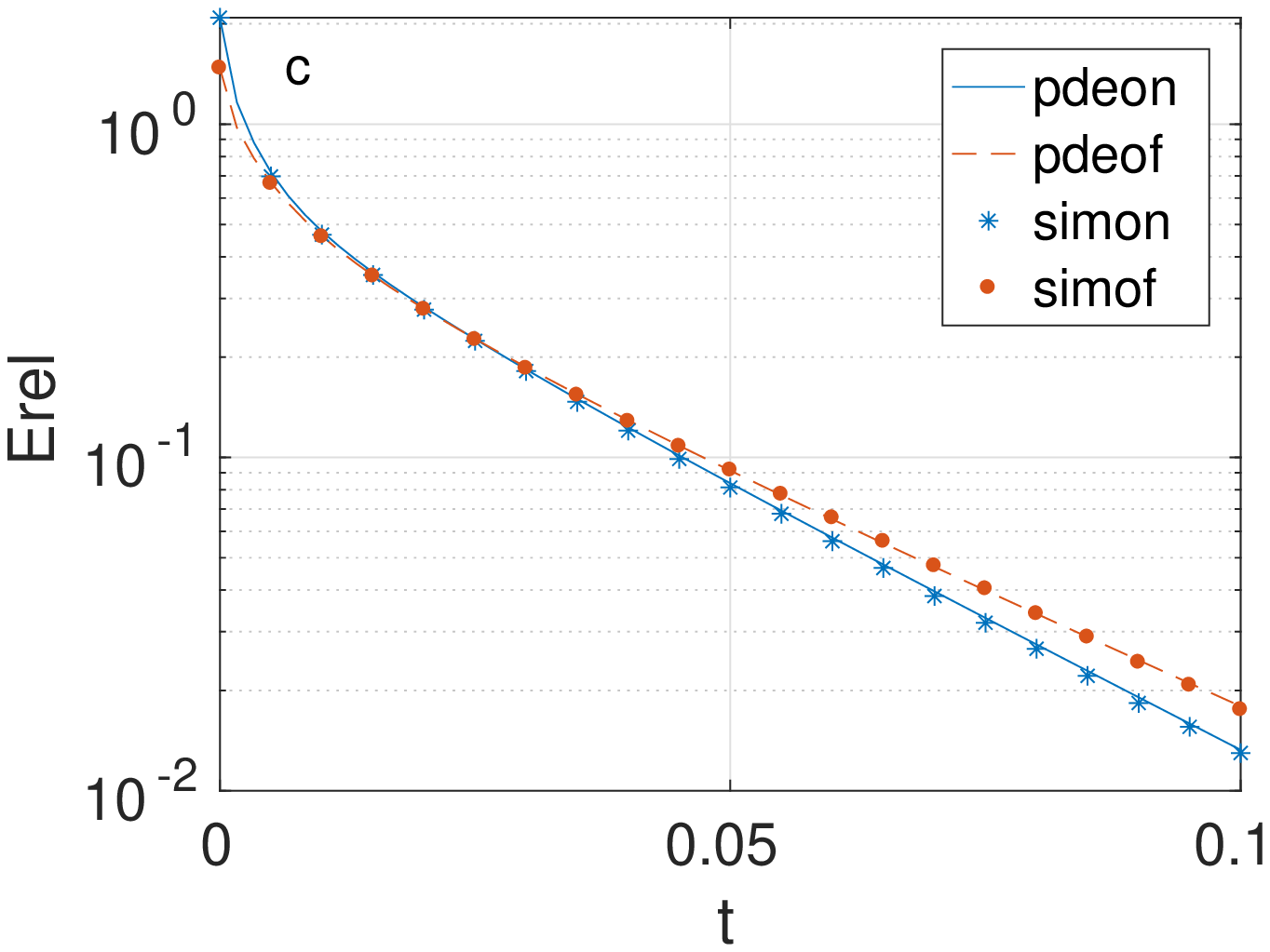} 
\caption{Two-dimensional example with $N=1000$ hard-core particles of diameter $\epsilon = 0.01$ and a quadratic external potential. Comparison between the stochastic simulations of \eqref{hsde} and the solution of the PDE \eqref{macro_eq_num} with and without interactions (corresponding to $\alpha_u = 0$). (a) External potential $V(x) = 5x^2$. (b) Time evolution of the macroscopic density $p_{\epsilon}$ at times $t =0, 0.05, 0.1$. (c) Steady-state $p_{\epsilon,\infty}$. (d) relative entropy  $\Delta E (t) = \Ee( \pe(x,t)) - \Ee( p_{\epsilon,\infty}(x))$. We use $\Delta x = 0.005$ and $\Delta t=10^{-3}$ to solve the PDE and  $R=200$ realizations with $\Delta t= 6.25 \times 10^{-6}$ to generate the histograms.}
\label{fig:case1}
  \end{center}
\end{figure}
\end{example}

\begin{example}[Yukawa interacting particles] In this example, we consider a two dimensional system ($d=2$) with $N=1000$ soft particles with a Yukawa interaction potential, $u(r) =  \exp(-r)/r$ and $\epsilon = 0.01$, and a ``volcano-shaped'' external potential in the horizontal direction (see Figure \ref{fig:case2}(a)). In two dimensions, the Yukawa potential has $\alpha_u = 3.926$ (using \eqref{alphaV}), and for our choice of parameters the coefficient of the nonlinear term in \eqref{macro_eq_num} is $\alpha_u (N-1) \epsilon^2 = 0.392$. We choose initial data to be a sum of two Gaussians along the horizontal direction and constant in the vertical direction so that the evolution of \eqref{macro_eq_num} is purely in the $x$-direction. We observe that the density moves very quickly from the asymmetric initial condition to the centre of the domain, where the minimum of the potential $V$ is (see Figure \ref{fig:case2}(b)). This effect can also be observed in the evolution of the relative entropy, which shows a steep change until around $t = 0.01$ and then relaxes to the long-time convergence (see Figure \ref{fig:case2}(d)). Again we observe a marked difference in the speed of converge to the equilibrium solution between the interacting or point particles simulations, and that the difference in slopes is well-captured by our PDE solutions (see Figure \ref{fig:case2}(d)). 

\def \scc {0.5}
\def \scl {.7}
\begin{figure}
\unitlength=1cm
\begin{center}
\vspace{3mm}
\psfrag{x}[][][\scl]{$x$} \psfrag{V}[][][\scl]{$V(x)$} \psfrag{t}[b][][\scl]{$t$} \psfrag{pi}[r][][\scl]{$ p_{\epsilon,\infty}$} \psfrag{p}[r][][\scl]{$ \pe$} \psfrag{Erel}[][][\scl]{$\Delta E[ \pe]$}
\psfrag{a}[][][\scl]{(a)} \psfrag{b}[][][\scl]{(c)} \psfrag{c}[][][\scl]{(d)} \psfrag{d}[][][\scl]{(b)}
\psfrag{pdeon}[][][\scc]{PDE int.}
\psfrag{pdeof}[][][\scc]{PDE p.}
\psfrag{simon}[][][\scc]{SDE int.}
\psfrag{simof}[][][\scc]{SDE p.}
	\includegraphics[height = .3\textwidth]{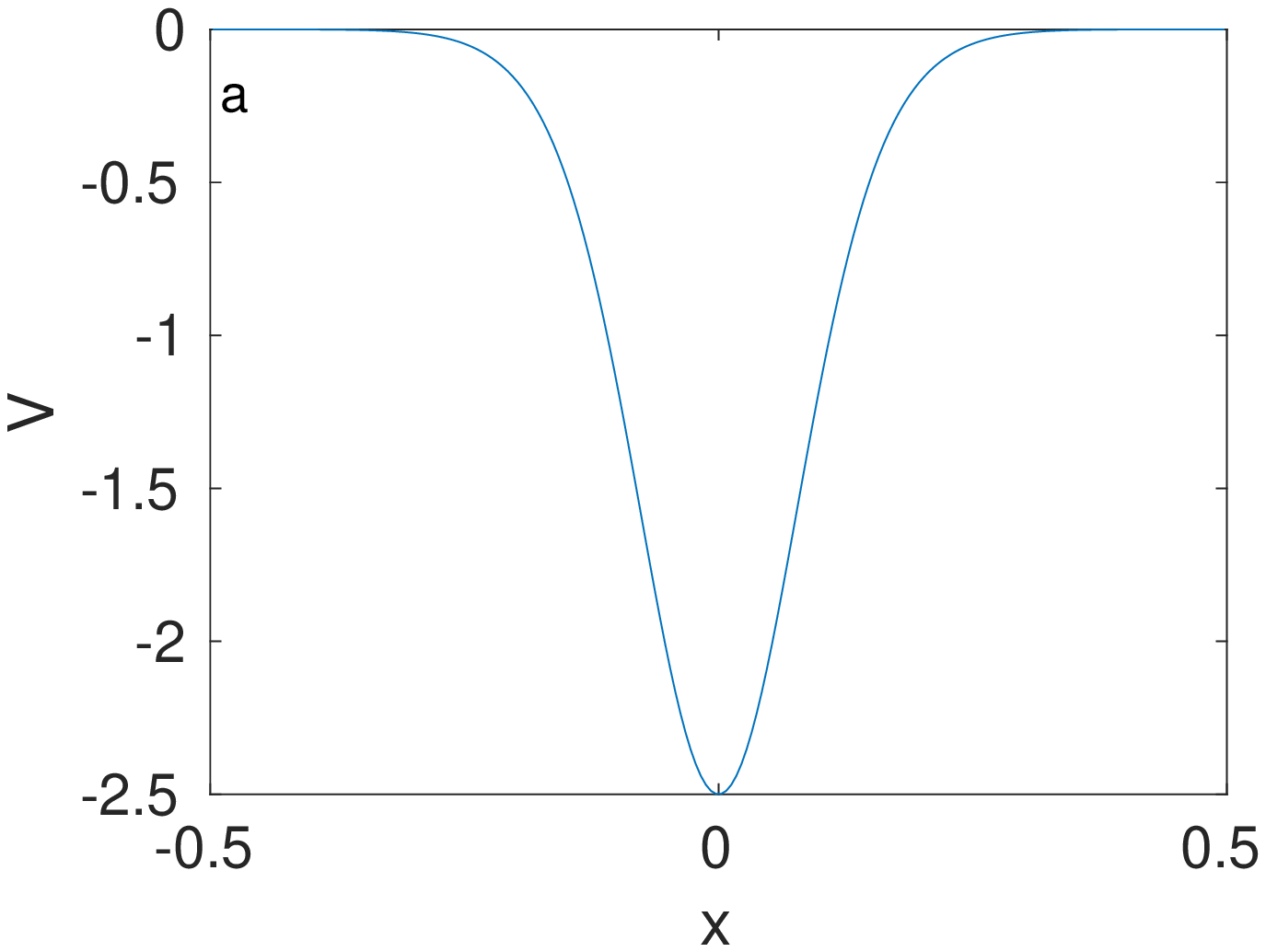} \qquad 
	\includegraphics[height = .3\textwidth]{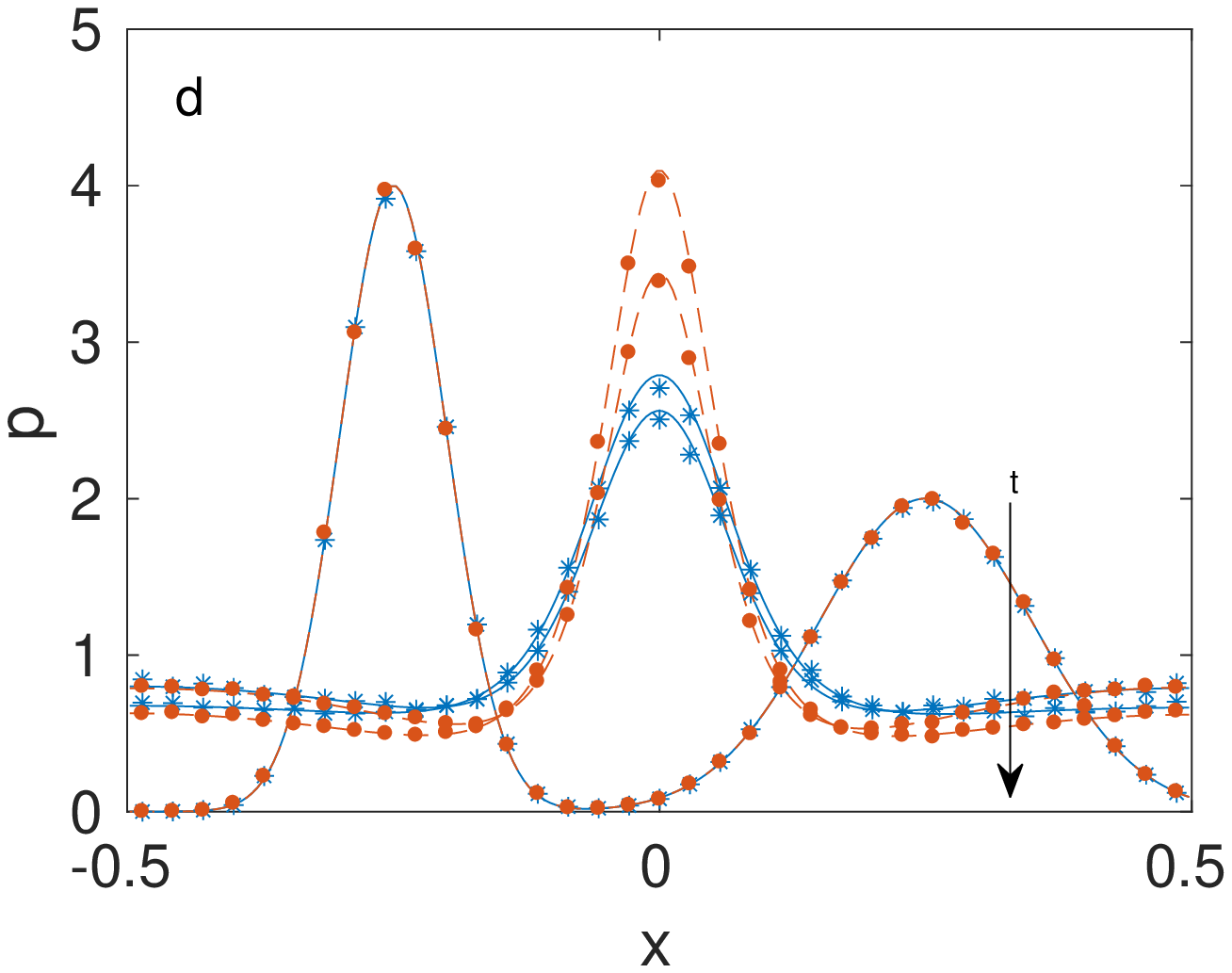}\\ \vspace{3mm}
	\includegraphics[height = .3\textwidth]{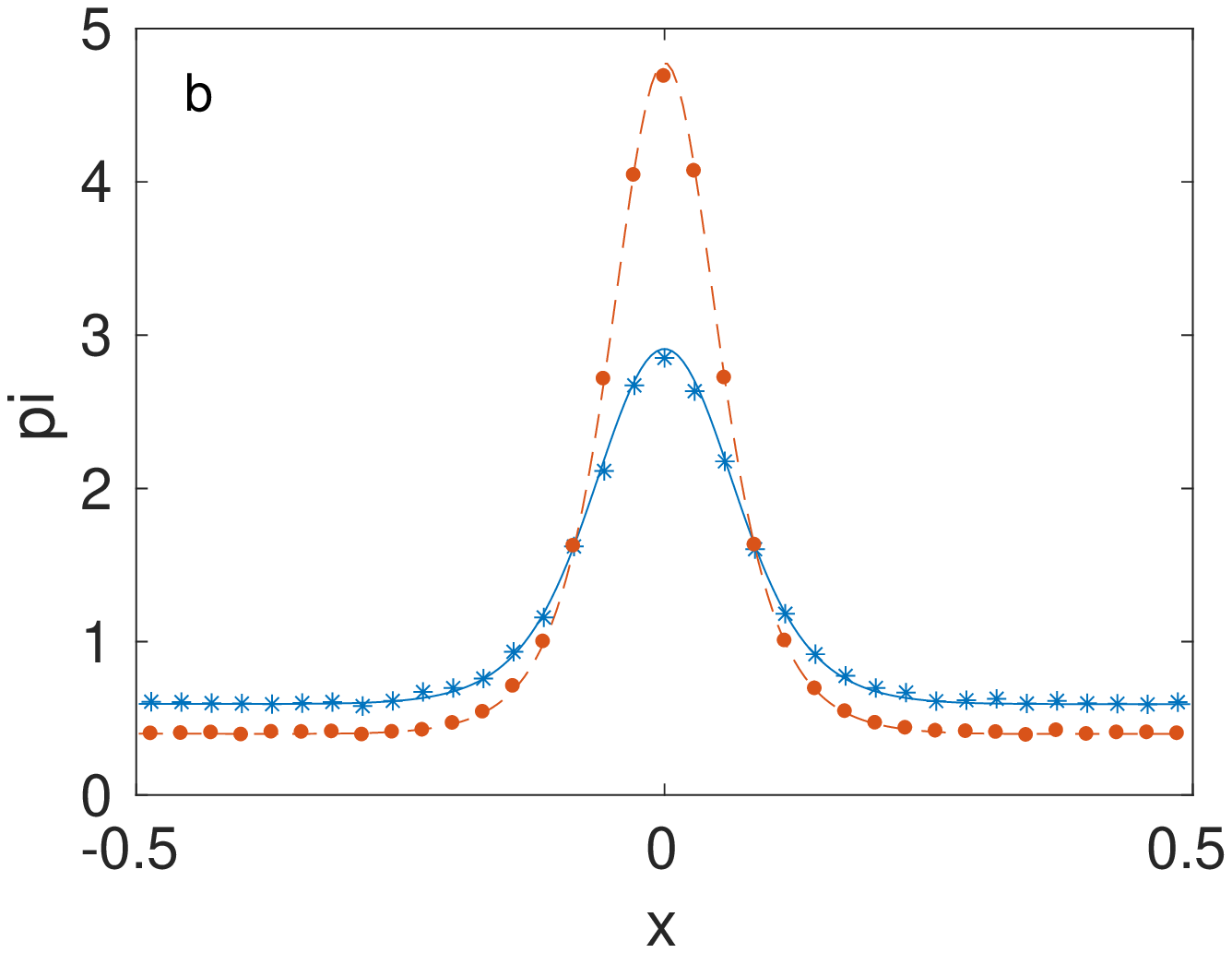} \qquad
	\includegraphics[height = .3\textwidth]{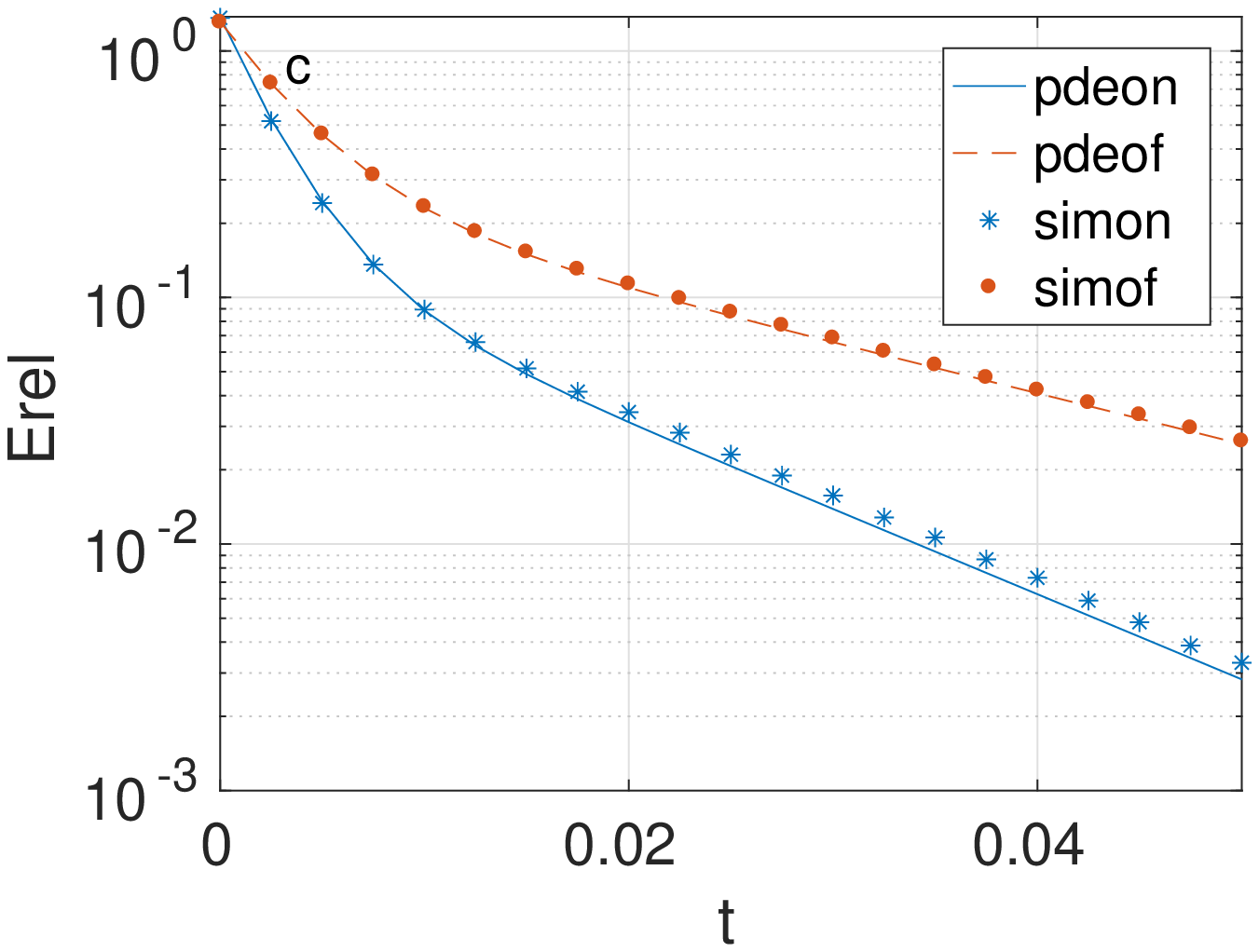} 
\caption{Two-dimensional example with $N=1000$ particles with Yukawa interactions $u(r) =  \exp(-r)/r$, $\epsilon = 0.01$, and a ``volcano-shaped'' external potential. Initial data is a sum of two gaussians, $\pe(0,x) = C [N(-0.25, 0.05^2) + N(0.25, 0.1^2)]$ (with $C$ so that $\pe$ is normalised). (a) External potential $V(x) = -1.5 e^{-x^2/s^2} - e^{-x^2/2s^2}$ (with $s= 0.1$). (b) Time evolution of the solution $\pe$ to \eqref{macro_eq_num} at times $t =0, 0.025, 0.05$. (c) Steady-states $p_{\epsilon,\infty}$ of \eqref{macro_eq_num} with and without interactions (corresponding to $\alpha_u = 0$). (d) Relative entropy  $\Delta E (t) = \Ee( \pe(x,t)) - \Ee( p_{\epsilon,\infty}(x))$.  We use $\Delta x = 0.005$ and $\Delta t=10^{-3}$ to solve the PDE and  $R=200$ realizations with $\Delta t= 2.25 \times 10^{-6}$ ($\Delta t= 6.25 \times 10^{-6}$) to generate the histograms for interacting (point) particles.}
\label{fig:case2}
  \end{center}
\end{figure}

\end{example}

\begin{example}[Power-law interacting particles] In this example, we consider a system with $d=2$, $N=1000$ soft particles with a power-law interaction potential, $u(r) = r^{-4}$ and $\epsilon = 0.01$, and a radially-symmetric ``volcano-shaped'' external potential in the horizontal direction (see Figure \ref{fig:case3}(a)). In two dimensions, this interaction potential has $\alpha_u = 5.568$ (using \eqref{alphaV}), and for our choice of parameters the coefficient of the nonlinear term in \eqref{macro_eq_num} is $\alpha_u (N-1) \epsilon^2 = 0.556$.  This time we choose a radial initial condition whose amplitude depends on the angular variable (see Figure \ref{fig:case3}(b)) so that the evolution is in two dimensions and has two distinct timescales. In particular, we observe a very fast evolution until about $t = 0.01$ by when the mass is almost centred in the middle of the domain (Figure \ref{fig:case3}(c)). After that time the evolution is a lot slower, as seen by the change in slope in relative entropy (Figure \ref{fig:case3}(d)). 
\def \scc {0.5}
\def \scl {.7}
\begin{figure}
\unitlength=1cm
\begin{center}
\psfrag{y}[][][\scl]{$y$} \psfrag{V}[][][\scl]{$V(x)$} \psfrag{t}[][][\scl]{$t$} \psfrag{pi}[r][][\scl]{$ p_{\epsilon,\infty}$} \psfrag{p}[r][][\scl]{$ \pe$} \psfrag{Erel}[][][\scl]{$\Delta E[ \pe]$}
\psfrag{pdeon}[][][\scc]{PDE int.}
\psfrag{pdeof}[][][\scc]{PDE p.}
\psfrag{simon}[][][\scc]{SDE int.}
\psfrag{simof}[][][\scc]{SDE p.}
\psfrag{a}[][][\scl]{(a)} \psfrag{b}[][][\scl]{(b)} \psfrag{c}[][][\scl]{(c)} \psfrag{d}[][][\scl]{(d)}
	\includegraphics[height = .3\textwidth]{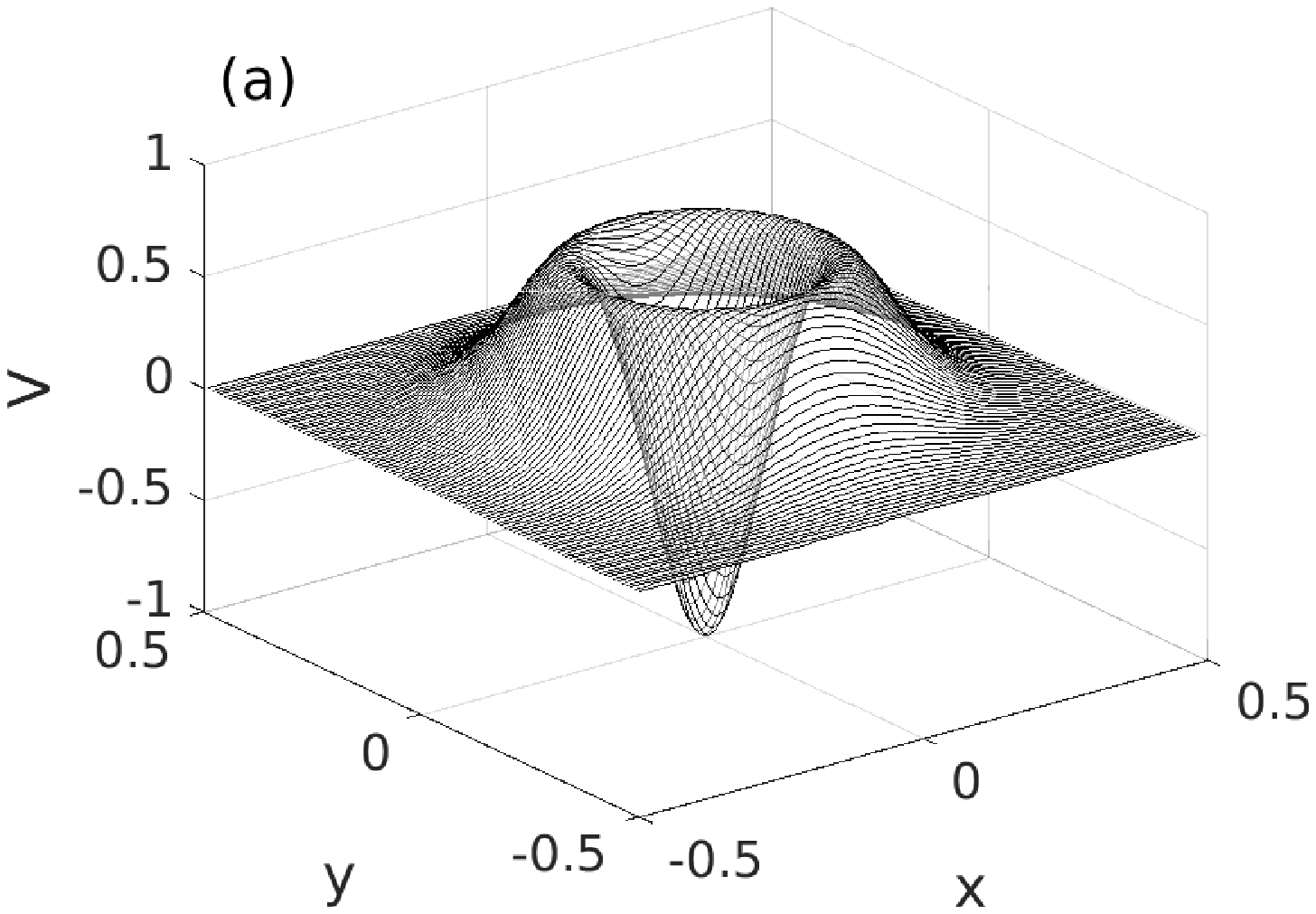} \qquad 
	\includegraphics[height = .3\textwidth]{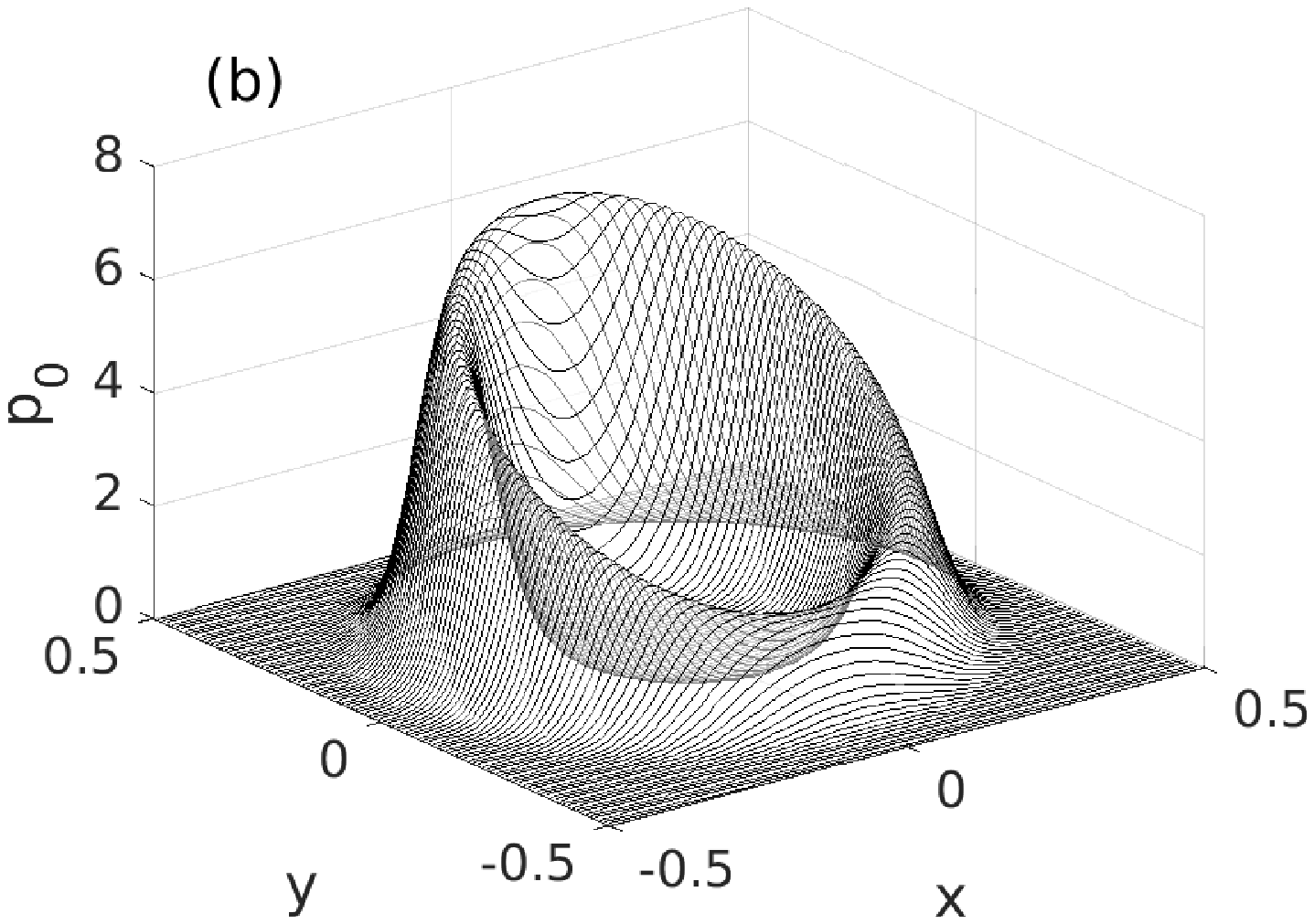} \\ \vspace{3mm}
	\includegraphics[height = .3\textwidth]{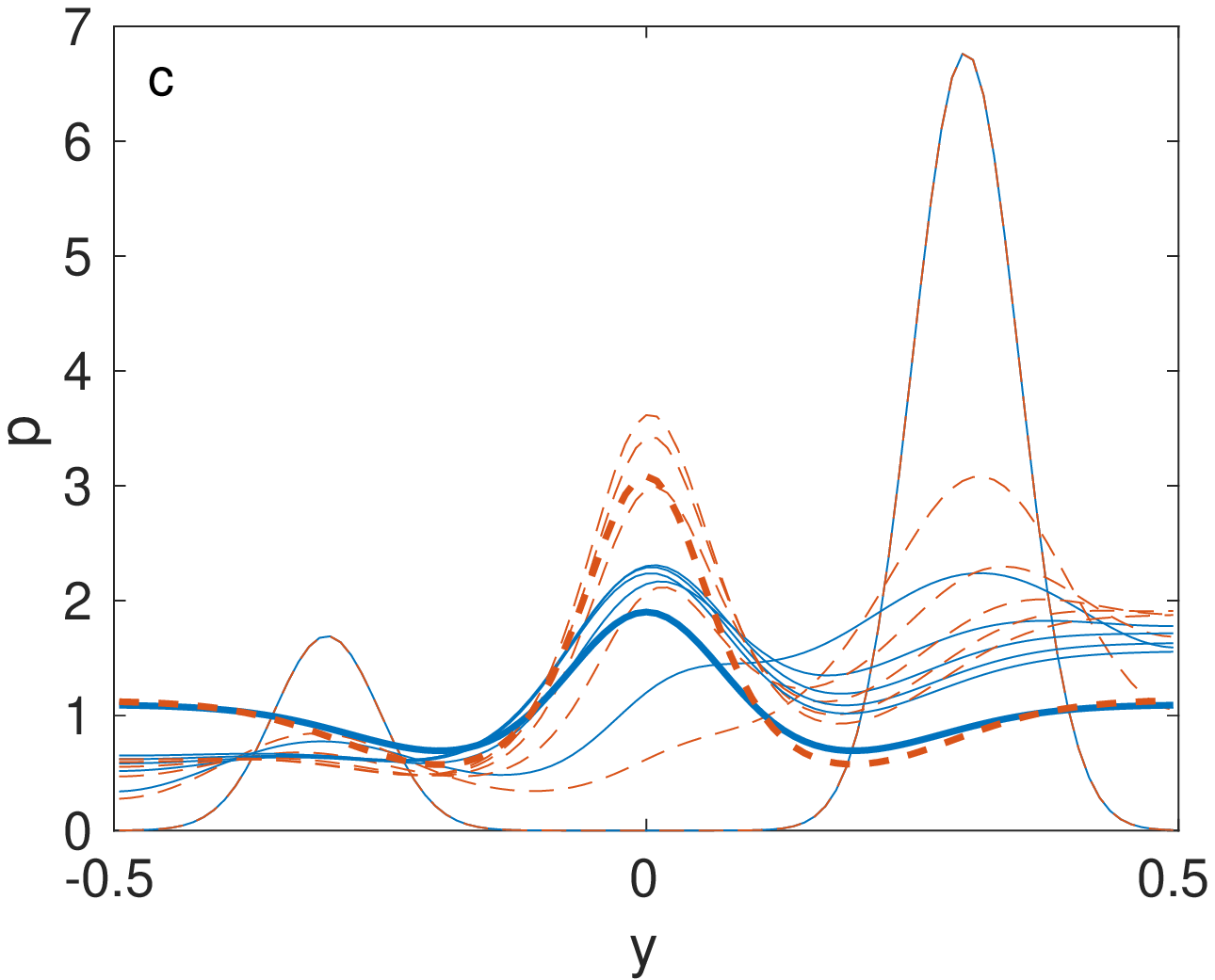}\qquad
	\includegraphics[height = .3\textwidth]{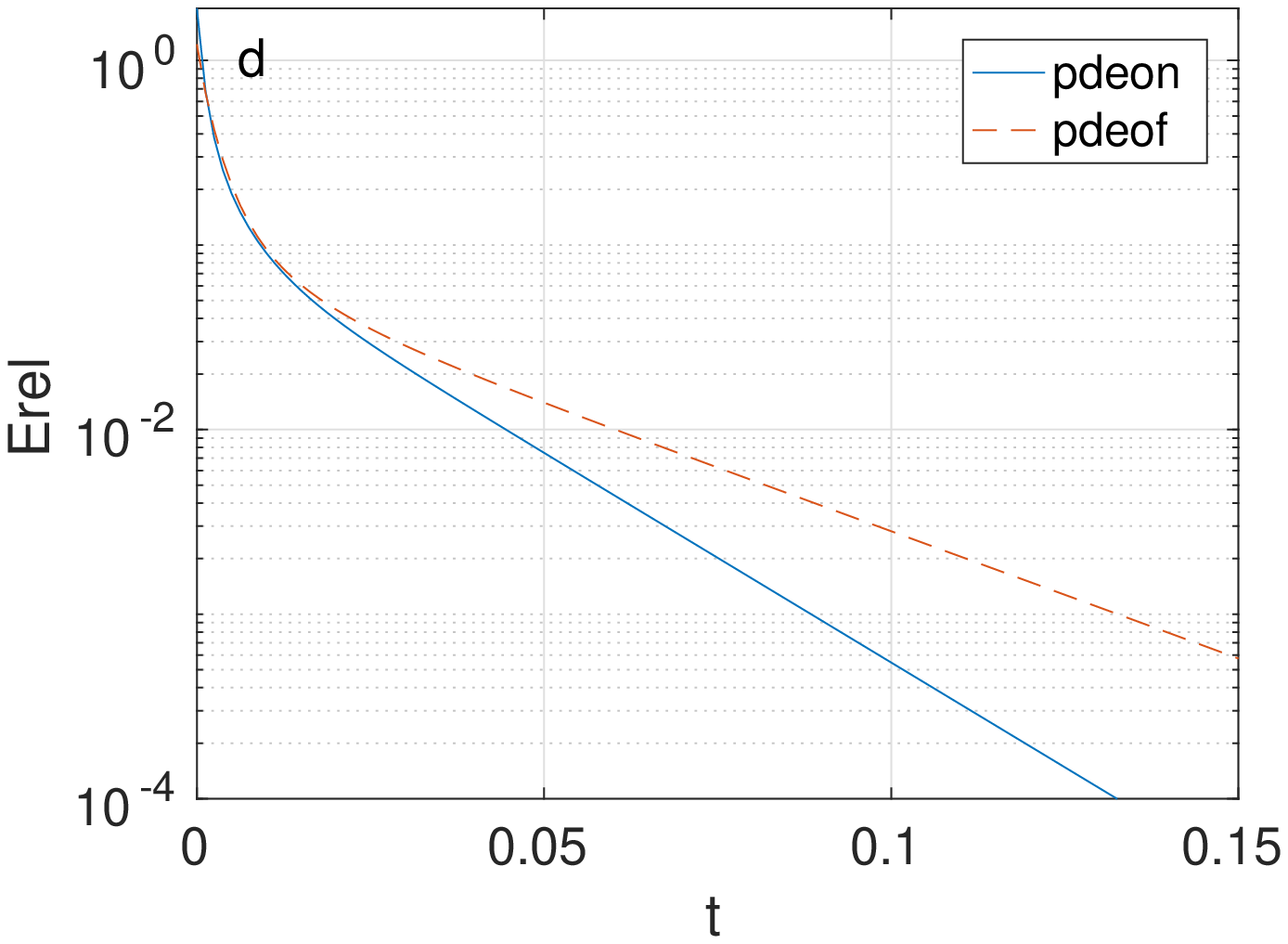} 
\caption{Two-dimensional example with $N=1000$ particles with a power-law interaction potential $u(r) = r^{-4}$, $\epsilon = 0.01$, and a ``volcano-shaped'' external potential. (a) External potential $V(\x) = -4.5 e^{-2s\|\x\|^2} +3.5 e^{-s\|\x\|^2}$ (with $s= 25$). (b) Initial condition $p_0 (r, \theta) = C(1 + 0.6 \sin \theta ) e^{-(r-\mu)^2/2 \sigma^2}$, with $x= r\cos \theta$, $y=r \sin \theta$, $\mu = 0.3$, $\sigma = 0.05$ and $C$ the normalization constant. 
(c) Time evolution of the solution $\pe$ to \eqref{macro_eq_num} along $x=0$. Thin lines correspond to times $t = 0, 0.0025, \dots, 0.0125$; thick lines to the stationary solutions. (d) Relative entropy  $\Delta E (t) = \Ee( \pe(\x,t)) - \Ee( p_{\epsilon,\infty}(\x))$.   
We use $\Delta x = 0.005$ and $\Delta t=10^{-3}$ to solve the PDE and  $R=200$ realizations with $\Delta t= 2.25 \times 10^{-6}$ ($\Delta t= 6.25 \times 10^{-6}$) to generate the histograms for interacting (point) particles.}
\label{fig:case3}
  \end{center}
\end{figure}
\end{example}

\section{Discussion} \label{sec:conclusion}

In this paper we have obtained a framework to derive higher-order expansions of gradient flows for many particle systems, which automatically preserves the gradient flow structure and thus gives a first answer to the questions raised in \cite{Bruna:2016cm}, due to the observation that higher-order expansions only at the level of the PDEs can lead to a loss of the gradient flow structure. It obviously motivates further research, \eg about the cross-diffusion system from \cite{Bruna:2012wu}, where this loss of a gradient structure happens this way \cite{Bruna:2016cm} and we expect to restore the correct gradient structure by our approach.

Let us finally put our asymptotic regime defined in Assumption \ref{assum_regime} in context within the relevant literature on different scaling limits. Oelschl\"ager \cite{Oelschlager:1985kv} considered the model of interacting particles evolving according to \eqref{ssde0} with $V = 0$ in the limit of $N\to \infty$. Bodnar and Velazquez \cite{Bodnar:2005kv} also consider the same model for $d=1$. The interaction is rescaled in the following way as a function of $N$:
$$u(\x) = \frac{1}{N} u_N(\x) = \frac{1}{N} \chi_N^d u_1(\chi_N \x), \quad \chi_N= N^{\beta/d},$$
where $\beta \in [0,1]$ is a parameter that models the strength and range of the interaction. The potential then becomes
\begin{equation} \label{rescaled_u}
	u(\x) = N^{\beta-1} u_1 (\x/l^\beta),
\end{equation}
where we have introduced $l = N^{-1/d}$ (this corresponds to the average distance between uniformly distributed particles in a domain of unit volume). Therefore, the range of the interaction is given by $l^\beta$. In this form, it is easier to extract different cases:
\begin{itemize}
	\item $\beta = 0$: here $u$ scales like $1/N$ and its range is order one. This corresponds to weakly interacting particles (mean field limit), where each particle interacts on average with all the other $N$ particles and the potential is long range. In the limit of $N\to \infty$ one obtains an integro-differential equation.  
	
	\item $\beta \in (0,\beta^*)$: in this case each particle interacts with an order $N^{1-\beta}$ neighbours but each interaction is stronger (of order $N^{\beta-1}$). This limit is termed ``moderately interacting particles'' in \cite{Oelschlager:1985kv}. In contrast to the mean-field case, as $N\to \infty$ the interactions get more and more local and the limit dynamics satisfy a nonlinear diffusion equation of the form \eqref{fps_one_eq} but with a nonlinear coefficient of the form $\int u_1(\x) \ud \x$ (see (28) in \cite{Bodnar:2005kv} or  (15) in \cite{Bruna:2017vr}). In \cite{Oelschlager:1985kv} they have $\beta^* = d/(d+2)$; in \cite{Bodnar:2005kv}, assuming that particles are near the equilibrium (distributed according to the Gibbs measure), they manage to reach $\beta^* = 1$ ($d=1$ only).
	\item $\beta \ge \beta^*$: in contrast to the other two cases, the random fluctuations in the interaction term $\sum_{j>i}^N u( \x_i - \x_j  )$ does not vanish as $N\to \infty$. This corresponds to strongly interacting particles or the hydrodynamic limit. In the case of $\beta = 1$, the strength of $u$ is independent of the number of particles, but the range is short so that on average only interact with on neighbour.
\end{itemize}

What the three limits above have in common is the order of the total excluded volume $\eta$. This is given by $N$ times the volume of the range of the potential, multiplied by the strength of the repulsion (this can be thought of as $\alpha_u$). 
For an interaction potential of the form \eqref{rescaled_u}, we have $\eta = N N^{\beta-1}(l ^{\beta})^d = N^\beta (N^{-1/d})^{\beta d} = 1$, independent of $\beta$.
In contrast to \cite{Bodnar:2005kv, Oelschlager:1985kv}, here we do not take $N \to \infty$, the volume fraction tends to a non-zero small constant, in which we do the asymptotic expansion. In particular, the strength of the potential is order one, its range is $\epsilon$, and the total excluded volume is $N \epsilon^d = \eta$. The assumption $\eta \ll 1$ implies that a particle at a given time only interacts with another particle on average, and that we can use the $N=2$ case to obtain the leading-order correction term. This means that in our work we treat a regime in between the moderately and strongly interacting particles, in that we arrive at a nonlinear diffusion equation for the population density as for $\beta < \beta^*$ but the strength of the interaction is order one.  

Finally, another interesting issue is to study the $N$-particle problem by finding the BBGKY hierarchy equivalent for the $(P, \Phi)$ problem in the sense of \cite{Burger:2017ob} and to provide a justification of truncation in the limit of low volume fraction in this sense alternative to the one provided in Appendix A and Subsection 4.3. 

\begin{acknowledgements}
M. Bruna was partially supported by Royal Society University Research Fellowship (grant number URF/R1/180040).
JAC was partially supported by EPSRC grant number EP/P031587/1 and the Advanced Grant Nonlocal-CPD (Nonlocal PDEs for Complex Particle Dynamics: Phase Transitions, Patterns and Synchronization) of the European Research Council Executive Agency (ERC) under the European Union's Horizon 2020 research and innovation programme (grant agreement No. 883363). 
\end{acknowledgements}

\appendix

\section{The $N$-particle problem} \label{app:Ngeneral}

In the following we briefly discuss the asymptotics in the case of an arbitrary number of $N$ particles (in the soft sphere case), where we have to consider all cases of $k$ particles at distances of order $\epsilon$. We only sketch the arguments, since the detailed computations actually follow closely the case $N=2$.

We fix $\x_1$ and consecutively go through the regions of $k-1$ particles being at distance order $\epsilon$ to $\x_1$; due to indistinguishability we can consider $\x_i$, $i=2,\dots,k$.
The inner region for $k$ particles is determined by a change of variables
\begin{equation*}
	\tilde \x_1 = \x_1, \qquad \tilde \x_i = \frac{\x_i-\x_1}{\epsilon}, \ i=2,\dots,k, \qquad
 \tilde \x_i = \x_i, \ i=k+1,\ldots,N.
\end{equation*}
Similar to \eqref{inner_0s}, the leading order in the Hamilton--Jacobi equation yields $\nabla_{\tilde \x_i} \tilde \Phi^{(0)} = 0$ for $j = 2, \dots, k$, that is, $\tilde \Phi^0$ is independent of the relative distances $\tilde \x_i$ of particles in the inner region. At the next order, similar to \eqref{inner_1-s_eq} 
$$ 0 = \sum_{i=2}^{k} \nabla_{\tilde \x_i} \left[ \tilde P^{(0)} \nabla_{\tilde \x_i} \tilde \Phi^{(1)} -  \tilde P^{(0)} \nabla_{\tilde \x_1} \tilde \varphi \right] $$

The outer region for $k$ particles is determined by one of the $\|\tilde x_i\|$ tending  to infinity, that is, only $k-2$ particles at close distance. Matching the previously obtained solutions for these problems with the inner region is consistent with
\begin{alignat*}{2}
	\tilde P^{(0)} &= e^{-\sum_{i=2}^k  u(\tilde \x_i)} q(\tilde \x_1,s)^k \prod_{j=k+1}^N  q(\tilde \x_j,s), &\qquad \tilde P^{(1)} &= \frac{\tilde P^{(0)}}{ q(\tilde \x_1,s)} \sum_{i=2}^k \tilde \x_i \cdot \nabla_{\tilde \x_1} q(\tilde \x_1,s),\\
	\tilde \Phi^{(0)} &= k \varphi(\tilde \x_1,s)   +  \sum_{j=k+1}^N   \varphi(\tilde \x_j,s), 
& \tilde \Phi^{(1)} &= \sum_{i=2}^k \tilde \x_i \cdot \nabla_{\tilde \x_1} \varphi(\tilde \x_1,s),
\end{alignat*}
again with $\varphi$ and $q$ solving \eqref{micros_lead}. 

Thus, we see that the two leading orders in $\Phi$ depend on particle pairs only, consistent with the leading orders in the two-particle problem.  Hence, when going to the integrated equations the leading order come from the integrals of 
$$\tilde P^{(0)} \nabla_{\x} \tilde \Phi^{(0)} = e^{-\sum_{i=2}^k  u(\tilde \x_i)} q(\tilde \x_1,s)^k \prod_{j=k+1}^N  q(\tilde \x_j,s)\nabla ( k \varphi(\tilde \x_1,s)   +  \sum_{j=k+1}^N   \varphi(\tilde \x_j,s)) . $$ 
Since the integral of these leading-order terms (related to the effective volume in configuration space used for the inner expansions) obtained in the case of $k \geq 3$ particles gives a correction  of negligible order $\epsilon^{2d} N^2$,  the additional terms we obtain in the integration of \eqref{integ1_N} are of higher order than $\epsilon^{2d} N^2$ and can thus be neglected.

\section{Derivation for hard spheres} \label{sec:hs_appendix}

\subsection{Matched asymptotic expansions}

We proceed to solve \eqref{micro_opti_hs} using matched asymptotic expansions. 

In the outer region we obtain the same solution as for soft spheres, as expected since we are outside the interaction region:
\begin{equation}
\label{outerhs_sol}
\begin{aligned}
\Pout ( s , \x_1, \x_2) &= q( s , \x_1) q(  s , \x_2),\\
\Phi_\text{out} ( s , \x_1, \x_2) &= \varphi ( s , \x_1) +  \varphi (  s , \x_2),
\end{aligned}
\end{equation}
where $q$ and $\varphi$ satisfy \eqref{micros_lead}. This is valid up to $O (\epsilon^d)$ by the same argument as in the soft spheres case and the remark that the initial density is chosen separable up to that order (as discussed in Subsection \ref{sec:hs}). 

The inner problem reads
\begin{subequations}
\label{micro_innerb}
\begin{align}
\label{micro_innerb_eq}
0& = \epsilon^2 \frac{\partial \tilde P}{\partial s} +   \nabla_{\tilde \x_1} \cdot (\epsilon^2\tilde P \nabla_{\tilde \x_1} \tilde  \Phi - \epsilon \tilde P \nabla_{\tilde \x} \tilde  \Phi )  +   \nabla_{\tilde \x} \cdot (2 \tilde P \nabla_{\tilde \x} \tilde  \Phi   - \epsilon \tilde P \nabla_{\tilde \x_1} \tilde  \Phi ) ,   \\
\label{micro_flow}
0& = \epsilon^2 \frac{\partial \tilde \Phi}{\partial s} + \frac{\epsilon^2 }{2} \| \nabla_{\tilde \x_1 } \tilde \Phi \| ^2 -  \epsilon   \nabla_{\tilde \x_1 } \tilde \Phi \cdot \nabla_{\tilde \x } \tilde \Phi + \| \nabla_{\tilde \x } \tilde \Phi \| ^2,
\\
\label{initP}
\tilde P(s=0) &= \tilde P_{k-1}( \tilde \x_1, \tilde \x), \\
\label{finalphi}
\tilde \Phi (s=\Delta t) &=  - \left[ \log \tilde P_k( \tilde  \x_1, \tilde \x) + V(\tilde \x_1) + V(\tilde \x_1 + \epsilon \tilde \x) \right], 
\end{align}
together with the boundary condition when two particles are in contact, 
\begin{equation}
	\label{bcinner}
2 \tilde P  \, \tilde \x \cdot \nabla_{\tilde \x} \tilde  \Phi    = \epsilon  \tilde P \,  \tilde \x \cdot \nabla_{\tilde \x_1} \tilde  \Phi     , \qquad \text{on} \qquad \| \tilde \x \| = 1.
\end{equation}
and the matching condition with the outer solution \eqref{outerhs_sol}, with coincides with the soft-sphere condition \eqref{P_match}-\eqref{Phi_match}.
\end{subequations}

Expanding $\tilde P$ and $\tilde \Phi$ in powers of $\epsilon$, $\tilde P \sim \tilde P^{(0)} + \epsilon \tilde P^{(1)} + \cdots$  and $\tilde 
\Phi \sim \tilde \Phi ^{(0)} + \epsilon \tilde \Phi ^{(1)} + \cdots$, the leading order of \eqref{micro_innerb} gives
\begin{subequations}
\label{inner_0b}
\begin{alignat}{2}
\label{inner_0b_1}
 2\nabla_{\tilde \x} \cdot \left (\tilde P^{(0)} \nabla_{\tilde \x} \tilde  \Phi ^{(0)} \right )  &= 0,   &\\ 
 \label{inner_0b_2}
  \left \| \nabla_{\tilde \x } \tilde \Phi ^{(0)}  \right \| ^2  &=0,\\
    \label{inner_0b_3}
\tilde \Phi ^{(0)}(s = \Delta t) &=  - \left[ \log \tilde P^{(0)}_k( \tilde  \x_1, \tilde \x) + 2V(\tilde \x_1) \right], & \\
    \label{inner_0b_6}
2 \tilde P^{(0)} \, \tilde \x \cdot \nabla_{\tilde \x} \tilde  \Phi    &=0 , & \qquad \text{on }  \| \tilde \x \| = 1,\\
\label{inner_0b_4}
\tilde P^{(0)} &\sim q^2(\tilde \x_1,s), & \text{as }  \| \tilde \x \| \to \infty,  \\
\label{inner_0b_5}
\tilde \Phi^{(0)} &\sim 2 \varphi(\tilde \x_1,s), & \text{as }  \| \tilde \x \| \to \infty.
\end{alignat}
\end{subequations}
We find that the behaviour at infinity satisfies in fact all the other constraints. Thus, the leading order is independent of $\tilde s$ and $\tilde \x$:
\begin{equation}
\label{inner-o1b}
\tilde P ^{(0)} = q^2(\tilde \x_1, s), \qquad  \tilde \Phi^{(0)} = 2 \varphi(\tilde \x_1, s).
\end{equation}
The $O(\epsilon)$ of \eqref{micro_innerb} is, using \eqref{inner-o1b},
\begin{subequations}
\label{inner_1b}
\begin{alignat}{2}
\label{inner_1b_eq}
0 &=    2 q^2(\tilde \x_1)  \nabla^2_{\tilde \x}  \tilde  \Phi ^{(1)}, &  \\
\label{final_o1b} 
\tilde \Phi ^{(1)}(s = \Delta t) &=  -    \frac{ \tilde P^{(1)}_k (\tilde \x_1, \tilde \x) }{q_k^2(\tilde \x_1)} -
\tilde \x \cdot \nabla_{\tilde \x_1} V(\tilde \x_1), &\\
\label{inner_1b_bc}
 \tilde \x \cdot \nabla_{\tilde \x} \tilde  \Phi ^{(1)}   &=   \tilde \x \cdot \nabla_{\tilde \x_1} \varphi (\tilde \x_1), &\qquad \text{on }  \| \tilde \x \| = 1,\\
\label{outer_P01b}
\tilde P ^{(1)} &\sim   q(\tilde \x_1)  \tilde \x \cdot \nabla_{\tilde \x_1} q(\tilde \x_1), & \text{as } \| \tilde \x \| \to \infty, \\
\label{outer_phi01}
\tilde \Phi ^{(1)} &\sim  \tilde \x \cdot \nabla_{ \tilde \x_1} \varphi ( \tilde { \bf x}_1), & \text{as } \| \tilde \x \| \to \infty.
\end{alignat}
\end{subequations}
From this we see that $\tilde \Phi ^{(1)} = \tilde \x \cdot \nabla_{ \tilde \x_1} \varphi ( \tilde { \bf x}_1) $ satisfies \eqref{inner_1b_eq}, \eqref{inner_1b_bc} and \eqref{outer_phi01}. Now imposing \eqref{final_o1b} and recalling that $\varphi(\Delta t, \x)  = -\log q_k - V(\x)$ gives
\begin{equation*}
 \tilde P^{(1)}_k (\tilde \x_1, \tilde \x)  =  q_{k}(\tilde \x_1) \tilde \x \cdot \nabla_{\tilde \x_1} q_{k}(\tilde \x_1).
\end{equation*}
Since this satisfies the matching condition \eqref{outer_P01b} at $\Delta t$, and as in the soft-spheres case, the inner problem is stationary up to order $\epsilon$, we can write
\begin{equation*}
 \tilde P^{(1)} (\tilde \x_1, \tilde \x, s)  =  q(\tilde \x_1, s) \tilde \x \cdot \nabla_{\tilde \x_1} q(s,  \tilde \x_1),
\end{equation*}
plus any additional function that vanishes at $s = \Delta t$ and as $\tilde \x \sim \infty$ (but that we can ignore since it does not affect the final integrated result). 
In summary, the solution in the inner region is, to $O(\epsilon^d)$
\begin{subequations}
\label{innersol_hs}
\begin{align}
\label{Pinnersol_hs}
\tilde P (\tilde \x_1, \tilde \x, s) &=q^2(\tilde \x_1, s) + \epsilon  q(\tilde \x_1, s)  \tilde \x \cdot \nabla_{\tilde \x_1} q(\tilde \x_1, s) ,\\
\label{Phiinnersol_hs}
\tilde \Phi (\tilde \x_1, \tilde \x, s)  &= 2 \varphi(\tilde \x_1, s) + \epsilon \tilde \x \cdot \nabla_{ \tilde \x_1} \varphi ( \tilde { \bf x}_1).
\end{align}
\end{subequations}

\subsection{Integrated equations}

The procedure is analogous to the soft spheres case, except that now the domain of integration for $\x_2$ depends on the position of the first particle $\x_1$. This will result in some surface integrals. 
Fixing the first particle at $\x_1$, we integrate \eqref{micro_opti_hs} over the region available to the second particle, namely $\Omega_\epsilon(\x_1) := \Omega \setminus B_\epsilon (\x_1)$. We ignore any intersections that the ball $B_\epsilon (\x_1)$ may have with $\partial \Omega$ (for some positions $\x_1$ close to the boundaries), since this gives a higher-order correction. We find
\begin{equation}
\label{integfz1}
\frac{\partial p}{\partial s}  + \int_{\Omega(\x_1)}  \nabla_{\x_1} \cdot \left( P  \nabla_{\x_1} \Phi \right) \, \ud \x_2 + \int_{\partial B_\epsilon(\x_1)}   P  \nabla_{\x_2} \Phi  \cdot {\bf n}_2 \, \ud S_{\x_2}  = 0,
\end{equation}
where
\[
p(\x_1, s) = \int_{\Omega_\epsilon(\x_1)} P(\x_1, \x_2, s) \, \ud \x_2.
\]
Here $\ud S_{\x_2}$ denotes the surface element with respect to variables $\x_2$. In the last term of \eqref{integfz1} we have used the divergence theorem and the no-flux boundary condition \eqref{optim4_hs}. Using the Reynolds transport theorem in the second term of \eqref{integfz1} and the  no-flux boundary condition \eqref{bcinb} gives
\begin{equation*}
\frac{\partial p}{\partial s}  + \nabla_{\x_1} \cdot \int_{\Omega(\x_1)}  P \nabla_{\x_1} \Phi  \, \ud \x_2  = 0.
\end{equation*}

As in the soft-particles case, we introduce a macroscopic mobility $m$ and a macroscopic flow $\phi$ such that 
\begin{equation*}
m \nabla_{\x_1} \phi = \int_{\Omega(\x_1)}  P \nabla_{\x_1} \Phi  \, \ud \x_2 =: \mathcal I (\x_1, s). 
\end{equation*}
We again compute $\mathcal I$ breaking it into inner and outer regions:
\begin{align*}
\mathcal I (\x_1, s)  = \int_{\Omega_\text{out}({\x}_1)}  P \nabla_{\x_1} \Phi  \, \ud \x_2 + \int_{\Omega_\text{in}({\x}_1)}  P \nabla_{\x_1} \Phi  \, \ud \x_2. 
\end{align*}
The outer part is, using the outer expansion \eqref{outerhs_sol}
\begin{align*}
\int_{\Omega_\text{out}({\x}_1)}  P \nabla_{\x_1} \Phi  \, \ud \x_2 = q(\x_1) \nabla_{\x_1} \varphi(\x_1) \left [ \int_\Omega q(\x_2)  \, \ud \x_2  - q(\x_1) \delta^dV_d(1)  + O(\delta^{d+1}) \right],
	\end{align*}
where $V_d(1) $ denotes the volume of the unit ball in $\mathbb R^d$. The inner region integral, using the inner solution \eqref{innersol_hs}, becomes
\begin{align*}
	\int_{\Omega_\text{in}({\x}_1)}  P \nabla_{\x_1} \Phi  \, \ud \x_2 = (\delta^d - \epsilon^d) V_d(1) q^2(\tilde \x_1) \nabla_{ \x_1}  \varphi( \x_1) + O(\epsilon^{d+1}).
\end{align*}
Combining the two integrals we obtain
\begin{align*}
	\mathcal I &\sim q(\x_1) \nabla_{\x_1} \varphi(\x_1) \left [ \int_\Omega q(\x_2)  \, \ud \x_2  - \alpha_u q(\x_1)  \epsilon^d   \right],
\end{align*}
using that $\alpha_u = V_d(1)$ for hard spheres. Similarly, we can use Lemma \ref{lem:rel_pq} to find $ p  \sim q(\x_1)  \left [ \int_\Omega q(\x_2)  \, \ud \x_2  - \alpha_u\epsilon^d q(\x_1)  \right]$, which implies that $m = p$ and $\phi = \varphi(q)$ up to $O(\epsilon^{d+1})$ as expected.


\label{lastpage}
\end{document}